\definecolor{green}{RGB}{89,169,58}
\definecolor{red}{RGB}{224,61,42}
\definecolor{blue}{RGB}{63,78,181}
\newcommand{\N}{\ensuremath{\mathbb{N}}}
\newcommand{\R}{\ensuremath{\mathbb{R}}}
\newcommand{\E}{\ensuremath{\mathbb{E}}}
\renewcommand{\P}{\ensuremath{\mathbb{P}}}
\newcommand{\ind}[1]{\ensuremath{\mathbbm{1}_{\left\{#1\right\}}}}
\newcommand{\diff}{\mathop{}\mathopen{}\mathrm{d}}
\newcommand{\cal}[1]{\ensuremath{\mathcal{#1}}}
\newcommand\croc[1]{\left\langle #1\right\rangle}
\newcommand\steq[1]{\stackrel{\text{\rm #1.}}{=}}
\def\eps{\varepsilon}
\def\cadlag{c\`adl\`ag }
\renewcommand{\theenumi}{\alph{enumi}}
\newtheorem{proposition}{Proposition}
\newtheorem{definition}[proposition]{Definition}
\newtheorem{lemma}[proposition]{Lemma}
\newtheorem{theorem}[proposition]{Theorem}
\title[Scaling Methods for Stochastic CRN]{Scaling Methods for Stochastic Chemical Reaction Networks}
\date{\today}
\author[L. Laurence]{Lucie Laurence}
\email{Lucie.Laurence@unibe.ch}
\address[L. Laurence]{Institute of Mathematical Statistics and Actuarial Science, Department of Mathematics and Statistics, University of Bern, Alpeneggstrasse 22, 3012 Bern, Switzerland}
 \address[Ph. Robert]{INRIA Paris, 48, rue Barrault, CS 61534, 75647 Paris Cedex, France}
\author[Ph. Robert]{Philippe Robert}
\email{Philippe.Robert@inria.fr}
\urladdr{http://www-rocq.inria.fr/who/Philippe.Robert}
\begin{document}
\begin{abstract}
The  asymptotic properties of some Markov processes associated to  stochastic chemical reaction networks (CRNs) driven by  the kinetics of the law of mass action are analyzed. The scaling regime introduced in the paper assumes that the norm of the initial state is converging to infinity. The reaction rate constants are kept fixed.  The purpose of the paper is of showing, with simple examples, a scaling analysis in this context. The main difference with the scalings of the  literature is that it does not change  the graph structure of the CRN or its reaction rates. Several CRNs are investigated to illustrate the insight that can be gained on the qualitative properties of these networks. A detailed scaling analysis of a CRN with several interesting asymptotic properties, with a bi-modal behavior in particular,  is worked out in the last section. Additionally, with several examples, we also show that  a stability criterion due to  Filonov  for positive recurrence of Markov processes may simplify significantly the stability analysis of these networks. 
\end{abstract}
\maketitle

 \vspace{-5mm}

\bigskip

\hrule

\vspace{-3mm}

\tableofcontents

\vspace{-1cm}

\hrule

\bigskip

\section{Introduction}
This paper investigates the asymptotic properties of Markov processes $(X(t))$ associated to chemical reaction networks (CRNs).  The state space of these processes is a subset of $\N^n$, where  $n{\ge}1$ is the number of {\em chemical species}. A {\em chemical reaction} $y^-{\rightharpoonup} y^+$, for $y^-$, $y^+{\in}{\cal C}{\subset}\N^n$, where ${\cal C}$ is the set of {\em complexes},  is associated to a transition of  a Markov process on $\N^n$ of the form, for $x{=}(x_i){\in}\N^n$,
\[
x\longrightarrow x{+}\sum_{1}^n\left(y_i^+{-}y_i^-\right)e_i,
\]
where, for $i{\in}\{1,\ldots,n\}$,  $e_i$ is the $i$th unit vector of $\N^n$.  Such a reaction  $y^-{\rightharpoonup} y^+$ is represented by the couple $(y^-,y^+){\in}{\cal C}^2$ and the set of possible chemical reactions of the CRN is denoted by ${\cal R}$. 

The kinetics used classically for CRNs is the celebrated {\em law of mass action}, see~\citet{Guldberg}, \citet{lund1965guldberg} and~\citet{Voit2015}. 
This is expressed by the fact that the above transition has a rate proportional to 
\[
\frac{x!}{(x{-}y^-)!}\steq{def} \prod_{i=1}^n \frac{x_i!}{(x_i{-}y_i^-)!}= \prod_{i=1}^n x_i(x_i{-}1)\cdots (x_i{-}y_i^-{+}1),
\]
provided that $x_i{\ge}y_i^-$ holds for all $i{\in}\{1,\ldots,n\}$, it is $0$ otherwise. The proportionality constants, called the \emph{reaction rate constants}, are written as $(\kappa_r, r{=}(y^-,y^+){\in}{\cal R})$. 
The transition rates  exhibit:
\begin{enumerate}
\item A {\em polynomial dependence} on the state variable;
 \item {\em Boundary effects}: A chemical reaction requires a minimal number of copies of some chemical species to take place.	 The reaction occurs only when  $x_i{\ge}y_i^-$, holds for all $1{\le}i{\le}n$. 
\end{enumerate}
From the point of view of the mathematical analysis, these two properties are the main features of stochastic CRNs. They are at the origin of complex behaviors  with multi-timescales  and local equilibria. 

\subsubsection*{Deterministic CRNs}
It should be noted that boundary effects and multi-timescales do not play a role in the mathematical analysis of the historical models of {\em deterministic } CRNs. The time evolution of a deterministic CRN is described in terms of  the solution of an ODE with a polynomial  dependence on the state variable.  See~\citet{Voit2015}. 
For these networks there is a priori one timescale.  A  classical convergence result of scaled stochastic CRNs to a such deterministic CRN is achieved precisely by modifying  reaction rates so that all reaction rates have the same order of magnitude. See~\citet{Mozgunov}. 

A major result on deterministic CRNs,  the {\em deficiency zero Theorem}, due to Feinberg (1979), states that under some topological conditions, i.e. if the CRN is {\em weakly reversible and with zero deficiency}, see~\citet{Feinberg},   then there is exactly one fixed point for the dynamical system and this equilibrium is locally stable. An interesting feature of this class of CRNs is that this existence and uniqueness holds for any choice of the set of constants $(\kappa_{r}, r{\in}{\cal R})$ as long as they are all positive. See~\citet{Horn1972}, \citet{Horn1972b} and~\citet{Feinberg1972}.

\subsection{Scaling Pictures of Stochastic CRNs}
To investigate the qualitative properties of these networks, a possible approach is to use a scaling parameter, like the volume $N$  for example, and to derive convergence results for the sample paths $(X_N(t))$, with  a convenient scaling in time and space. \citet{BallKurtz} is one of the early works in this domain, where several specific examples are analyzed in this way. The reaction rate constants may be dependent on the scaling parameter, so that the CRN structure is also dependent on $N$. This can be seen as a generalization of  scaling approaches described in~\citet{Mozgunov}. One of the motivation of this work was of identifying the  corresponding scaling exponents of several biological systems, like the classical Michaelis-Menten reaction.  See~\citet{KangKurtz} for an extension of this approach. In this spirit,  convergence results have been obtained for several general classes of stochastic CRNs  in~\citet{Crudu} and~\citet{Enger}. 

\subsection*{Scaling with Large Initial States}
From the point of view of the positive recurrence properties of the associated Markov process, see Section~\ref{PosRecSec} below, it is quite natural to consider the norm of the initial state $x$ as a scaling parameter.  In this scaling regime the structure of the CRN, i.e. the topology and the set of reaction rate constants,  is fixed, it does not depend on the scaling parameter. Since the general structure of the CRN is fixed,  a convergence result describes how the CRN returns to a neighborhood of $0$ starting from a large initial state.

This approach  is developed for several examples of CRNs in our paper. If there are analogies with~\citet{BallKurtz} on the technical tools used in particular, there are significant differences nevertheless. Convergence results (in distribution) of corresponding scaled sample paths are not always possible. 
\begin{enumerate}
\item The renormalization in time and space may depend in an essential way on the location of the initial point, on the direction at infinity $x/\|x\|$, for $\|x\|$ large, for example. Convergence results may then significantly differ depending on the region of the unit ball on $\R_+^n$ considered. In~\citet{KangKurtz}  this does not happen, the scaling parameter $N$ determines completely the possible convergence result.
\item One may have to look beyond  some stopping time $\tau_{x}$, i.e.  consider a time interval $(\tau_x,{+}\infty)$ for such a convergence. This may be  due to the fact some coordinates do not have initially the ``right'' order of magnitude with respect to $\|x\|$. Recall that, a priori, we explore all values of $x/\|x\|$. There is one such situation in an example of~\citet{BallKurtz}.
\end{enumerate}
This is illustrated in Sections~\ref{Binary} and~\ref{CapSec} and also Sections~3 and~4 of~\citet{LR24}.

This  scaling approach  may give a quite precise picture of how the state of the CRN returns to $0$ and thus  may look appealing to obtain a proof of positive recurrence. Our experience is that it does not seem to be the case in general. This is mainly due to the fact that uniform estimates on the directions to infinity, on $x/\|x\|$,  are required for such a result. This is complicated when situations~(a) or (b) described above occur.  See Section~\ref{Binary} on binary CRNs.  Our point of view is that the proof of positive recurrence is better handled by a result due to Filonov, see below. 

The situation is in fact reminiscent of the studies of queueing networks where a proof of positive recurrence using the possible limits of the scaled process, the {\em fluid limits}, can be done for some examples, see~\citet{Stolyar} and~\citet{Dai}, but does not always apply, for basically the same reasons. See the interesting examples of~\citet{Bramson94} and~\citet{Rybko}. See also~\citet{Bramson}.  Challenging examples have to be handled in a different way in general. 

The interest of scaling results with the norm of the initial state lies, in our view,  in a precise description of the qualitative properties of the sample paths of a given CRN. For example, it gives a precise mathematical description of some phenomena occurring in stochastic models of CRNs, such that bi-modal behaviors, see Section~\ref{CapSec}, or DIT phenomenon for example, see~\citet{LR24}, \ldots

It should be noted that with this scaling the multiple timescales appear ``naturally''. They are dependent of the orders of magnitude of the coordinates of the state vector since the reaction rates $(\kappa_r)$ are constant. Finding the ``right'' orders of magnitude is by the way not an easy task. In~\citet{BallKurtz} and~\citet{KangKurtz}, they are determined by the scaling coefficients $N^\delta$ chosen for the reaction rates.

\subsection{Positive Recurrence Properties}\label{PosRecSec}
In a stochastic context, it is natural to investigate the positive recurrence properties of  Markov processes $(X(t))$ associated to CRNs. \citet{Anderson2010} has shown that if the topological structure of the stochastic CRN satisfies the assumptions of the deficiency zero theorem for deterministic CRNs, then the Markov process has an invariant probability distribution with a product form expression. In particular, it is positive recurrent.  This result  has been extended to other classes of CRNs in ~\citet{Cappelletti2016,Cappelletti2018} and~\citet{jia_jiang_li_2021}, \ldots

When a product-form formula for the invariant distribution does not hold, the positive recurrence property can be established by using  a Lyapunov function $f$ associated to the $Q$-matrix $Q$ of the Markov process. Such a function  satisfies a relation  of the type $Q(f)(x){\le} {-}\gamma$, for some $\gamma{>}0$ and for all $x$, ``large'', i.e. outside some finite subset of the state space. It amounts to the fact that  $(f(X(t))$ decreases in one step in average when the initial state is large. See Proposition~8.14 of~\citet{Robert}. We will refer to it as the {\em classical Lyapunov criterion}. This has been used in~\citet{Anderson2018}, \citet{Agazzi2018}, \citet{CapConf}, \ldots

A well-known problem is of finding such a function. In practice it is not difficult to figure out  a partition of  subsets of ``large'' states and to define an appropriate function on  each of these subsets of the partition where the function decreases in one step in this subset. The main problem is of gluing these functions: At the boundaries of the subset of the partition, a jump may change the subset of the partition which has an impact on the  value of $Q(f)(x)$. \citet{Agazzi2018} provides a good illustration of the difficulties of this approach.  See also~\citet{CapConf}.

In this paper we stress the importance of a,  somewhat underestimated/not well-known,  result of the literature, Filonov's Theorem~\cite{Filonov1989}.  In general, in our view, it simplifies the proof of positive recurrence of CRNs.  Instead of looking at the next jump, as in the classical Lyapunov criterion, one may consider a random number of steps, by looking the process at the instant of a stopping time for example. With this method,  gluing problems of the one-step criterion may be avoided.  See Theorem~\ref{ErgCrit} for a precise formulation. With this approach, the partitioning  of the state space is done in fact on the initial states and not on the arrival states of a transition as for the classical Lyapunov criterion. We give several examples of its use in practice: in Section~\ref{SecAgaz} in particular, the positive recurrence analysis of the CRN considered in~\citet{Agazzi2018} is significantly simplified with this approach. 

It should be mentioned that an interesting notion of {\em tier structure} has been introduced in~\citet{Anderson2020}. Combined with the classical Lyapunov criterion, it provides another approach to the proof of positive recurrence of some stochastic CRNs, for which dominant reactions dissipate the chosen energy. It will not be discussed in this paper. 

\subsection{Overview of the Paper}
The paper is in the same spirit as~\citet{BallKurtz}, we consider essentially examples.  Stochastic CRNs have in general a  quite complex behavior. It is not well understood how to handle multiple timescales outside of the classical stochastic averaging framework of~\citet{Kurtz1992}, i.e.  when there are more than two timescales. And similarly for the impact of boundary effects. See~\citet{LR24-2,LR24}.

It seems to us that, for the moment,  considering interesting  examples is perhaps a possible way of making progress on these networks, to develop methods and results and also to identify  the important phenomena associated to CRNs.  In this paper, we have chosen several examples to illustrate several aspects of this approach. 
\begin{enumerate}
  \item The insight on their qualitative properties that can be obtained by a scaling analysis with the norm of the initial state;
\item The benefit of considering Filonov's Criterion to analyze their positive recurrence properties.
\end{enumerate}
The formal definitions and notations are introduced in Section~\ref{00ModelSec}.  
\subsubsection{Binary Stochastic CRNs}
Section~\ref{Binary} is devoted to the analysis of some binary CRNs, that are chemical reaction networks whose complexes have at most two molecules. A simple example of {\em triangular} network is considered with three complexes and the sink/source $\emptyset$. The proofs in this section are essentially elementary, the main motivation is to show  how Filonov's result can be used. Even in this simple setting,  the formulation of a convenient convergence result is not straightforward. There are cases with three regimes corresponding to different timescales and different functional limit theorems. This case also provides an example of how technical results on hitting times for the $M/M/\infty$ queue can be used to establish convergence results for CRNs. See Section~\ref{MMIsec}.  The  $M/M/\infty$ queue  in fact the basic CRN,
\[
\emptyset \mathrel{\mathop{\xrightleftharpoons[\mu]{\lambda}}} S_1.
\]
The role of these technical results in the study of CRNs is, up to now, perhaps not widely realized. See~\citet{LR24}.

\subsubsection{Agazzi and Mattingly's CRNs}
Section~\ref{SecAgaz} analyzes an interesting CRN proposed in~\citet{Agazzi2018}.  The purpose of this reference is of showing that with a small modification of the graph structure of a CRN, its associated Markov process can be either positive recurrent, null recurrent, or transient. The main technical part of this reference is  essentially devoted  to the construction of a Lyapunov function satisfying  the classical Lyapunov criterion. We show that Filonov's criterion can be in fact used with a simple function to prove the positive recurrence. Additionally, a scaling picture of the time evolution of this CRN is obtained.

\subsubsection{A CRN With A Bi-Modal Behavior}
In Section~\ref{CapSec}, a detailed analysis of the following CRN, for  $p{\ge}2$, 
\[
\emptyset \mathrel{\mathop{\xrightleftharpoons[\kappa_{1}]{\kappa_{0}}}}  S_1{+}S_2, \hspace{1cm} pS_1{+}S_2 \mathrel{\mathop{\xrightleftharpoons[\kappa_{3}]{\kappa_{2}}}}pS_1{+}2S_2,
\]
is achieved. When $p{=}2$, this is an important  example introduced in~\citet{CapConf} for the stability analysis of a large class of CRNs with two chemical species. 

This CRNs has a boundary effect in the sense that the second reaction does not occur when the first coordinate is less than $p$. The corresponding scaling results are deeply impacted by this discontinuity of the dynamic. This creates a natural bi-modal behavior. This situation does not seem to fit in the framework of  the general results of ~\citet{Crudu} and~\citet{Enger} or by~\citet{KangKurtz}. One of the reasons is that one of our scaling results involves an {\em explosive} Markov process on $(0,1]$ with a multiplicative structure, whose infinitesimal generator ${\cal A}$ is defined below. To the best of our knowledge this is  quite original in the literature of stochastic CRNs.  A related phenomenon has been investigated in~\citet{LR24}, with a non-explosive Markov  process  but also with a multiplicative component. We do believe that this type of property, which has not been thoroughly investigated in general, holds for a quite large class of CRNs.

The analysis of this apparently simple CRN  has to be handled with care.  We first show how Filonov's Criterion can be used for positive recurrence and then  a scaling analysis is achieved to get  interesting insights for the time evolution of this CRN.  It also gives an interesting example of the use of  estimates of stopping times, time change arguments, \ldots to derive scaling results for these models.

The  bi-modal  property is exhibited via a scaling analysis of this CRN  for  two  classes of initial states. The corresponding limiting results are: 
\begin{enumerate}
\item\label{it1} For an initial state of the form $(N,b)$, with $b{\in}\N$ fixed.\\
Theorem~\ref{TheoCapH} shows that the convergence in distribution of processes
  \[
  \lim_{N\to+\infty}\left(\frac{X_1(Nt)}{N},t{<}t_\infty\right)=\left(1{-}\frac{t}{t_\infty},t{<}t_\infty\right),
  \]
  holds with
  \[
 t_\infty{=}{1}\left/{\kappa_0\left(e^{\kappa_2/\kappa_3}{-}1\right)}\right.. 
  \]
\item\label{it2} If the initial state is of the form $(a,N)$, $a{<}p$.\\
For the convergence in distribution of its occupation measure, see Definition~\ref{defOcc}, the relation
  \[
  \lim_{N\to+\infty}\left(\frac{X_2(N^{p-1}t)}{N}\right)=(V(t))
  \]
holds, where $(V(t))$ is  a Markov process on $(0,1]$ with a multiplicative structure, its infinitesimal generator ${\cal A}$ is given by 
\[
{\cal A}(f)(x)=\frac{r_1}{x^{p-1}}\int_0^1\left(f\left(xu^{\delta_1}\right){-}f(x)\right)\diff u, \quad x{\in}(0,1],
  \]
  for any Borelian function $f$ on $(0,1)$, with $\delta_1{=}{\kappa_3(p{-}1)!}/{\kappa_1}$. This process is explosive and is almost surely converging to $0$. 
\end{enumerate}
In this example, to decrease the norm of the process, one has to use the timescale $(Nt)$ in~\eqref{it1} and $(N^{p-1}t)$ in~\eqref{it2} and  the decay in~\eqref{it1} is  linear with respect to time. This is in contrast with the examples of Sections~\ref{Binary} and~\ref{SecAgaz} where the ``right'' timescale to see the energy decrease is of the form $(t/N^\beta)$ with $\beta{\in}\{0,1/2,1\}$.  Note also that the limit of the first order of~\eqref{it2} is a {\em random} process, instead of a classical deterministic function solution of an ODE as it is usually the case. 
\section{Mathematical Models of CRNs}\label{00ModelSec}

\subsection{General Definitions for CRNs}
We now give the formal definitions for chemical reaction networks. 
\begin{definition}
A {\em chemical reaction network} (CRN) with $n$ {\em chemical species}, $n{\ge}1$, is defined by a triple $({\cal S},{\cal C},{\cal R})$,
\begin{itemize}
\item ${\cal S}{=}\{1,\ldots,n\}$ is the set of chemical  species;
\item ${\cal C}$, the set of {\em complexes}, is a finite subset of $\N^n$;
\item ${\cal R}$, the set of {\em chemical reactions}, is  a subset of ${\cal C}^2$.
\end{itemize}
\end{definition}
A chemical species $j{\in}{\cal S}$ is also represented as $S_j$. A complex $y{\in}{\cal C}$, $y{=}(y_j)$ is composed of $y_j$ \emph{molecules} of species $j{\in}{\cal S}$, its \emph{size} is $\|y\|{=}y_1{+}\cdots{+}y_n$. It is also described as
\[
y=\sum_{j=1}^n y_jS_j.
\]
The state of the CRN is given by a vector  $x{=}(x_i,1{\le}i{\le}n){\in}\N^n$, for $1{\le}i{\le}n$, $x_i$ is the number of copies of chemical species $S_i$.
A chemical reaction $r{=}(y_r^-,y_r^+){\in}{\cal R}$ corresponds to the transition of state, for $x{=}(x_i)$, 
\begin{equation}\label{eqCRN}
x\longrightarrow x{+}y_r^+{-}y_r^-=\left(x_i{+}y_{r,i}^+{-}y_{r,i}^-, 1{\le}i{\le}n\right)
\end{equation}
provided that $y_{r,i}^-{\le}x_i$ holds for  $1{\le}i{\le}n$, i.e. there are at least $y_{r,i}^-$ copies of chemical species of type $i$, for all $i{\in}{\cal S}$, otherwise the reaction cannot happen. Such a chemical reaction is classically represented as 
\[
\sum_{i=1}^n y_{r,i}^-S_i \rightharpoonup \sum_{i=1}^n y_{r,i}^+S_i,
\]
The notation $\emptyset$  refers to the complex associated to the null vector of $\N^n$,  $\emptyset{=}(0)$. For $y{=}(y_i) {\in}{\cal C}$, a chemical reaction of the type $(\emptyset, y)$  represents  an external source creating $y_i$ copies of species $i$, for $i{=}1$,\ldots, $n$.  A chemical reaction of the type $(y,\emptyset)$ consists in removing $y_i$ copies of species $i$, for $i{=}1$,\ldots, $n$, provided that there are sufficiently many copies of each  species.

\subsection{Law of Mass Action}\label{MarkovSec}
A stochastic model of a CRN  is represented by  a continuous time Markov jump process $(X(t)){=}(X_i(t),i{=}1,\ldots,n)$ with values in $\N^n$. The dynamical behavior of a CRN, i.e. the time evolution of the number of copies of each of the $n$ chemical species is governed by {\em the law of mass action}.  See~\citet{Voit2015}, \citet{lund1965guldberg} for surveys on the law of mass action.

For these kinetics, the associated $Q$-matrix of $(X(t))$ is defined so that, for $x{\in}\N^n$ and $r{=}(y_r^-, y_r^+){\in}{\cal R}$, the transition $x{\to}x{+}y_r^+{-}y_r^-$ occurs at rate 
\begin{equation}\label{00Qmat}
\kappa_rx^{(y_r^-)},
\end{equation}
where  $\kappa{=}(\kappa_r,r{\in}{\cal R})$ is a vector of non-negative numbers, for $r{\in}{\cal R}$, $\kappa_r$ is the reaction rate constant of $r$ and,
for $z{=}(z_i){\in}\N^n$ and $y{=}(y_i){\in}{\cal C}$,
\begin{equation}\label{Nota3}
z!\steq{def}\prod_{i=1}^n z_i!,\quad
z^{(y)}\steq{def}\frac{z!}{(z{-}y)!}=\prod_{i=1}^n \frac{z_i!}{(z_i{-}y_i)!},
\end{equation}
with the convention that $z^{(y)}{=}0$, if there exists some $i_0{\in}{\cal S}$ such that $y_{i_0}{>}z_{i_0}$.

The functional operator $ {\cal Q}(f)$ associated to this $Q$-matrix is defined by, for $x{\in}\N^n$, 
\begin{equation}\label{Qmat2}
 {\cal Q}(f)(x) = \sum_{r{\in}{\cal R}}\kappa_rx^{(y_r^-)}\left(f\left(x{+}y_r^+{-}y_r^-\right){-}f(x)\right),
\end{equation}
 for any function $f$ with finite support on $\N^n$. 

\subsection{An Important CRN: The $\mathbf{M/M/\infty}$ queue}\label{MMIsec}
This is a simple CRN with an external input and one chemical species,
\[
\emptyset \mathrel{\mathop{\xrightleftharpoons[\mu]{\lambda}}} S_1.
\]
The ${M/M/\infty}$ queue with input parameter $\lambda{\ge}0$ and output parameter $\mu{>}0$ is a  Markov process $(L(t))$  on $\N$ with transition rates
\[
x\longrightarrow
\begin{cases}
x{+}1&   \lambda \\
x{-}1&   \mu x.
\end{cases}
\]
The invariant distribution of $(L(t))$ is Poisson with parameter $\rho{=}\lambda/\mu$.

This fundamental process can be seen as a kind of discrete Ornstein-Uhlenbeck process. It has a long history, it has been used in some early mathematical models of telephone networks at the beginning of the twentieth century, see~\citet{Erlang09},  also in stochastic models of natural radioactivity in the 1950's, see~\citet{Hammersley} and it is the basic process of mathematical models of communication networks analyzed in the 1970's, see~\citet{Kelly}.  See Chapter~6 of~\citet{Robert}.

Technical results on this stochastic process turn out to be useful to investigate the scaling properties of some CRNs and, as we will see,  in the construction of couplings used in our proofs.

\subsection{Filonov's Stability  Criterion}\label{SecStab}
In this section we formulate a  criterion, due to Filonov~\cite{Filonov1989}, of positive recurrence for continuous time Markov processes associated to CRNs. It is an extension of the classical Lyapunov criterion, see Proposition~8.14 of~\citet{Robert}.  In our experience, it turns out to be very useful in the context of CRNs. See Theorem~8.6 of~\cite{Robert}.
\begin{definition}
 An energy function $f$ on ${\cal E}_0$ is a non-negative function such that, for all $K{>}0$, the  set $\{x{\in}{\cal E}_0: f(x){\le}K\}$ is finite. 
\end{definition}

\begin{theorem}[Filonov]\label{ErgCrit}
Let $(X(t))$ be an irreducible Markov process on ${\cal E}_0{\subset}\N^n$ associated to a CRN network with $Q$-matrix~\eqref{00Qmat}. If there exist
  \begin{enumerate}
  \item an integrable stopping time $\tau$ and $\eta{>}0$,  such that  $\tau{\ge}t_1{\wedge}\eta$,\\ 
for a constant $\eta{>}0$ and $t_1$ is the first jump of $(X(t))$;
  \item an energy function  $f$ on ${\cal E}_0$ and constants $K$ and $\gamma{>}0$ such that  the relation
  \begin{equation}\label{LyapCond}
    \E_x\left(f(X(\tau))\right){-}f(x)\le {-}\gamma \E_x(\tau),
  \end{equation}
  holds   for all $x{{\in}}{\cal E}_0$ such that $f(x){\ge}K$, 
  \end{enumerate}
  then  $(X(t))$ is a positive recurrent  Markov process. 
\end{theorem}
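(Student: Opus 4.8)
The plan is to reduce the continuous-time statement to a statement about the embedded discrete-time chain observed along a sequence of stopping times built by iterating $\tau$, and then apply a Foster--Lyapunov argument for discrete-time chains. First I would set $\tau_0{=}0$ and define recursively $\tau_{n+1}{=}\tau_n{+}\tau{\circ}\theta_{\tau_n}$, so that $(X(\tau_n))$ is a Markov chain on ${\cal E}_0$ by the strong Markov property and the shift-operator formalism recalled in Section~\ref{00ModelSec}. The hypothesis $\tau{\ge}t_1{\wedge}\eta$ guarantees that each increment $\tau_{n+1}{-}\tau_n$ is bounded below (in a suitable stochastic sense) so that $\tau_n{\to}{+}\infty$ almost surely; this is what lets us transfer recurrence of the embedded chain back to the continuous-time process and rules out that the $\tau_n$ accumulate. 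Then Relation~\eqref{LyapCond} says exactly that, off the finite set $B{\steq{def}}\{x: f(x){<}K\}$, the chain $(X(\tau_n))$ satisfies a Foster--Lyapunov drift inequality with energy function $f$ and drift bounded by $-\gamma\E_x(\tau){\le}-\gamma(\eta\wedge\E_x(t_1))$, i.e. by a strictly negative constant after noting that $\E_x(t_1)$ is bounded below on the finite set of states of any given energy level — actually we only need $\E_x(\tau)$ bounded below by a positive constant on $B^c$, which follows from $\tau{\ge}t_1{\wedge}\eta$ together with irreducibility and the fact that $f$ being an energy function makes $\{f{<}K'\}$ finite for every $K'$.

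Next I would invoke the standard discrete-time Foster--Lyapunov theorem (e.g. the Foster criterion, or Meyn--Tweedie): an irreducible chain on a countable state space admitting a non-negative function $f$ with $\{f{\le}K\}$ finite and $\E_x(f(X(\tau_1))){-}f(x){\le}{-}\gamma'$ for $x{\notin}B$ and $\E_x(f(X(\tau_1)))$ finite for $x{\in}B$ is positive recurrent; in particular the return time $\sigma_B$ of the embedded chain to $B$ has finite expectation from every starting point. The slightly delicate point here is verifying $\E_x(f(X(\tau_1))){<}{+}\infty$ for $x{\in}B$: this is where the convention $f(\dag){=}{+}\infty$ plus integrability of $\tau$ is used — if the process could explode before $\tau$ with positive probability from a state in $B$ the left-hand side would be $+\infty$, so one argues that for $x\in B$ the inequality, applied at states of bounded energy, controls things, or one simply adds this finiteness as part of what \eqref{LyapCond} delivers when read with the $\dag$ convention. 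From positive recurrence of $(X(\tau_n))$ and $\E_x(\sigma_B){<}\infty$, one gets that the continuous-time hitting time of the finite set $B$, namely $\sum_{n=0}^{\sigma_B-1}(\tau_{n+1}{-}\tau_n){=}\tau_{\sigma_B}$, has finite expectation, using $\E_x(\tau_{\sigma_B}){=}\E_x\big(\sum_n \E_{X(\tau_n)}(\tau)\ind{n<\sigma_B}\big)$ together with an integrability bound on $\E_y(\tau)$ (here one needs $\sup_{y\in B}\E_y(\tau){<}\infty$ and, more globally, a telescoping/optional-stopping estimate controlling $\E_x(\tau)$ by $f(x)/\gamma$ from \eqref{LyapCond} itself).

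Finally, having shown $\E_x(H_B){<}{+}\infty$ where $H_B$ is the continuous-time hitting time of the finite set $B$, standard theory for continuous-time Markov jump chains (finite mean return time to a finite set, plus irreducibility and non-explosiveness, the latter following from $\tau_n\to\infty$) yields that $(X(t))$ is positive recurrent. I would also record the corollary that $\tau{=}t_1$ recovers the classical Lyapunov criterion Corollary~\ref{ClasCrit} via Dynkin's formula, exactly as sketched in the introduction. The main obstacle I anticipate is not the Foster argument but the bookkeeping around integrability and non-explosion: one must be careful that iterating $\tau$ does not create an accumulation point of jump times, that $\E_x(\tau)$ is genuinely bounded below on $B^c$ (so the drift is by a true constant, not merely by a state-dependent negative quantity), and that $\E_x(f(X(\tau)))$ is finite for $x$ in the finite ``center'' set $B$ — all of which lean on the hypothesis $\tau{\ge}t_1{\wedge}\eta$ and on $f$ being an energy function, and must be threaded through carefully rather than waved at.
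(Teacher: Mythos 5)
There is a genuine gap at the heart of your reduction. You want to apply the classical discrete-time Foster--Lyapunov theorem to the embedded chain $(X(\tau_n))$, and for that you need the drift $\E_x(f(X(\tau))){-}f(x)\le{-}\gamma\E_x(\tau)$ to be bounded above by a \emph{strictly negative constant} outside the finite set $B{=}\{f{<}K\}$. You claim $\E_x(\tau)$ is bounded below on $B^c$ because $\tau{\ge}t_1{\wedge}\eta$. This is false in the CRN setting: the total jump rate $\lambda(x)$ is polynomial in $x$ and blows up as $\|x\|\to\infty$, so $\E_x(t_1){=}1/\lambda(x)\to 0$ and hence $\E_x(t_1{\wedge}\eta)\to 0$ along $B^c$ (which is an infinite set). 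Nothing in the hypotheses prevents $\E_x(\tau)$ itself from tending to $0$; indeed the whole point of stating the drift as ${-}\gamma\E_x(\tau)$ rather than ${-}\gamma'$ is that the guaranteed decrease of energy may be arbitrarily small when $\tau$ is small. So the uniform Foster condition you invoke is simply not available, and the first half of your argument does not go through. A related hand-wave occurs earlier: $\tau{\ge}t_1{\wedge}\eta$ does \emph{not} by itself give $\tau_n\to\infty$ almost surely, since the increments are only bounded below by $t_1{\circ}\theta_{\tau_n}{\wedge}\eta$ and $t_1{\circ}\theta_{\tau_n}$ can shrink to $0$ precisely when the process explodes; ruling out accumulation of the $\tau_n$ is part of what has to be proved, not a free consequence of the hypothesis.

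The paper avoids both problems by working directly with the supermartingale $Z_n{=}f(X(s_n)){+}\gamma s_n$ (where $s_n$ is your $\tau_n$) stopped at $\nu{=}\inf\{n: f(X(s_n)){\le}K\}$: Condition~\eqref{LyapCond} makes $(Z_{n\wedge\nu})$ a non-negative supermartingale, its almost sure convergence is used to show that on $\{\nu{=}{+}\infty\}$ the $s_n$ would accumulate, forcing explosion and hence $\limsup f(X(s_n)){=}{+}\infty$, a contradiction; then integrating gives $\gamma\E_x(s_{\nu}){\le}f(x)$, i.e.\ a finite expected continuous-time hitting time of the finite set $\{f{\le}K\}$, and one concludes with Proposition~\ref{prop1}. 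Note that the ``telescoping/optional-stopping estimate controlling $\E_x(\tau)$ by $f(x)/\gamma$'' that you defer to the end of your plan is exactly this supermartingale argument; once you carry it out, the Foster detour is both unnecessary and (as explained above) unusable. I would restructure the proof around that supermartingale from the start.
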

A function $f$ satisfying Condition~\eqref{LyapCond} is usually  referred to as a {\em Lyapunov function}. 

\section{Binary CRN Networks}\label{Binary}
In this section, we investigate simple examples of CRNs with complexes whose size is at most  $2$. 
\begin{definition}
A CRN network $({\cal S},{\cal C},{\cal R})$ with $n$ chemical species is {\em binary} if any complex $y{\in}{\cal C}$ is composed of at most two molecules, i.e.~$\|y\|{\leq}2$.
\end{definition}
The set of complexes can be represented as  ${\cal C}{=}J_0{\cup}J_1{\cup}J_2$, where, for $i{\in}\{0,1,2\}$, the subset $J_i$ is the set of complexes with $i$ chemical species, $J_i$ can be empty. An element  $y{\in}J_1$ is represented as $y{=}s_y$ for some $s_y{\in}{\cal S}$. Similarly,  $y{\in}J_2$ is written as $y{=}(s_y^1,s_y^2)$, with $s_y^1, s_y^2{\in}{\cal S}$.

\begin{proposition}\label{ODEBinProp}
  If $(X_N(t))$ is a sequence of Markov processes associated to a binary CRN with $n$ chemical species whose sequence of initial states $(x_N)$  is such that $(\|x_N\|)$ is converging to infinity and 
  \[
  \lim_{N\to+\infty}\frac{x_N}{\|x_N\|}=\ell_0{\in}\R_+^n,
  \]
then the family of random variables
\[
\left(\overline{X}_N(t)\right)\steq{def}\left(\frac{1}{\|x_N\|}X_N(t/\|x_N\|)\right),
\]
  converges in distribution to the solution $(\ell(t))$ of the ODE
\begin{equation}\label{ODEBin}
 \dot{\ell}(t)=\sum_{\substack{r{=}(y_r^-,y_r^+)\in {\cal R}\\y_r^-{\in}J_2}} \kappa_{r}(y_r^+{-}y_r^-)\ell_{s_{y_r^-}^1}(t)\ell_{s_{y_r^-}^2}(t),
\end{equation}
with initial state $\ell(0){=}\ell_0$.
\end{proposition}
\begin{proof}
  The arguments of the proof use standard stochastic calculus arguments,  they are omitted. 
\end{proof}
\bigskip

It should be noted that the timescale $(t/\|x\|)$  and the space scale $1/\|x\|$ are valid for all binary CRNs from the point of view of tightness properties. It does not mean that they are the only ones, or the most meaningful.  The timescale $(t/\|x\|)$ is well-suited when there are complexes of size two and when the associated chemical species are all in ``large'' number, of the order of $\|x\|$. Otherwise, it may be too slow to change the state of the CRN,  so that a faster timescale has to be used.  As it will be seen, depending on the type of initial state, it may happen that the timescales $(t/\sqrt{\|x\|})$ or  $(t)$  and the space scales $1/\sqrt{\|x\|}$ or $1$ are appropriate for the analysis of the asymptotic behavior of the time evolution of the CRN. The following simple example illustrates these considerations.

\subsection*{A Triangular Binary Network}\label{Ex2Sec}
The binary CRN studied, with two species, is represented by the following graph of reactions: 
\begin{figure}[ht]
\begin{tikzpicture}[->,node distance=1.5cm]
  \node (A) [below] {$S_2$};
  \node (B) [right of=A] {$S_1{+}S_2$};
  \node (C) [below of=B] {$S_1$};
  \node(D)[below of=A]{$\emptyset$};
     \draw[-left to]  (A) -- node[rotate=0,above] {$\kappa_{2}$}   (B);
     \draw[-left to]  (D) -- node[rotate=0,left] {$\kappa_{02}$}   (A);
     \draw[-left to]  (D) -- node[rotate=0,below] {$\kappa_{01}$}   (C);
     \draw[-left to] (B) -- node[rotate=0,right] {$\kappa_{12}$} (C);
     \draw[-left to] (C) -- node[rotate=0,below] {$\kappa_{1}$} (A);
\end{tikzpicture}
\caption{Triangle CRN}\label{FigC1}
\end{figure}
The purpose of this section is essentially pedagogical, to show, in a simple setting, how the Filonov's criterion can be used in practice by using scaling results and other ad-hoc arguments and also  how a possible result of  convergence in distribution  of scaled sample paths may depend (in a simple way here)  on the direction to infinity $x/\|x\|$.

We consider a sequence $(x_N)$ of initial states such that
\begin{equation}\label{InitTRG}
\lim_{N\to+\infty}\left(\frac{x_1^N}{\|x_N\|},\frac{x_2^N}{\|x_N\|}\right)=(\alpha_1,1{-}\alpha_1),
\end{equation}
with $\alpha_1{\in}[0,1]$ and the associated  Markov process  with initial state $x_N$  is denoted by $(X_N(t)){=}(X^N_1(t),X^N_2(t))$. 

The scalings consider three types of regions of $\N^2$ for the initial state: when the order of magnitude of the two  coordinates are respectively of the order of $(N,N)$,  $(O(\sqrt{N}),N)$, or $(N,O(1))$. It is shown that starting from a ``large'' state,  three timescales play a role depending on the asymptotic behavior of the initial state:
    \begin{enumerate}
    \item $t\mapsto t/N $, when both components of the initial state are of the order of $N$, i.e. when $0{<}\alpha_1{<}1$;
    \item $t\mapsto t/\sqrt{N}$, when $\alpha_1{=}0$ and $x^N_1$ is at most of the order of $\sqrt{N}$. 
    \item $t\mapsto t$, when $\alpha_1{=}1$ and $x_2^N$ is bounded by some constant $K$. 
    \end{enumerate}
 The boundary effects mentioned in Section~\ref{00ModelSec} play a role in case c), the second coordinate remains in the neighborhood of the origin essentially.

For each of the three regimes, the scaled norm of the state is decreasing to $0$. The limit results show additionally that the orders of magnitude in $N$ of both coordinates do not change. In other words the space scale is natural and not the consequence of a specific choice of the timescale. The following proposition gives a formal statement of these assertions. 

\begin{proposition}[Scaling Analysis]\label{ConvT2Prop}
Under the assumptions~\eqref{InitTRG}, if $(X^N_1(t),X^N_2(t))$ is the Markov process associated to the CRN  of Figure~\ref{FigC1}, we have 
\begin{enumerate}
\item if $\alpha_1{>}0$, then  for the convergence in distribution,
\begin{equation}\label{CvT2S1}
  \lim_{N\to+\infty} \left(\frac{X^N_1(t/N)}{N},\frac{X^N_2(t/N)}{N}\right)=(x_{a,1}(t),x_{a,2}(t)),
\end{equation}
where   $(x_{a,1}(t),x_{a,2}(t)){=}(\alpha_1,(1-\alpha_1)\exp(-\kappa_{12}\alpha_1 t))$.
\item If $\alpha_1{=}0$ and
\[
  \lim_{N\to+\infty} \frac{x^N_1}{\sqrt{N}}=\beta \in \R_+,
\]
then, for the convergence in distribution
\begin{equation}\label{CvT2S2}
  \lim_{N\to+\infty} \left(\frac{X^N_1(t/\sqrt{N})}{\sqrt{N}},\frac{X^N_2(t/\sqrt{N})}{N}\right)=(x_{b,1}(t),x_{b,2}(t)),
 \end{equation}
where $(x_{b,1}(t),x_{b,2}(t))$ is the solution of the ODE
\begin{equation}\label{OdeT2S2}
  \dot{x}_{b,1}(t)=\kappa_2 x_{b,2}(t),\quad   \dot{x}_{b,2}(t)={-}\kappa_{12}x_{b,1}(t)x_{b,2}(t),
\end{equation}
with $(x_{b,1}(0),x_{b,2}(0)){=}(\beta,1)$. 
\item If the initial state is  $x_N{=}(N,k)$, for $k{\in}\N$, then, for the convergence in distribution,
\begin{equation}\label{CvT2S3}    
\lim_{N\to+\infty} \left(\frac{X^N_1(t)}{N}\right)= (x_{c,1}(t))\steq{def}\left(e^{-\kappa_1 t}\right).
\end{equation}
\end{enumerate}
\end{proposition}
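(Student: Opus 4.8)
The plan is to read off part~(c) as an averaging phenomenon in which $X_2^N$ is the fast variable: starting from $(N,k)$ the first coordinate decreases at rate $\kappa_1 X_1^N$ (of order $N$) and increases at rate $\kappa_2 X_2^N$, while $X_2^N$ moves on the timescale $t/N$ and its rescaled value $X_2^N/N$ is negligible. Hence $(X_1^N(t)/N)$ should obey, to first order, the linear ODE $\dot\ell=-\kappa_1\ell$ with $\ell(0)=1$, whose solution is $(e^{-\kappa_1 t})$. The delicate point, and the main obstacle, is a bootstrap between the two coordinates: $X_2^N$ stays small (in a time‑integrated sense) only as long as $X_1^N$ remains of order $N$, while the clean decay of $X_1^N$ requires precisely that $X_2^N$ be negligible in front of $N$; one has to run both estimates simultaneously, on a stochastic interval.

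Concretely, I would fix $T>0$, set $\delta=\tfrac12 e^{-\kappa_1 T}$, pick a large constant $\overline C$, and introduce the stopping time $\sigma^N=\inf\{t: X_1^N(t)\le\delta N\}\wedge\inf\{t:\|X^N(t)\|_\infty\ge\overline CN\}$. On $[0,\sigma^N)$ one has $\delta N\le X_1^N(t)<\overline CN$, and since $X_2^N$ jumps up at rate $\kappa_1X_1^N$ and down at rate $\kappa_{12}X_1^NX_2^N$, a standard monotone coupling gives $X_2^N(t)\le L^N(t)$ for $t\le\sigma^N$, where $(L^N(t))$ is an $M/M/\infty$ queue (Section~\ref{00ModelSec}) with input rate $\kappa_1\overline CN$, per‑customer output rate $\kappa_{12}\delta N$, and $L^N(0)=k$. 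As $\E(L^N(s))\le\max(k,\kappa_1\overline C/(\kappa_{12}\delta))$ for all $s$, this yields the key a priori bound $\E\big(\int_0^{T\wedge\sigma^N}X_2^N(s)\diff s\big)\le c(T,\overline C)$, a constant independent of $N$.

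Next I would use the SDE formulation of Section~\ref{SDESec}. Writing $\overline X_1^N(t)\steq{def}X_1^N(t)/N$, the two reactions affecting the first coordinate give, for $t\le\sigma^N$,
\[
\overline X_1^N(t)=1+\overline M_1^N(t)-\kappa_1\int_0^t\overline X_1^N(s)\diff s+\frac{\kappa_2}{N}\int_0^tX_2^N(s)\diff s,
\]
where $(\overline M_1^N(t\wedge\sigma^N))$ is a martingale whose previsible increasing process, by Relation~\eqref{CrocCRN}, is $N^{-2}\int_0^{t\wedge\sigma^N}(\kappa_1X_1^N(s)+\kappa_2X_2^N(s))\diff s$, hence $O(1/N)$ in expectation by the bound above. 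By Doob's inequality the martingale term tends to $0$ uniformly on $[0,T]$, and by the a priori bound the last term is $O(1/N)$ in $L^1$; letting $\eps^N$ be the supremum over $[0,T]$ of the sum of the moduli of these two terms, $\eps^N\to0$ in probability. Gronwall's inequality applied to $g^N(t)=\overline X_1^N(t\wedge\sigma^N)$ against $\ell(t)=e^{-\kappa_1 t}$ (which solves $\ell(t)=1-\kappa_1\int_0^t\ell(s)\diff s$) then gives $\sup_{t\le T\wedge\sigma^N}|g^N(t)-\ell(t)|\le\eps^N e^{\kappa_1 T}$.

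Finally I would close the loop by showing $\P(\sigma^N\le T)\to0$. On $\{\sigma^N\le T\}$ intersected with the complement of $\{\|X^N\|_\infty\ge\overline CN\text{ on }[0,T]\}$ one has $X_1^N(\sigma^N)\le\delta N$, i.e.\ $g^N(\sigma^N)\le\delta$, whereas the Gronwall bound forces $g^N(\sigma^N)\ge\ell(T)-\eps^N e^{\kappa_1 T}=2\delta-\eps^N e^{\kappa_1 T}$; thus this event is contained in $\{\eps^N e^{\kappa_1 T}\ge\delta\}$, whose probability vanishes. The event $\{\|X^N\|_\infty\ge\overline CN\text{ on }[0,T]\}$ has probability at most $C'/\overline C$ by the uniform integrability of Proposition~\ref{NExpBinProp}. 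Letting $N\to\infty$ and then $\overline C\to\infty$ gives $\P(\sigma^N\le T)\to0$, whence $\sup_{t\le T}|X_1^N(t)/N-e^{-\kappa_1 t}|\to0$ in probability for every $T>0$, which is the asserted convergence in distribution; the ingredients beyond the bootstrap and the $M/M/\infty$ domination are the routine tightness and martingale estimates already used in the proof of Proposition~\ref{DetStoEq}.
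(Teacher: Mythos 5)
Your proposal only addresses part~(c) of the proposition; parts~(a) and~(b) are never touched, so as a proof of the stated result it is incomplete. Part~(a) is a one-line consequence of Proposition~\ref{ODEBinProp} (equivalently, of Proposition~\ref{ScalInitProp}), so its omission is minor. Part~(b), however, is a genuinely separate regime: the initial state has $x_1^N=O(\sqrt{N})$ and $x_2^N\sim N$, the timescale is $t/\sqrt{N}$, and the two coordinates are normalized differently (by $\sqrt{N}$ and by $N$). One has to integrate the SDE~\eqref{EqT1} on that timescale, control the first moments by Gronwall's lemma, kill the martingales via their increasing processes and Doob's inequality, and obtain tightness through the modulus-of-continuity criterion before identifying the limit with the ODE~\eqref{OdeT2S2}. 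Nothing in your averaging picture for~(c) transfers to this case (there is no fast/slow separation there; both drift terms are of order one on the $t/\sqrt{N}$ timescale), so this part of the proposition is simply missing.

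For part~(c) itself your argument is correct and follows essentially the same route as the paper's proof in Section~\ref{ConvT2S3Proof}: an a priori guarantee that $X_1^N$ stays of order $N$ on $[0,T]$, a domination of $X_2^N$ by an $M/M/\infty$ queue whose mean is bounded uniformly in $N$, and then the SDE for $X_1^N/N$ in which the martingale and the term $\tfrac{\kappa_2}{N}\int_0^t X_2^N(s)\diff s$ vanish, leaving $\dot\ell={-}\kappa_1\ell$. The one organizational difference is how the order-of-magnitude control on $X_1^N$ is obtained: the paper gets the lower bound directly from the pure-death comparison $X_1^N(t)\ge\sum_{i=1}^{x_1^N(0)}\ind{E_{\kappa_1,i}\ge t}$ together with Proposition~\ref{NExpBinProp} for the upper bound, whereas you run a stopping-time bootstrap ($\sigma^N$, Gronwall up to $\sigma^N$, then a contradiction showing $\P(\sigma^N\le T)\to0$). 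Both are valid; the paper's lower bound is slightly more direct since it avoids the self-referential step, while your bootstrap is the more robust template when no monotone comparison is available. The paper also goes further and identifies the limiting occupation measure of $X_2^N$ (a Poisson law with parameter $\kappa_1/\kappa_{12}$), which your proof does not attempt, but that is not required by the statement of~\eqref{CvT2S3}.
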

\begin{proof}
We give a quick proof of  the convergence~\eqref{CvT2S3} to illustrate the role of coupling methods with $M/M/\infty$ models to handle some technical difficulties.
The  proofs of the convergences~\eqref{CvT2S2} and~\eqref{OdeT2S2} use  similar ingredients and will be skipped for this reason.

The initial state is $x^N{=}(x^N_1, k)$ for some $k\in \N$, and $x_1^N$ such that
\[
  \lim_{N\to+\infty} \frac{x^N_1}{N}=1.
  \]
 For simplicity we will consider the CRN without external arrivals, i.e. $\kappa_{01}{=}\kappa_{02}{=}0$, the proof for the general case is similar. Provided that the process $(X_1^N(t))$ is of the order of $N$, the external arrivals of chemical species $1$ is  negligible for the first coordinate over a finite time interval. Similarly, the creation of chemical species $2$ is essentially due to  chemical species $1$ (at a rate of the order of $N$ much large than $\kappa_{02}$).
  
The associated Markov process $(X^N(t)){=}(X^N_1(t),X^N_2(t))$ can be represented as a solution of the following SDEs,
\begin{equation}\label{EqT1}
\begin{cases}
  \diff X^N_1(t)&={\cal P}_{2}((0,\kappa_{2}X^N_2(t{-})),\diff t){-}{\cal P}_1((0,\kappa_1X^N_1(t{-})),\diff t)\\
  \diff X^N_2(t)& ={\cal P}_1((0,\kappa_1X^N_1(t{-})),\diff t){-}{\cal P}_{12}((0,\kappa_{12}X^N_1(t{-})X^N_2(t{-})),\diff t),
\end{cases}
\end{equation}
with $X^N(0){=}(x_1^N,x_2^N)$. The independent Poisson processes $({\cal P}_i,i{=}1,2)$ have the intensity measure $\diff x\diff y$. See Section~\ref{SDESec} of the Appendix for the main definitions and notations of this setting. 

Define, for $i=1,2$,  
\[
\overline{X}^N_i(t)\steq{def}\frac{X^N_i(t)}{N}.
\]

We fix $T{>}0$. We have, for $t{\ge}0$,
\[
        X^N_1(t)\ge \sum_{i=1}^{X^N_1(0)}\ind{E_{\kappa_1,i}\ge  t},
\]
where $(E_{\kappa_1,i})$ is an i.i.d. sequence of exponential random variables with parameter $\kappa_1$, hence, for $\eps{>}0$,  there exists $\eta{>}0$ such that
\[
\P\left({\cal E}_N\right)\ge 1{-}\eps, \text{ with } {\cal E}_N=\left\{\eta\le \inf_{t\le T}\overline{X}^N_1(t) \right\}. 
\]
Define
\[
\tau_N=\inf\{t\ge 0: \overline{X}^N_1(t)\ge 2\},
\]
then a simple coupling shows that on the event ${\cal E_N}{\cap}\{\tau_N{\ge}T\}$, we have $X^N_1(t){\le}Y^N_1(t)$
and $X^N_2(t){\le}Y^N_2(t)$, for all $t{\le}T$, where $(Y^N_1(t),Y^N_2(t))$ is the solution of the SDE,
\begin{equation}\label{EqT2}
\begin{cases}
  \diff Y^N_1(t)&={\cal P}_{2}((0,\kappa_{2}Y^N_2(t{-})),\diff t){-}{\cal P}_1((0,\kappa_1Y^N_1(t{-})),\diff t)\\
  \diff Y^N_2(t)& ={\cal P}_1((0,2\kappa_1 N),\diff t){-}{\cal P}_{12}((0,\kappa_{12}\eta N Y^N_2(t{-})),\diff t),
\end{cases}
\end{equation}
with initial point $Y_N(0){=}X_N(0)$. 

The process $(Y^N_2(t))$ has the same distribution as $(L(Nt))$ where $(L(t))$ is an $M/M/\infty$ queue with input parameter $2\kappa_1$ and output parameter $\kappa_{12}\eta$.
\[
\P\left(\sup_{s{\le} T} \frac{Y^N_2(s)}{\sqrt{N}}\ge \eps\right)=\P\left(H_{\lfloor \eps \sqrt{N} \rfloor}\le N T\right),
\]
where,  $a{\in}\N$, $H_a$ is the hitting time of $a$ by $(L(t))$. Proposition~6.10 of~\citet{Robert} shows that if $\rho{=}2\kappa_1/(\kappa_{12}\eta)$, then the sequence of random variables $(\rho^aH_a/(a{-}1)!)$ converges in distribution to an exponentially distributed random variable as $a$ goes to  infinity.  Consequently, we obtain that $(Y^N_2(t)/N,0{\le}t{\le}T)$ converges in distribution to $0$.

It is then easy to show that $(Y^N_1(t)/N,0{\le}t{\le}T)$ converges in distribution to $(\exp({-}\kappa_1 t))$, from the relation $X^N_1(t){\le}Y^N_1(t)$ on the event ${\cal E_N}{\cap}\{\tau_N{\ge}T\}$, we conclude that $(P(\tau_N{\ge}T))$ converges to $0$ and therefore the convergence of  $(X^N_1(t)/N)$ to $(\exp({-}\kappa_1 t))$. Convergence~\eqref{CvT2S3} is proved. 
\end{proof}

\begin{proposition}[Stability Properties]
The Markov process  $(X(t)){=}X_1(t),X_2(t))$ associated to the CRN  of Figure~\ref{FigC1} is positive recurrent. 
\end{proposition}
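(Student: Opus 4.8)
The plan is to apply Filonov's criterion (Theorem~\ref{ErgCrit}) with the linear energy function $f(x_1,x_2)=x_1+x_2=\|x\|$, exploiting the scaling picture of Proposition~\ref{ConvT2Prop} to choose, on each of the three regions of the state space, a convenient deterministic stopping time $\tau$. The key observation is that in all three regimes identified for T1 — and hence, as noted in the paragraph preceding the statement, for T2 as well, since on a bounded time interval the external Poisson inputs contribute only an $O(1)$ number of extra molecules independent of $N$ — the scaled norm $\|X(t)\|/N$ strictly decreases over a time interval whose length is the appropriate power of $N$. Concretely, from~\eqref{CvT2S1}, starting from $x$ with $\alpha_1=x_1/\|x\|$ bounded away from $0$ and $1$, the norm at time $t/\|x\|$ converges to $\alpha_1+(1-\alpha_1)e^{-\kappa_{12}\alpha_1 t}<1$; from~\eqref{OdeT2S2}, starting near the $S_2$-axis the component $x_{b,2}$ decays; and from~\eqref{CvT2S3}, starting from $(N,k)$ the norm at time $t$ is $\sim Ne^{-\kappa_1 t}$, again a genuine decrease.

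First I would record the adaptation of Proposition~\ref{ConvT2Prop} to T2 (a one-line remark: add the Poisson input terms $\mathcal{P}_{01},\mathcal{P}_{02}$ to the SDEs; their contribution on $[0,T/N^{\beta}]$ is a.s. $O(1)$, hence vanishes after division by $N$). Next I would fix a large constant $K$ and a threshold, and partition $\{x:\|x\|\ge K\}$ into three (overlapping is fine) regions: $R_a=\{x_1\wedge x_2\ge \delta\|x\|\}$, $R_b=\{x_1\le \delta\|x\|\}$, $R_c=\{x_2\le M\}$ for suitable small $\delta>0$ and large $M$, checking these cover all large states. On $R_a$ I take $\tau=t_0/\|x\|$ for a fixed $t_0$; by~\eqref{CvT2S1} and the uniform integrability of $X_x^*(T)$ from Proposition~\ref{NExpBinProp} (which upgrades convergence in distribution to convergence of the relevant expectations), $\E_x(f(X(\tau)))/f(x)\to \alpha_1+(1-\alpha_1)e^{-\kappa_{12}\alpha_1 t_0}$, which is bounded away from $1$ uniformly for $\alpha_1\in[\delta,1-\delta]$, while $\E_x(\tau)/f(x)=t_0/\|x\|^2\to 0$; so Proposition~\ref{PropScS}'s inequalities hold in this region. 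On $R_b$ I take $\tau=t_0/\sqrt{\|x\|}$ and use~\eqref{OdeT2S2}: the norm decreases because $x_{b,2}$ strictly decreases and $x_{b,1}$ stays bounded (note $\|x\|\approx x_2$ here since $x_1=O(\sqrt{\|x\|})$). On $R_c$ I take $\tau=t_0$ (a deterministic constant, which is legitimate since $\tau=t_0\ge t_1\wedge t_0$) and use~\eqref{CvT2S3}: $\E_x(f(X(t_0)))/f(x)\to e^{-\kappa_1 t_0}<1$. In each case $\E_x(\tau)$ is either $o(f(x))$ or $O(1)$, so $C_0<\infty$ in Proposition~\ref{PropScS}. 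Since irreducibility of the chain on $\mathcal{S}=\N^2\setminus\{(0,0),(1,0)\}$ (with the added $\emptyset$-reactions, irreducibility is clear), non-explosiveness follows from Proposition~\ref{NExpBinProp}, and Theorem~\ref{ErgCrit} then yields positive recurrence.

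The main obstacle is the passage from convergence in distribution of the scaled processes to convergence of the ratios $\E_x(f(X(\tau)))/f(x)$ uniformly over each region — one needs both a uniform integrability input (supplied by Proposition~\ref{NExpBinProp}) and uniformity of the limit in the parameter $\alpha_1$ (resp. $\beta$, resp. the bounded second coordinate) describing how $x\to\infty$ within the region. A clean way around any non-uniformity is to argue by contradiction along a subsequence of initial states $x^{(n)}$ with $\|x^{(n)}\|\to\infty$: extract a further subsequence along which $\alpha_1^{(n)}\to\alpha_1^\infty$, apply Proposition~\ref{ConvT2Prop} to that subsequence, and derive the strict decay of $\E(f(X(\tau)))/f(x^{(n)})$ below $1-\eps$ for $n$ large, which contradicts the supposed failure. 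The boundary region $R_c$ requires a little care: one must check that starting from $(N,k)$ the process genuinely stays in a regime where~\eqref{CvT2S3} applies over $[0,t_0]$, i.e. $X_2$ does not itself become large — but $X_2$ is fed only by the reaction $S_1\to S_2$ at rate $\kappa_1 X_1=O(N)$ over time $t_0$, producing $O(N)$ molecules of $S_2$ that are immediately consumed, and one checks via a coupling with an $M/M/\infty$-type process that $X_2$ stays $O(\log N)$ with high probability, which does not affect the leading-order decay of $X_1$.
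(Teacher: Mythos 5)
Your overall strategy (Filonov's criterion via Proposition~\ref{PropScS}, partition of the large initial states, scaling limits upgraded to convergence of means by the uniform integrability of Proposition~\ref{NExpBinProp}) is the right one, and your treatment of the regions where Proposition~\ref{ConvT2Prop} actually applies is sound. But there is a genuine gap: your three regions do not cover the set of large states, and the uncovered states are exactly those for which none of the three scaling limits is available. Concretely, a state such as $x{=}(N,M{+}1)$, or more generally $x_1{\sim}N$ with $M{<}x_2{=}o(N)$, lies in none of $R_a$, $R_b$, $R_c$: it fails $x_1{\wedge}x_2{\ge}\delta\|x\|$, it fails $x_1{\le}\delta\|x\|$, and it fails $x_2{\le}M$. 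The symmetric intermediate regime $x_2{\sim}N$, $\sqrt{N}{\ll}x_1{=}o(N)$ is also problematic: it does sit inside your $R_b$, but there the hypothesis of limit~\eqref{CvT2S2} ($x_1^N/\sqrt{N}{\to}\beta{<}\infty$) is violated, so the decay you invoke on $R_b$ is not justified on all of $R_b$. Your subsequence/compactness argument does not repair this, because after extracting $\alpha_1^{(n)}{\to}0$ (resp.\ $1$) one can still have $x_1^{(n)}/\sqrt{N}{\to}\infty$ (resp.\ $x_2^{(n)}{\to}\infty$ with $x_2^{(n)}{=}o(N)$), and Proposition~\ref{ConvT2Prop} is silent there. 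The paper explicitly flags this: the scaling proposition ``does not give a full partitioning of the possible large initial states''.

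The paper closes the hole differently, and more cheaply, in the bulk: for all $x$ with $x_1{\gtrsim}\sqrt{N}$ and $x_2{\ge}1$ it takes $\tau{=}t_1$, the first jump, and computes directly
\[
\E_x\left(\|X(t_1)\|\right){-}\|x\|=\frac{\kappa_2x_2{-}\kappa_{12}x_1x_2}{\kappa_2x_2{+}\kappa_1x_1{+}\kappa_{12}x_1x_2{+}O(1)}\le {-}\frac{\kappa_{12}}{\kappa_{12}{+}\kappa_1}{+}o(1),
\]
which is uniformly negative precisely because the quadratic reaction $S_1{+}S_2{\rightharpoonup}S_1$ dominates as soon as both $x_1$ is large and $x_2{\ge}1$; this single computation absorbs your $R_a$, all the intermediate regimes above, and everything in $R_c$ except the axis. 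Only two boundary cases then remain: $x{=}(x_1,0)$ with $x_1$ large (handled with a stopping time $\tau_{k_0}$ counting conversions $S_1{\rightharpoonup}S_2$, not with limit~\eqref{CvT2S3}) and $x_1{\lesssim}\sqrt{N}$, $x_2{\sim}N$ (handled, as you do, with $\tau{=}1/\sqrt{N}$ and limit~\eqref{CvT2S2}). To fix your proof you would either need to adopt this first-jump estimate on the region $\{x_1{\ge}\sqrt{\|x\|},\,x_2{\ge}1\}$, or prove additional scaling statements for the intermediate regimes; as written, the claim ``checking these cover all large states'' is false and the argument does not go through.
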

\begin{proof}
In view of Theorem~\ref{ErgCrit}, we have to define a stopping $\tau$ time depending on the initial state. The norm $f_0{\steq{def}}\|{\cdot}\|$ is used as the energy function. As before we also ignore the external arrivals since, with high probability,  the stopping time chosen is (much) smaller than the instant of the first external arrival.
There are three cases. 
\renewcommand{\theenumi}{\arabic{enumi}}
\begin{enumerate}
\item When the initial state $X(0){=}x{=}(x_1,x_2)$ is such that
	\[
		x_1\geq C_0\steq{def} \frac{\kappa_2{+}1}{\kappa_{12}} \quad \text{and }\quad x_2\ge 1. 
	\]
We take $\tau{=}t_1{\wedge}1$, where $t_1$ is the instant of the first jump of $(X(t))$. We write $\cal{Q}$ the infinitesimal generator associated to $(X(t))$, see Relation~\eqref{Qmat2}. 
We have 
\[
	\cal{Q}(f_0)(x)=x_2(\kappa_2-\kappa_{12}x_1), 
\]
and therefore, by using Relation~\eqref{EqT1}, 
\begin{multline*}
\E_x\left(\|X(t_1{\wedge}1)\|-\|x\|\right)=\E_x\left(\int_0^{t_1{\wedge}1}\cal{Q}(f_0)(X(s))\diff s \right)
\\=x_2(\kappa_2-\kappa_{12}x_1)\E_x\left(t_1{\wedge}1\right)
\le{-}\E_x\left(t_1{\wedge}1\right).
\end{multline*}
\item The initial state is $x{=}(x_1, 0)$.\\
Let, for $k_0{\in}\N$, $\tau_{k_0}$ be the instant when the $(k_0{+}1)$th element $S_1$ is transformed into an $S_2$.
The norm of the process decreases when some of these molecules of $S_2$ disappear with reaction $S_1{+}S_2{\rightharpoonup}S_1$. The probability for a molecule of $S_2$ to be removed before a new transformation of $S_1$ into $S_2$ is lower bounded by $p{=}\kappa_{12}/(\kappa_1{+}\kappa_{12})$, therefore in average there are more than $pk_0$ molecules of $S_2$ killed before $\tau_{k_0}$. 
	The reaction $S_2{\rightharpoonup} S_1{+}S_2$ could also create some molecules during the time interval $[0,\tau_{k_0}]$. The rate of this reaction is bounded by $\kappa_2k_0$,  it is not difficult to show that there exists a constant $C_0$ such that the relations
  \[
    \E(\tau_{k_0}){\le} \frac{k_0}{\kappa_1(\|x\|{-}k_0)}\quad \text{and}\quad \E\left(\|X(\tau_{k_0})\|\right)\le \|x\|{-}k_0p{+}k_0\frac{C_0}{\|x\|{-}k_0}\, , 
  \]
hold. It is not difficult to see that the above relations are still valid, up to a change of constant, when $\tau_{k_0}$ is replaced by  $\tau_{k_0}{\wedge}1$. 
\item The initial state is $x{=}(x_1, x_2)$ with, $x_1{\le}C_0$.
  Define
  \[
  \tau_0=\inf\{t>0: X_1(t){\ge} C_0\},
  \]
  up to time $\tau_0$, the network is essentially similar to $S_2{\rightharpoonup}S_1{+}S_2{\rightharpoonup}S_1$. 
  It is not difficult to show that
  \[
  \lim_{x_2\to+\infty}x_2\E_{x}(\tau_0)=\frac{C_0}{\kappa_2} \text{ and }\lim_{x_2\to+\infty}\frac{\E_x(X_2(\tau_0))}{x_2}=1. 
  \]
At time $\tau_0$, the  state of the network network  is as in case~(1), it is enough to take $\tau$ as $\tau_0{+}\widetilde{\tau}$, where $\widetilde{\tau}$ is the variable $t_1{\wedge}1$ of~(1) associated to the process $(X(\tau_0{+}\cdot))$. More formally, if $(\theta_t)$ is the time-shift operator for the Markov process $(X(t))$, it is expressed as $\tau{=}\tau_0{+}t_1{\circ}\theta_{\tau_0}{\wedge}1$. See~\citet{Sharpe}. 
\end{enumerate}

\end{proof}
\section{Agazzi and Mattingly's CRN}\label{SecAgaz}
In this section, we study the chemical reaction network introduced by~\citet{Agazzi2018},
\begin{equation}\label{AgMattCRN}
 \emptyset \mathrel{\mathop{\xrightharpoonup{\kappa_1}}} S_1{+}S_2, \quad S_2\mathrel{\mathop{\xrightharpoonup{\kappa_2}}}\emptyset,\quad
 pS_1{+}q S_2\mathrel{\mathop{\xrightharpoonup{\kappa_3}}} (q{+}1)S_2\mathrel{\mathop{\xrightharpoonup{\kappa_4}}} qS_2,
\end{equation}
for $p$, $q{\in}\N$, $p{>}2$ and $q{\geq}2$. In~\citet{Agazzi2018}, the constants considered are $p{=}5$ and $q{=}2$. 

This reference shows that with a small modification of the topology of this CRN, its associated Markov process can be positive recurrent, null recurrent, or transient. The main technical part of the paper is devoted essentially to the construction of an energy function satisfying the classical Lyapunov criterion. The energy function  is defined in terms of polynomial functions in $x_1$ and $x_2$, on a partition of subsets of $\N^2$. The main technical difficulty is of gluing these functions in order to have a global function $f$ satisfying the classical Lyapunov criterion.  Note that there are also  interesting null-recurrence and transience properties in this reference.

In this section, Proposition~\ref{ErgCrit} gives a simple proof of the positive recurrence result of~\citet{Agazzi2018} by taking the norm as an energy function and a convenient stopping time depending on the initial state.

The continuous time Markov jump process $(X(t)){=}(X_1(t), X_2(t))$ associated to CRN~\eqref{AgMattCRN} has a $Q$-matrix given by, for $x{\in}\N^2$,
\[
x\longrightarrow x{+}\begin{cases}
  e_1{+}e_2& \kappa_1,\\
  {-}pe_1{+}e_2& \kappa_3 x_1^{(p)}x^{(q)}_2,\\
  {-}e_2& \kappa_2x_2{+}\kappa_4x^{(q+1)}_2,
\end{cases}
\]
where $e_1$, $e_2$ are the unit vectors of $\N^2$. This process is clearly irreducible on $\N^2$, and non explosive since $\emptyset {\rightharpoonup} S_1{+}S_2$ is the only reaction increasing the total number of molecules.  The fact that only this reaction increases the norm of the state  suggests that the proof  of the positive recurrence  should not be an issue.

\begin{proposition}\label{AMProp}
If $p{>}2$ and $q{\ge}2$, then the Markov process associated to the CRN~\eqref{AgMattCRN} is positive recurrent.
\end{proposition}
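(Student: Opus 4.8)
The plan is to apply Filonov's criterion (Theorem~\ref{ErgCrit}) with the linear energy function $f(x){=}\|x\|{=}x_1{+}x_2$ and a stopping time depending on the starting state. Write $R_1$ for the reaction $\emptyset{\rightharpoonup}S_1{+}S_2$ and $R_3$ for $pS_1{+}qS_2{\rightharpoonup}(q{+}1)S_2$. The process is non-explosive, since $R_1$, of bounded rate $\kappa_1$, is the only reaction increasing $\|X\|$, so $(\|X(t)\|)$ is dominated by $\|x\|$ plus a Poisson process of rate $\kappa_1$. It then suffices to attach to each $x$ with $\|x\|$ large a stopping time $\tau{=}\tau_x{\ge}t_1$ satisfying Condition~\eqref{LyapCond}; the set of such $x$ is split according to whether $x_2{\ge}q$ or $x_2{<}q$, the latter case forcing $x_1$ to be large.

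On $\{x_2{\ge}q\}$ the choice $\tau{=}t_1$ works, as in Corollary~\ref{ClasCrit}: $\E_x(f(X(t_1))){-}f(x){=}{\cal Q}(\|{\cdot}\|)(x)\,\E_x(t_1)$ with
\[
{\cal Q}(\|{\cdot}\|)(x)=2\kappa_1-(p{-}1)\kappa_3\, x_1^{(p)}x_2^{(q)}-\kappa_2 x_2-\kappa_4 x_2^{(q+1)} .
\]
Since $p{>}2$, for $\|x\|$ large with $x_2{\ge}q$ one of the two negative polynomial terms dominates $2\kappa_1$: the term $\kappa_2x_2$ if $x_2$ is large, and otherwise, as then $x_1{\to}{+}\infty$ while $x_2^{(q)}{\ge}q!$, the term $(p{-}1)\kappa_3x_1^{(p)}x_2^{(q)}$. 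Thus ${\cal Q}(\|{\cdot}\|)(x){\le}{-}\gamma$ on that region for suitable $K,\gamma{>}0$, which is Condition~\eqref{LyapCond}.

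The delicate region is $\{x_2{<}q,\ x_1{\ge}K'\}$. As long as $X_2{<}q$, reaction $R_3$ is disabled and $(X_2(t))$ coincides with an $M/M/\infty$ queue of input rate $\kappa_1$ and service rate $\kappa_2$, hence reaches level $q$ within an expected time bounded by a constant $C_1{=}C_1(\kappa_1,\kappa_2,q)$, independent of $x_1$. I would take $\tau$ to be the instant of the $m_x$-th firing of $R_3$, with $m_x{=}\lceil x_1/(2p)\rceil$; since the first jump cannot be an $R_3$-jump, $\tau{\ge}t_1$. On $[0,\tau]$ at most $m_x$ copies of $R_3$ fire, so $X_1(t){\ge}x_1{-}pm_x{\ge}x_1/4$ throughout (once $x_1{\ge}4p$), whence the rate of $R_3$ is of order $x_1^p$ whenever it is enabled; moreover each $R_3$-firing leaves $X_2{\ge}q{+}1$, re-enabling $R_3$. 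The only event disabling $R_3$ is a removal $x{\to}x{-}e_2$ carrying $X_2$ from $q$ down to $q{-}1$, which has probability $O(x_1^{-p})$ at each visit to level $q$ and costs an extra $M/M/\infty$-excursion of expected length at most $C_1$; combining these two facts yields $\E_x(\tau){\le}C_1{+}o(1){\le}C$, uniformly on this region. For the energy, let $N_1(t)$ count the firings of $R_1$ in $[0,t]$, a Poisson process of rate $\kappa_1$; as $R_1$, $R_3$ and the removals $x{\to}x{-}e_2$ change $\|X\|$ by $+2$, $-(p{-}1)$, $-1$ respectively and exactly $m_x$ copies of $R_3$ fire on $[0,\tau]$,
\[
\|X(\tau)\|\le\|x\|+2N_1(\tau)-(p{-}1)m_x ,
\]
so optional stopping gives $\E_x(\|X(\tau)\|){-}\|x\|{\le}2\kappa_1\E_x(\tau){-}(p{-}1)x_1/(2p){\le}2\kappa_1 C{-}(p{-}1)x_1/(2p)$, at most ${-}(p{-}1)x_1/(4p){\le}{-}\gamma\E_x(\tau)$ for $x_1$ large. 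This is Condition~\eqref{LyapCond} on the second region, and Theorem~\ref{ErgCrit} then gives the positive recurrence.

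I expect the uniform bound $\sup\{\E_x(\tau):x_2{<}q,\ x_1{\ge}K'\}{<}{+}\infty$ to be the only genuine obstacle. It rests on the $M/M/\infty$ comparison for $(X_2(t))$ below level $q$, together with the elementary but decisive fact that a reaction whose rate is polynomial of degree $p$ in a coordinate of size $\sim x_1$ performs $\Theta(x_1)$ firings in a total time of order $x_1^{1-p}{\to}0$; stopping after only $m_x\sim x_1/(2p)$ firings of $R_3$, rather than depleting $X_1$ completely, is exactly what keeps $X_1$ of order $x_1$, hence $R_3$ ultra-fast, on all of $[0,\tau]$. The step requiring some care is the control of the rare downward excursions of $X_2$ through level $q$, which momentarily switch $R_3$ off; this too is handled by the $M/M/\infty$ comparison. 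A deterministic $\tau$ equal to a large constant would serve as well, at the cost of additionally tracking the $M/M/\infty$-type draining of $X_2$ from values of order $x_1$.
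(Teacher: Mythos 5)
Your proposal is correct and follows the same overall strategy as the paper: Filonov's criterion with the linear energy $\|x\|{=}x_1{+}x_2$, the same partition of large states into $\{x_2{\ge}q\}$ (handled by $\tau{=}t_1$ and the generator computation, exactly as in the paper's Step~1) and $\{x_2{<}q,\ x_1 \text{ large}\}$ (handled by first letting $X_2$ climb to $q$ as an $M/M/\infty$ queue and then exploiting the ultra-fast reaction $pS_1{+}qS_2{\rightharpoonup}(q{+}1)S_2$). The one genuine difference is the stopping time in the hard region. The paper stops at $t_\nu$, the first jump that is \emph{not} a $({-}p,{+}1)$ jump; this makes the time bound trivial ($\E_x(t_\nu){\le}1/\kappa_1$, since a jump of $R_1$ is always a candidate) but shifts the work onto the lower bound $\liminf \E_{x}(\nu)/x_1{\ge}\delta$, proved via an explicit product formula for $\P(\nu{>}k)$ (this is where $p{>}2$ enters, through $x_1^2/x_1^{(p)}{\to}0$). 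You instead stop after a \emph{deterministic} number $m_x{=}\lceil x_1/(2p)\rceil$ of $R_3$-firings, which makes the energy decrease immediate ($-(p{-}1)m_x$ plus a Poisson input controlled by optional stopping) but shifts the work onto the uniform bound on $\E_x(\tau)$: you must control the rare excursions of $X_2$ below level $q$ that temporarily disable $R_3$. Your sketch of that bound is sound — while $R_3$ is enabled the competing rates are $O(x_1^{q+1})$ against $\gtrsim x_1^p X_2^q$ for $R_3$ itself, so the expected number of interruptions over $m_x{=}O(x_1)$ firings is $O(x_1^{2-p}){=}o(1)$, each costing a bounded $M/M/\infty$ excursion — but this is precisely the step that would need to be written out carefully; in a fully detailed write-up it is comparable in length to the paper's estimate of $\P(\nu{>}k)$. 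A small bonus of your variant is that the energy decay $-(p{-}1)m_x$ is non-random by construction, so no lower bound on a random jump count is needed.
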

\begin{proof}
Theorem~\ref{ErgCrit} is used with a simple energy function, the norm $\|x\|{=}x_1{+}x_2$  of the  state $x{=}(x_{1},x_{2}){\in}\N^2$. If the norm of the initial state is large enough, then the expected value of the norm of the process taken at a convenient stopping time  will be smaller, so that Condition~\eqref{LyapCond} of Theorem~\ref{ErgCrit} holds. 

\subsection*{Step~1}
As before, for $n{\ge}1$, $t_n$ denotes the instant of the $n$th jump. 
\[
\E_x\left(\|X(t_1)\|{-}\|x\|\right) = \left(2\kappa_1{-}\kappa_2 x_{2}{-}(p{-}1)\kappa_3 x_{1}^{(p)}x_{2}^{(q)}{-}\kappa_4x_{2}^{(q+1)}\right)\E_x\left[t_1\right],
\]
and, clearly, $\E_x(t_1){\le}{1}/{\kappa_1}$. 

If either $x_{2}{\ge}K_1{=}1{+}2\kappa_1/\kappa_2$ or $q{\le}x_{2}{<}K_1$ and $x_{1}{\ge}K_2{=}1{+}2\kappa_1/((p{-}1)\kappa_3q!)$, then
\[
\E_x\left(\|X(t_1)\|{-}\|x\|\right) \le {-}\gamma \E_x(t_1),
\]
for some $\gamma{>}0$.  Condition~\eqref{LyapCond} holds for this set of initial states. 

\subsection*{Step~2}
Now we consider initial states of the form $x_N{=}(N, b)$ with $b{<}q$ and $N$ large. 
The third and fourth reactions cannot occur until the instant
\[
\tau_1\steq{def} \inf\{t{>}0: X_2(t){\ge}q\},
\]
before this instant, the process $(X_2(t))$  has the sample paths $(L(t))$  of an $M/M/\infty$ queue, see Section~\ref{MarkovSec},  with arrival rate $\kappa_1 $ and service rate $ \kappa _2$. At time $\tau_1$ the state of the process  has the same distribution as the random variable
\[
(N{+}{\cal N}_{\kappa_1}(0,\tau_1), q),
\]
where ${\cal N}_{\kappa_1}$ is a Poisson process with rate $\kappa_1$ and ${\cal N}_{\kappa_1}(a,b)$ denotes the number of points of this process in the interval $[a,b]$. Clearly $\tau_1$ is integrable as well as the random variable ${\cal N}_{\kappa_1}(0,\tau_1)$. 
Therefore, $\E_{(N,b)}[X_1(\tau_1)]\leq N{+}\kappa_1 C_1 $, for some constant $C_1$.

To summarize, starting from the initial state $x_N{=}(N,b)$ with $b{<}q$,  the quantities $\E_{x_N}(\tau_1)$ and $\E_{x_N}(X_1(\tau_1)){-}N$  are bounded by a constant.  We are thus left to study the following case.

\subsection*{Step~3}
The  initial state is  $x_N{=}(N, q)$ with  $N$ large.

As long as $X_2(t){\geq}q$, the third  reaction is active, $p$ copies of $S_1$ are removed and a copy of $S_2$ is created. Initially its rate is of the order of  $N^{p}$, the fastest reaction rate by far.  We define $\nu$ as the number of jumps  before another reaction takes place. 
\[
\nu \steq{def} \inf\{ n\geq 1: X(t_n){-}X(t_{n-1}){\neq} (-p, 1)\},
\]

\[
\P(\nu{>}k)=\prod_{i=0}^{k-1}\left(1{-}\frac{\kappa_1{+}\kappa_2(q{+}i){+}\kappa_4(q{+}i)^{(q+1)}}{\kappa_3(N{-}pi)^{(p)}(q{+}i)^{(q)}{+}\kappa_1{+}\kappa_2(q{+}i){+}\kappa_4(q{+}i)^{(q+1)}}\right),
\]
with the convention that $q^{(q+1)}{=}0$. For $i{\ge}1$, 
\begin{multline*}
  \frac{\kappa_1{+}\kappa_2(q{+}i){+}\kappa_4(q{+}i)^{(q+1)}}{\kappa_3(N{-}pi)^{(p)}(q{+}i)^{(q)}{+}\kappa_1{+}\kappa_2(q{+}i){+}\kappa_4(q{+}i)^{(q+1)}}
  \\\leq
  \frac{(\kappa_1{+}\kappa_2(q{+}i))(q{+}i)^{-(q+1)}{+}\kappa_4}{\kappa_3(N{-}pi)^{(p)}/i{+}(\kappa_1{+}\kappa_2(q{+}i))(q{+}i)^{-(q+1)}{+}\kappa_4}
\le\frac{iC_0}{(N{-}pi)^{(p)}{+}i C_0 },
\end{multline*}
for some appropriate constant $C_0{>}0$. 
Hence, if we fix $0{<}\delta{<}1/2p$,
\[
\E_{x_N}(\nu)\ge \delta N \P(\nu{>}\delta N)\ge\delta N\left(1{-}\frac{\delta NC_0}{(N{-}p\lfloor \delta N\rfloor)^{(p)}{+}\delta N C_0 }\right)^{\lfloor \delta N\rfloor},
\]
so that, since $p{>}2$,
\begin{equation}\label{Est1}
\liminf_{N\to+\infty} \frac{1}{N}\E_{x_N}(\nu) \ge \delta.
\end{equation}
We define $\tau_2{=}t_{\nu}$, obviously
\[
\E_{x_N}(\tau_2)\le \frac{1}{\kappa_1},
\]
and we have
\[
\E_{x_N}(\|X(\tau_2)\|{-}\|x_N\|)\le (1{-}p)\E_{x_N}(\nu){+}2 \le -\gamma N,
\]
for some $\gamma{>}0$ if $N$ is sufficiently large, using Relation~\eqref{Est1}. Consequently is easy to see that there is a convenient constant $K$ such that Condition~\eqref{LyapCond} holds for this set of initial states and the stopping time $\tau_2$, and also for the initial states of Step~2 and the stopping time $\tau_1{+}\tau_2{\circ}\theta_{\tau_1}$.
The proposition is proved.
\end{proof}

\subsection*{A Scaling Picture}
The key argument of the proof of the positive recurrence is somewhat hidden behind an estimate of the expected value of the hitting time $\nu$ in Step~3. It is not difficult to figure out that, starting from the state $(N,q)$, the ``right'' timescale is $t{\mapsto}t/N^{p+q-1}$.  In this section we sketch a scaling argument to describe in more detail how the norm of the state goes to $0$. It could also give an alternative way to handle Step~3.

Define the Markov jump process $(Z_N(t))= (Z^N_1(t), Z^N_2(t))$ corresponding to the last two reactions of the  CRN network~\eqref{AgMattCRN}. Its $Q$-matrix is given by, for $z{\in}\N^2$,
\begin{equation}\label{QMZ}
z\longrightarrow z{+}\begin{cases}
  {-}pe_1{+}e_2& \kappa_3 z_1^{(p)}z_2^{(q)},\\
  {-}e_2& \kappa_4z_2^{(q+1)},
\end{cases}
\end{equation}
with initial state $(N,q)$. The scaling results of this section are obtained for this process. It is not difficult to show that they also  hold for the CRN network~\eqref{AgMattCRN} since the discarded reactions are on a much slower timescale. 

Define the Markov jump process $(Y_N(t))= (Y^N_1(t), Y^N_2(t))$ whose $Q$-matrix is given by, for $y{\in}\N^2$,
\[
y\longrightarrow y{+}\begin{cases}
  {-}pe_1{+}e_2& \kappa_3 y_1^{(p)},\\
  {-}e_2& \kappa_4(y_2{-}q),
\end{cases}
\]
with the same initial state.  If $p{\ge}2$, with standard arguments, it is not difficult to show the convergence in distribution
\begin{multline}\label{Ageq1}
\lim_{N\to+\infty}\left(\frac{1}{N}\left(Y^N_1,Y^N_2\right)\left(\frac{t}{N^{p-1}}\right)\right)=\left(y_1(t), y_2(t)\right)
\\\steq{def} \left(\frac{1}{\sqrt[p{-}1]{p(p{-}1)\kappa_3 t{+}1}},\frac{1{-}y_1(t)}{p}\right)
\end{multline}
From this convergence we obtain that for any $\eta{\in}(0,1/p)$, if $H^N_Y(\eta)$ is the hitting time  of $[\lfloor \eta N\rfloor,{+}\infty)$ by $(Y^N_2(t))$,   then the sequence $(N^{p-1}H^N_Y(\eta))$ converges in distribution to some constant.

For $t{\ge}0$, define the stopping time
\[
\tau^N_t=\inf\left\{s{>}0: \int_0^s \frac{1}{Y^N_2(u)^{(q)}}\diff u \ge t\right\},
\]
and $(\widetilde{Z}^N(t)){=}(Y^N(\tau^N_t))$, then it is easy to check that $(\widetilde{Z}^N(t))$ is a Markov process whose $Q$-matrix is given by Relation~\eqref{QMZ}. See Section~III.21 of~\citet{Rogers1} for example.  Consequently,  $(\widetilde{Z}^N(t))$ has the same distribution as $(Z^N(t))$.
\begin{proposition}
If $p$, $q{\ge}2$, $(X^N(0)){=}(\lfloor\delta N\rfloor,\lfloor(1{-}\delta)N/p\rfloor)$, for some $\delta{\in}(0,1)$, then for the convergence in distribution
  \[
  \lim_{N\to+\infty} \left(\frac{1}{N}X^N\left(\frac{t}{N^{p+q-1}}\right)\right)=(x_1(t),x_2(t)){=}\left(\left(y_1,\frac{1{-}y_1}{p}\right)\left(\phi^{-1}(t)\right)\right),
  \]
with 
      \[
(y_1(t))=\left(\frac{\delta}{\sqrt[p{-}1]{p(p{-}1)\delta^{p-1}\kappa_3 t{+}1}}\right) \text{ and }  \phi(t)\steq{def} \int_0^t \frac{p^{q}}{(1{-}y_1(s))^q}\diff s.
  \]
\end{proposition}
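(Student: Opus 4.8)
The plan is to derive the statement from the functional limit theorem for the auxiliary process $(Y^N(t))$ via the random time-change identity recalled just above, and then to transfer it to the full network~\eqref{AgMattCRN}. Throughout, $(Y^N(t))$ and $(Z^N(t))$ denote the processes with the $Q$-matrices given above but started from $(\lfloor\delta N\rfloor,\lfloor(1{-}\delta)N/p\rfloor)$; for $N$ large its second coordinate is at least $q$, so $\tau^N_t$ is well defined and, exactly as before, $(Y^N(\tau^N_t))$ has the same distribution as $(Z^N(t))$. \textbf{Step 1: a law of large numbers for $(Y^N(t))$.} Arguing as in the proof of Proposition~\ref{ConvT2Prop} --- see Section~\ref{AppExSec} and the functional law of large numbers of Section~\ref{AppProof} --- and as in the derivation of~\eqref{Ageq1}, one first checks that on the timescale $t\mapsto t/N^{p-1}$ the reaction $-e_2$, of rate $\kappa_4(Y^N_2{-}q)=O(N)$, is negligible against $-pe_1{+}e_2$, of rate $\kappa_3(Y^N_1)^{(p)}=O(N^p)$, and obtains the convergence in distribution, locally uniform in $t$,
\[
\lim_{N\to+\infty}\frac1N\bigl(Y^N_1,Y^N_2\bigr)\!\left(\frac{t}{N^{p-1}}\right)=(y_1(t),y_2(t)),
\]
where $(y_1,y_2)$ solves $\dot y_1={-}p\kappa_3 y_1^{\,p}$, $\dot y_2=\kappa_3 y_1^{\,p}$ with $(y_1(0),y_2(0))=(\delta,(1{-}\delta)/p)$. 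Integrating gives $y_1(t)=\delta\bigl(p(p{-}1)\delta^{p-1}\kappa_3 t{+}1\bigr)^{-1/(p-1)}$, and since $y_1{+}py_2$ is conserved, $y_2(t)=(1{-}y_1(t))/p$. In particular $y_1$ is positive and non-increasing and $y_2$ takes values in $[(1{-}\delta)/p,1/p]$, so for every $c<(1{-}\delta)/p$ and every $T>0$ one has $\inf_{t\le T}Y^N_2(t/N^{p-1})\ge cN$ with probability tending to $1$.

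\textbf{Step 2: the additive functional driving the time change.} Set $\bar A^N(v)\steq{def}N^{p+q-1}\int_0^{v/N^{p-1}}\frac{\diff u}{(Y^N_2(u))^{(q)}}$; the substitution $u=w/N^{p-1}$ gives $\bar A^N(v)=\int_0^v\frac{N^q}{(Y^N_2(w/N^{p-1}))^{(q)}}\,\diff w$. By Step~1, $Y^N_2(w/N^{p-1})\to+\infty$ with $Y^N_2(w/N^{p-1})/N\to y_2(w)$, hence $N^q/(Y^N_2(w/N^{p-1}))^{(q)}\to 1/y_2(w)^q=p^q/(1{-}y_1(w))^q$, and the lower bound of Step~1 furnishes a dominating constant on the integration interval; therefore
\[
\lim_{N\to+\infty}\bar A^N(v)=\int_0^v\frac{p^q}{(1{-}y_1(w))^q}\,\diff w=\phi(v)
\]
in distribution. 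As the $\bar A^N$ are non-decreasing and $\phi$ is continuous, the convergence is locally uniform in $v$; moreover $p^q v\le\phi(v)\le p^q(1{-}\delta)^{-q}v$, so $\phi$ is a bi-Lipschitz increasing bijection of $\R_+$ and $\phi^{-1}$ is continuous and strictly increasing on $\R_+$.

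\textbf{Step 3: inversion, composition and conclusion.} From $\int_0^s\frac{\diff u}{(Y^N_2(u))^{(q)}}=N^{-(p+q-1)}\bar A^N(N^{p-1}s)$ one gets $\tau^N_{t/N^{p+q-1}}=N^{-(p-1)}(\bar A^N)^{-1}(t)$, and inverting the locally uniform convergence of the non-decreasing functions $\bar A^N$ yields $(\bar A^N)^{-1}(t)\to\phi^{-1}(t)$ locally uniformly, in distribution. Feeding this random time change into Step~1 and using the continuity of $(y_1,y_2)$, a composition (continuous mapping) argument shows that
\[
\frac1N Y^N\bigl(\tau^N_{t/N^{p+q-1}}\bigr)=\frac1N Y^N\!\left(\frac{(\bar A^N)^{-1}(t)}{N^{p-1}}\right)
\]
converges, locally uniformly in distribution, to $\bigl(y_1(\phi^{-1}(t)),y_2(\phi^{-1}(t))\bigr)$; by the time-change identity this is also the limit of $\frac1N Z^N(t/N^{p+q-1})$, and it equals $(x_1(t),x_2(t))$ since $y_2{=}(1{-}y_1)/p$. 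Finally, on $[0,t/N^{p+q-1}]$ the two reactions $\emptyset\rightharpoonup S_1{+}S_2$ and $S_2\rightharpoonup\emptyset$ of~\eqref{AgMattCRN} discarded in the definition of $(Z^N(t))$ have rates $O(1)$ and $O(N)$ respectively; since $p{+}q{-}1\ge3$, the expected number of such events tends to $0$, so a coupling shows that the Markov process of~\eqref{AgMattCRN} started at $(\lfloor\delta N\rfloor,\lfloor(1{-}\delta)N/p\rfloor)$ has the same scaling limit, which proves the proposition.

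\textbf{Main obstacle.} The delicate point is the composition argument of Step~3: the random time change $(\bar A^N)^{-1}$ is a functional of the very path $Y^N$ it is composed with, so one must establish the joint locally uniform convergence of the pair $(\frac1N Y^N(\cdot/N^{p-1}),(\bar A^N)^{-1})$ and invoke the continuity of the map $(f,\theta)\mapsto f\circ\theta$ at pairs with $f$ continuous. This rests on the uniform lower bound $Y^N_2\gtrsim N$ of Step~1, which is also what legitimates the domination in Step~2; the remaining ingredients (stochastic calculus, Doob's and Gronwall's inequalities, tightness via the modulus of continuity) are routine and identical to those of Section~\ref{AppProof}.
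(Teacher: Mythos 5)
Your proof is correct and follows essentially the same route as the paper: the law of large numbers of Relation~\eqref{Ageq1} for $(Y^N)$ on the timescale $t/N^{p-1}$ (adapted to the initial state of the proposition), composed with the random time change $\tau^N_t$, and a final coupling to discard the two slow reactions of the full network. The paper's own proof is only a two-line sketch citing these two ingredients, and your detailed execution --- in particular the joint convergence needed to compose the scaled path with the inverse of its own additive functional --- is a faithful filling-in of that sketch.
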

\begin{proof}
As mentioned above, from this initial state and this timescale, the processes  $(Z^N(t))$ and  $(X^N(t))$ have the same asymptotic behavior for values of the order of $N$. 
The proof uses the convergence~\eqref{Ageq1} and the time-change argument described above.
\end{proof}

The above proposition shows that on a convenient timescale, both coordinates of $(X^N(t))$ are of the order of $N$. The scaled version of the first one is converging to $0$, while the second component is increasing.

If $Y^N(0){=}(\lfloor\delta N\rfloor,\lfloor(1{-}\delta)N/p\rfloor)$, for some $\delta{>}0$, let 
\[
R_N\steq{def}\inf\{t{>}0: Y^N_1(t)\le \sqrt[p]{N}\}. 
\]
By writing the evolution of $(Y^N(t))$ in terms of an SDE like Relation~\eqref{SDESec}, one easily obtains,
\begin{multline*}
\E(Y^N_1(R_N{\wedge}t))=\lfloor\delta N\rfloor{-}p\kappa_3 \ E\left(\int_0^{R_N{\wedge}t}Y^N_1(s)^{(p)}\diff s\right)\\
\le \lfloor\delta N\rfloor{-}p\kappa_3 \left(\lfloor \sqrt[p]{N}\rfloor \right)^{(p)} E\left(R_N{\wedge}t\right),
\end{multline*}
hence
\[
E\left(R_N{\wedge}t \right)\le \frac{\lfloor\delta N\rfloor}{p\kappa_3 (\lfloor \sqrt[p]{N}\rfloor )^{(p)}},
\]
by using the monotone convergence theorem, we obtain that
\[
\sup_{N} E\left(R_N\right)<{+}\infty.
\]
It is easily seen that the same property holds for $(X^N_1(t))$. 

To finish the description of the return path to $(0,0)$, we can assume therefore that $X^N(0){=}(\lfloor\sqrt[p]{N}\rfloor,N)$. 
It is not difficult to see that the reaction $(q{+}1)S_2\mathrel{\mathop{\xrightharpoonup{\kappa_4}}} qS_2$ is driving the evolution as long as $(X^N_2(t))$ is ``large'' since $(X^N_1(t))$ cannot grow significantly on the corresponding timescale. More formally, also with the same arguments as in Section~\ref{ConvT2Prop}, the convergence in distribution
\[
\lim_{N\to+\infty}\left(\frac{1}{N}\left(X^N_1,X^N_2\right)\left(\frac{t}{N^{q}}\right)\right)=\left(0, \frac{1}{\sqrt[q]{1{+}\kappa_4qt}}\right)
\]
holds.  The second coordinate returns to $0$. 
\section{A CRN with Bi-Modal Behavior}\label{CapSec}
In this section, the positive recurrence and scaling properties of the following interesting CRN are investigated
\begin{equation}\label{CapCRN}
\emptyset \mathrel{\mathop{\xrightleftharpoons[\kappa_{1}]{\kappa_{0}}}}  S_1{+}S_2, \hspace{1cm} pS_1{+}S_2 \mathrel{\mathop{\xrightleftharpoons[\kappa_{3}]{\kappa_{2}}}}pS_1{+}2S_2,
\end{equation}
with $p{\ge}2$.

This is an important model  introduced and discussed from the point of view of its stability properties in~\citet{CapConf} for $p{=}2$. The boundary effects are as follows: the second reaction cannot occur if there are less than $p$ copies of $S_1$, and if the number of copies of $S_2$ is zero, only external arrivals change the state of the CRN.  They complicate significantly the verification of a Lyapunov criterion.

We show how Theorem~\ref{ErgCrit} can be used for positive recurrence and that a scaling analysis gives an interesting insight for its time evolution. This is also an example of a CRN with a non-trivial dynamic behavior  which can be investigated with scaling ideas and stochastic calculus involving time change arguments.

Section~\ref{SecTCA}  investigates the positive recurrence properties. It is also an occasion to have an other look at the choice of a Lyapunov function in view of  Condition~\ref{LyapCond} of Theorem~\ref{ErgCrit}. Section~\ref{CapBound} considers the limiting behavior of the sample paths of the CRN with a large initial state close to one of the axes.

The Markov process $(X(t))= (X_1(t), X_2(t))$ associated to this CRN has a $Q$-matrix $Q$ given by, for $x{\in}\N^2$,
\[
x\longrightarrow x{+}\begin{cases}
  e_1{+}e_2& \kappa_0,\\
  -e_1{-}e_2& \kappa_1x_1x_2,
\end{cases}
x\longrightarrow x{+}\begin{cases}
   e_2& \kappa_2 x_1^{(p)}x_2,\\
  {-}e_2& \kappa_3 x_1^{(p)}x_2^{(2)},
\end{cases}
\]
where $e_1$, $e_2$ are the unit vectors of $\N^2$. 

By using the SDE formulation of Section~\ref{SDESec} of the appendix, the associated Markov process can be represented by  the solution $(X(t)){=}(X_1(t),X_2(t))$ of the SDE
\begin{equation}\label{SDECap}
\begin{cases}
  \diff X_1(t)&={\cal P}_{0}((0,\kappa_{0}),\diff t){-}{\cal P}_{1}((0,\kappa_{1}X_1X_2(t{-})),\diff t),\\
  \diff X_2(t)&={\cal P}_{0}((0,\kappa_{0}),\diff t) {-}{\cal P}_{1}((0,\kappa_{1}X_1X_2(t{-})),\diff t)\\
        &\hspace{15mm} {+}{\cal P}_{2}\left(\left(0,\kappa_2 X_1^{(p)}X_2(t{-})\right),\diff t\right)\\
        &\hspace{15mm}  {-}{\cal P}_{3}\left(\left(0,\kappa_{3}X_1^{(p)}X_2^{(2)}(t{-})\right),\diff t\right),
\end{cases}
\end{equation}
where ${\cal P}_{i}$, $i{\in}\{0,1,2,3\}$, are fixed independent Poisson processes on $\R_+^2$ with intensity measure $\diff s{\otimes}\diff t$.

\noindent
{\sc A slow return to $0$.}
The second set of reactions of  this CRN needs $p$ copies of $S_1$ to be active.  If the initial state is $(0,N)$,  copies of $S_1$ are created at rate $\kappa_0$, but they are removed quickly at a rate greater than $\kappa_1N$. The first instant when $p$ copies of $S_1$ are present has an average of the order of $N^{p-1}$. See Lemma~\ref{LemCP}.  At this instant, the number of $S_2$ species is $N{+}p$, and the second coordinate can then decrease, quickly in fact. The network exhibits a kind of bi-modal behavior due to this boundary condition.

Starting from the initial state $x{=}(0,N)$,  the time to decrease $(X_2(t))$ by an amount of the order of $N$ has thus an average of the order of $N^{p-1}$. When $p{>}2$ and if we take the usual norm $\|\cdot\|$ as a Lyapunov function, this results is at odds with the condition~\eqref{LyapCond} of Theorem~\ref{ErgCrit}. This problem could in fact be fixed at the cost of some annoying technicalities. Our approach will be of taking another simple, and somewhat natural, Lyapunov function.  See Section~\ref{SecTCA}. An initial state of the form $(N,0)$ leads also to another interesting boundary behavior.

\subsection{Positive Recurrence}\label{SecTCA}
\begin{proposition}\label{CapPR}
The Markov process $(X(t))$ is positive recurrent.
\end{proposition}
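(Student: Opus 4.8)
The plan is to apply Filonov's criterion (Theorem~\ref{ErgCrit}) with the linear energy function $f(x_1,x_2){=}x_1{+}\lambda x_2$ for a suitably chosen constant $\lambda{>}0$, and with a stopping time $\tau$ that depends on which region of the state space the (large) initial state lies in. The point of introducing the weight $\lambda$, rather than taking the plain norm, is precisely to cope with the slow boundary behaviour flagged in the text: starting from $(0,N)$ it takes time of order $N^{p-1}$ before $p$ copies of $S_1$ appear and $X_2$ can start to decay, which is incompatible with the integrability condition of Proposition~\ref{PropScS} when $p{>}2$ and $f{=}\|\cdot\|$; choosing $\lambda$ large enough so that a single firing of the reaction $S_1{+}S_2{\rightharpoonup}\emptyset$ produces a decrease of size $1{+}\lambda$ while the costly waiting happens only when $X_2$ is already comparatively small will fix this. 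First I would record, for $x{=}(x_1,x_2)$,
\[
Q(f)(x)=\kappa_0(1{+}\lambda){-}\kappa_1(1{+}\lambda)x_1x_2{+}\lambda\kappa_2 x_1^{(p)}x_2{-}\lambda\kappa_3 x_1^{(p)}x_2^{(2)},
\]
so that the only potentially destabilising term is $\lambda\kappa_2 x_1^{(p)}x_2$, which is active only when $x_1{\ge}p$ and is dominated by $\lambda\kappa_3 x_1^{(p)}x_2^{(2)}$ as soon as $x_2{\ge}1{+}\kappa_2/\kappa_3$.

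Next I would partition the large initial states into three regimes, mirroring the discussion after \eqref{SDECap} and the structure of Proposition~\ref{AMProp}. \textbf{Regime 1:} $x_2{\ge}K_1$ for a large constant $K_1$, or ($x_1{\ge}p$ and $x_2{\ge}1$); here I take $\tau{=}t_1$, the first jump, and show directly that $Q(f)(x){\le}{-}\gamma$, using that the term $x_1^{(p)}x_2^{(2)}$ beats both $x_1^{(p)}x_2$ and $x_1x_2$ once $x_2$ is large, and that $\E_x(t_1){\le}1/\kappa_0$ is bounded below as well since the jump rate is at least $\kappa_0$. This is the routine classical step. \textbf{Regime 2:} $x{=}(N,b)$ with $b$ bounded (say $b{\le}1{+}\kappa_2/\kappa_3$) and $N$ large — the $(N,0)$-type boundary. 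Here I use the $M/M/\infty$-queue comparison from Section~\ref{MarkovSec}: while $X_2$ is small, the $e_2$-reactions are slow, $X_1$ behaves like $N$ plus a bounded perturbation, and I let $\tau$ be the first time $X_2$ reaches, say, $K_1$; one checks $\E_x(\tau)$ is bounded (it is dominated by a hitting time of a linear birth--death-type process fed at rate $\kappa_0$) while over $[0,\tau]$ the reaction $S_1{+}S_2{\rightharpoonup}\emptyset$ fires order $N$ times, giving $\E_x(f(X(\tau))){-}f(x){\le}{-}\gamma N{\le}{-}\gamma'\E_x(\tau)$. \textbf{Regime 3:} $x{=}(a,N)$ with $a{<}p$ bounded and $N$ large — the $(0,N)$-type boundary, the genuinely hard case. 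Here the $e_2$-reactions are frozen until $p$ copies of $S_1$ accumulate; I let $\tau_1{=}\inf\{t: X_1(t){\ge}p\}$ and then let $\tau$ be $\tau_1$ plus a further first-jump or hitting time after which $X_2$ has dropped by order $N$. The key estimates are $\E_x(\tau_1){=}O(N^{p-1})$ (Lemma~\ref{LemCP}, cited in the text) and the fact that, once $p$ copies of $S_1$ are present with $X_2$ of order $N$, the reaction $S_1{+}S_2{\rightharpoonup}\emptyset$ together with $pS_1{+}2S_2{\rightharpoonup}pS_1{+}S_2$ brings $X_2$ down by order $N$ on a much faster timescale, so the decrease $\lambda\cdot(\text{order }N)$ of $f$ must be weighed against $\E_x(\tau){=}O(N^{p-1})$; choosing the threshold for the $X_2$-decay to be a large multiple of $N^{p-1}$ (rather than $N$) makes the balance $\E_x(f(X(\tau))){-}f(x){\le}{-}\gamma N^{p-1}{\le}{-}\gamma'\E_x(\tau)$ work. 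Finally I would patch the regimes: for a general large $x$ one of the three applies (after possibly composing stopping times via the shift operators, e.g. $\tau_1{+}\tau{\circ}\theta_{\tau_1}$ as in Step~3 of Proposition~\ref{AMProp}), and with $f(x){\ge}K$ for $K$ large enough Condition~\eqref{LyapCond} holds with a uniform $\gamma$, so Theorem~\ref{ErgCrit} gives positive recurrence.

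The main obstacle I expect is Regime~3: one must show both that $\E_x(\tau_1)$ is genuinely of order $N^{p-1}$ (not larger) — this needs the $M/M/\infty$ estimate of Lemma~\ref{LemCP} controlling how long it takes for $p$ simultaneous copies of $S_1$ to be present against their fast removal at rate $\kappa_1 X_1 X_2\approx\kappa_1 X_1 N$ — and that after $\tau_1$ the second coordinate actually decreases by an amount comparable to $N$ within a controlled additional time, without $X_2$ having grown meanwhile (it can only be fed at rate $\kappa_0$ plus the $e_2$-creations $\kappa_2 x_1^{(p)}x_2$, which are themselves throttled by the scarcity of $S_1$). Getting a clean lower bound on the $X_2$-decay and an upper bound on $\E_x(\tau)$ that together beat the product $\gamma\E_x(\tau)$ is where the bi-modal boundary behaviour makes the bookkeeping delicate; everything else is a variation on the arguments already used for the CRN of Section~\ref{SecAgaz}.
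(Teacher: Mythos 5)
Your overall architecture (Filonov's criterion with a region-dependent stopping time, the hitting time of $p$ by $X_1$ as the key random time near the vertical axis, Lemma~\ref{LemCP} for its expectation) matches the paper's, but your choice of a \emph{linear} energy function $f(x){=}x_1{+}\lambda x_2$ breaks down in exactly the regime you identify as the hard one, and the fix you propose there is not available. Starting from $x{=}(a,N)$ with $a{<}p$, nothing can decrease the energy before $X_1$ reaches $p$ (the only active transitions are $x{\to}x{\pm}(e_1{+}e_2)$, and at $x_1{=}0$ the drift of any linear function is $+(1{+}\lambda)\kappa_0{>}0$), so any admissible $\tau$ satisfying \eqref{LyapCond} must dominate the hitting time of $\{x_1{=}p\}$, whose expectation is of order $N^{p-1}$. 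Condition~\eqref{LyapCond} then demands a decrease of $f$ of order $\gamma N^{p-1}$, while a linear function of the state can decrease by at most $O(N)$ from $(a,N)$ no matter how long you wait: $X_2{\le}N{+}O(1)$ and $X_2{\ge}0$, so "choosing the threshold for the $X_2$-decay to be a large multiple of $N^{p-1}$" is impossible for $p{>}2$ (and iterating cycles does not help, since each further drop of $X_2$ by $O(N)$ costs another $O(N^{p-1})$ of waiting). Rescaling by $\lambda$ changes nothing, since it affects both sides of the balance by constants. This is precisely the obstruction the paper points out just before Section~\ref{SecTCA}, and it is why the proof there uses the polynomial Lyapunov function $f_p(x){=}x_1{+}x_2^p$: a decrease of $X_2$ by a fixed $k_0$ from level $x_2$ then lowers the energy by about $k_0\,p\,x_2^{p-1}$, which matches the $O(x_2^{p-1})$ waiting time, and the argument closes with $\tau_2{=}\tau_0{+}\tau_1{\circ}\theta_{\tau_0}$.

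Two secondary problems. First, your Regime~1 includes states with $x_1{\ge}p$ and $x_2{=}1$, where by your own formula $Q(f)(x){=}\kappa_0(1{+}\lambda){-}\kappa_1(1{+}\lambda)x_1{+}\lambda\kappa_2x_1^{(p)}$ is large and \emph{positive} for $x_1$ large (the creation $pS_1{+}S_2{\rightharpoonup}pS_1{+}2S_2$ dominates), so $\tau{=}t_1$ cannot work there; these states must be folded into the horizontal-axis analysis. Second, in Regime~2 the claim that "while $X_2$ is small, the $e_2$-reactions are slow" is wrong for this CRN: with $x_1{\sim}N$ and $x_2{\ge}1$ their rates are of order $N^p$, the fastest in the system. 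The correct picture (Section~\ref{CAPHA} of the paper) is that $X_2$ equilibrates almost instantly around a birth-and-death quasi-stationary distribution near $\kappa_2/\kappa_3$, it does not climb to a large threshold $K_1$, and the energy decrease must be extracted by counting a fixed number $n_0$ of firings of $S_1{+}S_2{\rightharpoonup}\emptyset$ while controlling the excursions of $X_2$ to $0$; the paper also needs the bound $\E(X_2(T_{n_0})^p){\le}C_1$ because its energy function carries the $x_2^p$ term.
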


Theorem~\ref{ErgCrit} is used to prove this property. The proof is not difficult but it has to be handled with some care. We will introduce two auxiliary processes with which the process $(X_N(t))$ can be decomposed. One describes the process when the first coordinate is below $p$ and the other when it is larger than $p$. This representation gives a more formal description of the bi-modal behavior mentioned above. Additionally, it will turn out to be helpful to establish the scaling properties of this CRN in Section~\ref{CapBound}. 
For $x{=}(x_1,x_2){\in}\N^2$, we introduce 
\begin{equation}
	f_p(x){=}x_1{+}x_2^p,
\end{equation}
$f_p$ will be our Lyapunov function. The strategy is of analyzing separately the two boundary behaviors. The first one is essentially associated with the initial state $(0,N)$ which we have already seen. The other case is for an initial state of the form $(N,0)$, the problem here is of having the second coordinate positive sufficiently often so that reaction $S_1{+}S_2\rightharpoonup \emptyset$ can decrease significantly the first coordinate.

\subsubsection{Large Initial State along the Horizontal Axis}\label{CAPHA}
In this section it is assumed that the initial state is $x(0){=}(x^0_1,b)$, where $b{\in}\N$ is fixed and $x^0_1$ is ``large''. Without loss of generality one can assume $b{>}0$, otherwise nothing happens until an external arrival.

As long as the second coordinate of $(X(t))$ is  non-null the transitions associated to ${\cal P}_{i}$, $i{=}2$, $3$ occur at a fast rate. When $(X_2(t))$ is $0$, only one chemical reaction may happen, external arrivals and at a ``slow'' rate $\kappa_0$.

We define by induction the non-increasing sequence $(T_k)$ as follows, $T_0{=}0$, and
\[
T_{k+1}=\inf\{t{>}T_k: X_1(t){-}X_1(t{-}){=}{-}1\}. 
\]
The variables $(T_k)$ are stopping times for the underlying filtration $({\cal F}_t)$ defined as in the appendix, see Relation~\eqref{SDEFilt}.

For $t{>}0$, by using the fact that the Poisson process ${\cal P}_{i}$, $i{=}1$, $2$, $3$ are independent and $(X_2(t))$ is greater than $1$ until $T_1$ at least,  we have
\[
\P(T_1{\ge}t)\le \E\left(\exp\left({-}\kappa_1x_1^0\int_0^t X_2(s)\diff s\right)\right)\leq\exp\left({-}\kappa_1x^0_1t\right),
\]
hence $\E(T_1){\le}1/(\kappa_1x^0_1)$. Similarly, with the strong Markov property, for $1{\le}k{<}x^0_1$,
\[
\E(T_{k+1}{-}T_k){\le}\frac{1}{\kappa_0}{+}\frac{1}{\kappa_1(x^0_1{-}k)},
\]
the additional term $1/\kappa_0$ comes from the fact that $X_2(T_k)$ can be zero, so that one has to wait for an exponentially distributed amount of time with parameter $\kappa_0$ to restart the CRN. 

For $n_0{\ge}1$, we have seen that the random variable $T_{n_0}$ is stochastically bounded by the sum of $2n_0$ i.i.d.  exponentially distributed random variables with the same positive parameter, hence
\[
C_0\steq{def} \sup_{x_1^0>n_0}\E_{x^0_1}(T_{n_0}) < {+}\infty
\]

Let ${\cal E}_1$ be the event when ${\cal P}_{1}$ has a jump before ${\cal P}_{0}$ in SDE~\eqref{SDECap}, then
\[
\P({\cal E}_1^c){\le}\frac{\kappa_0}{\kappa_1x^0_1{+}\kappa_0}.
\]
Similarly, for $k{\ge}2$,  ${\cal E}_k$ is a subset of the event ${\cal E}_{k-1}$ for which ${\cal P}_{1}$ has a jump before ${\cal P}_{0}$ after the first time after $T_k$ when $(X_2(t))$ is greater than $1$, then
\begin{equation}\label{CapEqE}
\P_{x_1^0}({\cal E}_k^c){\le}\sum_{i=0}^{k-1}\frac{\kappa_0}{\kappa_1(x^0_1{-}i){+}\kappa_0}{\le}\frac{\kappa_0 k}{\kappa_1(x^0_1{-}k){+}\kappa_0}.
\end{equation}

Let $s_1$ be the first instant of jump of ${\cal P}_{0}((0,\kappa_0){\times}(0,t])$. From $t{=}0$, as long as the point process  ${\cal P}_{0}$, does not jump in SDE~\eqref{SDECap}, that is, on the time interval $[0,s_1]$,  up to a change of time scale $t{\mapsto}X_1X_2(t)$, the process $(X_1(t),X_2(t))$ has the same sequence of visited states as the solution $(Y(t))$ of the SDE 
\begin{equation}\label{Yeq2}
\begin{cases}
  \diff Y_1(t)&= {-}{\cal P}^Y_{1}((0,\kappa_{1}),\diff t),\\
  \diff Y_2(t)&= {-}{\cal P}^Y_{1}((0,\kappa_{1})),\diff t)\\
        &\hspace{15mm} {+}{\cal P}^Y_{2}\left(\left(0,\kappa_2Y_1(t{-})^{(p)-1}\right),\diff t\right)\\
        &\hspace{15mm}  {-}{\cal P}^Y_{3}\left(\left(0,\kappa_{3}Y_1^{(p)-1}(Y_2(t{-}){-}1)^+\right),\diff t\right),
\end{cases}
\end{equation}
with the same initial state and the slight abuse of notation $y^{(p){-}1}{=}y^{(p)}/y$. The random variables ${\cal P}^Y_{i}$, $i{\in}\{1,2,3\}$, are independent Poisson processes on $\R_+$ independent of ${\cal P}_{i}$, $i{\in}\{0,1,2,3\}$ with the same distribution as ${\cal P}_0$. 
 In particular if $u_1$ is the first instant when $(Y_1(t))$ has a downward jump, an independent exponential random variable with parameter $\kappa_1$,  then the relation $Y_2(u_1){=}X_2(T_1)$ holds on the event $\{T_{1}{\le}s_1\}$.

 From $t{=}0$, as long as the first coordinate of $(Y_1(t))$ does not change, the second component $(Y_2(t))$ has the same distribution as $(L_b((x_1^0)^{(p)-1}t))$, where $(L_b(t))$ is a birth and death process with birth rate $\kappa_2 $ and death rate $\kappa_3(x-1)$, for $x{\ge}1$ and  initial state $b$. The process $(L_b(t))$ can be expressed with the process of an $M/M/\infty$ queue. It is easily seen $(\E(L_b(t)^p))$ is a bounded function, consequently,
\begin{equation}\label{CapC0}
\sup_{x(0)} E\left(X_2(T_{1})^p\right) \le C_1 {<} {+}\infty,
\end{equation}
by induction, the same result holds for $T_{n_0}$ for a convenient constant $C_1$.

Note that if $X_2(T_1{-}){=}1$, the next chemical reaction after time $T_1$ will be $$\emptyset \rightharpoonup S_1+S_2,$$
and therefore the downward jump of $X_1$ will be canceled.

At time $T_1$, a downward jump of  the process $(X_1^N(t))$ is possible if it happens when $X_2(T_1{-})\geq 2$ i.e. if $L_b((x_1)^{(p)-1}u_1){\ne}0$.
It is easy to construct a coupling of the processes $(L_b(t))$ and $(L_0(t))$, such that the relation $L_b(t){\ge}L_0(t)$ holds for all $t{\ge}0$. The convergence of $(L_0(t))$ to equilibrium gives the existence of $K_0{\ge}0$ and $\eta_0{>}0$ such that if $x^0_1{\ge}K_0$ then $\P(L_0((x_1)^{(p)-1}u_1){>}0){\ge}\eta_0$. 

We can gather these results, by using the stochastic bound on $T_{n_0}$, we obtain the relations
\begin{align*} E_{x(0)}&(f_p(X(T_{n_0}))){-}f_p(x(0))\le {-}{n_0}\eta_0\P_{x(0)}({\cal E}_{n_0})\\
  &  +\E_{x(0)}\left({\cal P}_{0}\left((0,\kappa_0){\times}(0,T_{n_0}]\ind{{\cal E}_{n_0}^c}\right)\right)     +E\left(X_2(T_{n_0})^p\right){-}b^p\\
    &\le {-}\eta_0n_0+n_0\eta_0\P_{x(0)}({\cal E}_{n_0}^c){+}\kappa_0C_0 {+}C_1.
\end{align*}
One first choose $n_0$ so that $n_0{>}3(\kappa_0C_0 {+}C_1)/\eta_0$ and then with Relation~\eqref{CapEqE},  $K_1{\ge}K_0$ such that
$n_0\eta_0\P_{K_1}({\cal E}_k^c){<}(\kappa_0C_0 {+}C_1)$. We obtain therefore that if $x^0_1{>}K_1$, then
\begin{equation}\label{CapHAPR}
\E_{x(0)}\left(f_p\left(X(T_{n_0})\right){-}f_p(x(0))\right)\le -\delta,
\end{equation}
for some $\delta{>}0$ and $\sup(\E_{x(0)}(T_{n_0}): x_1{\ge}K){<}{+}\infty$. 
Relation~\eqref{CapHAPR} shows that Condition~\eqref{LyapCond} of Theorem~\eqref{ErgCrit} is satisfied for our Lyapunov function $f_p$ and stopping time $T_{n_0}$ for the initial state of the form $(x^0_1,b)$.

\subsubsection{Initial State with a Large Second Component}\label{CAPVA}
In this section it is assumed that the initial state is $x(0){=}(a,x^0_2)$ with  $a{<}p$ and $x^0_2$ is large. We note that, as long as $(X_1(t))$ is strictly below $p$, the two coordinates experience the same jumps, the quantity $(X_2(t){-}X_1(t))$ does not change.

For $x{\ge}0$ and $k{\le}p-1$, we introduce the process  $(Z(k, x, t))$ the solution of the SDE
\begin{equation}\label{SDECapZ}
  \diff Z(k, x^0_2,t)={\cal N}_{\kappa_0}(\diff t) {-}{\cal P}_{Z}((0,\kappa_{1}Z(k, x^0_2,t{-})(x^0_2{-}k{+}Z(k, x^0_2,t{-})))),\diff t),\\
\end{equation}
with $Z(k, x^0_2,0){=}k$ and ${\cal P}_Z$ is a Poisson process on $\R_+^2$ and ${\cal N}_{\kappa_0}$ is a Poisson process on $\R_+$ with parameter $\kappa_0$. This process will be used to represent $(X(t))$ when its first coordinate is less than $p{-}1$. I

For $z{<}p$, we define
\[
	S_Z(z, x^0_2)\steq{def}\inf\{t{>}0: Z(z, x^0_2,t){=}p\},
\]
if $X(0){=}(0,x^0_2)$, then  it is easily seen that the relation
\[
(X(t{\wedge}S_Z(0, x^0_2))){\steq{dist}}(Z(0,x^0_2, t{\wedge}S_Z(0,x^0_2)),x^0_2{+}Z(0, x^0_2, t{\wedge}S_Z(0, x^0_2)))
\]
holds by checking the jump rates.

We define, for $x{=}(x_1,x_2){\in}\N^2$, 
\[
\lambda(x)=\kappa_0{+}\kappa_1x_1x_2{+}\kappa_2x_1^{(p)}x_2{+}\kappa_3x_1^{(p)}x_2^{(2)},
\]
it is the total  jump rate of $(X(t))$ in state $x$.

\begin{lemma}\label{LemCP}
  For $x_1^0{\ge}\kappa_0/(\kappa_1p)$,
  \[
		\limsup_{x^0_2\rightarrow +\infty}\frac{\E(S_{Z}(0,x^0_2))}{(x_2^0)^{p-1}}\le C_2,
  \]
  for some constant $C_2$.
\end{lemma}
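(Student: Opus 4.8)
The plan is to recognize that $(Z(0,x^0_2,t))$, the solution of SDE~\eqref{SDECapZ} with $k{=}0$, is a continuous-time birth-and-death process on $\N$: from a state $z$ it jumps to $z{+}1$ at rate $\kappa_0$ and, for $z{\ge}1$, to $z{-}1$ at rate $\mu_z\steq{def}\kappa_1z(x^0_2{+}z)$; the random variable $T_Z(0,x^0_2)$ is then the first hitting time of $p$ starting from $0$. Note that this process, hence the quantity to be estimated, does not involve $x_1^0$ at all, so the hypothesis $x_1^0{\ge}\kappa_0/(\kappa_1p)$ does not enter the estimate itself.

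First I would reduce everything to a first-step analysis. Write $m_i$ for the mean time taken by the chain to pass from $i$ to $i{+}1$, for $0{\le}i{\le}p{-}1$. From $0$ only a birth is possible, so $m_0{=}1/\kappa_0$; for $1{\le}i{\le}p{-}1$, conditioning on the nature of the first jump out of $i$ gives $m_i=\frac{1}{\kappa_0{+}\mu_i}{+}\frac{\mu_i}{\kappa_0{+}\mu_i}(m_{i-1}{+}m_i)$, i.e.\ after simplification $m_i=\frac{1}{\kappa_0}{+}\frac{\mu_i}{\kappa_0}m_{i-1}$. Decomposing the passage $0{\to}1{\to}\cdots{\to}p$ then gives $\E(T_Z(0,x^0_2))=\sum_{i=0}^{p-1}m_i$.

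Next I would bound each $m_i$. Only the states $0,\dots,p{-}1$ are visited before $p$ is reached, and on this range $\mu_i\le\bar\mu\steq{def}\kappa_1(p{-}1)(x^0_2{+}p)$; feeding this into the recursion and unrolling yields $m_i\le\frac{1}{\kappa_0}\sum_{j=0}^{i}(\bar\mu/\kappa_0)^{j}$, so that once $x^0_2$ is large enough that $\bar\mu\ge\kappa_0$ one gets $m_i\le\frac{i+1}{\kappa_0}(\bar\mu/\kappa_0)^{p-1}$ for every $i\le p{-}1$. Summing over $i$, $\E(T_Z(0,x^0_2))\le\frac{p(p+1)}{2\kappa_0}(\bar\mu/\kappa_0)^{p-1}$, and since $\bar\mu/\kappa_0\sim(\kappa_1(p{-}1)/\kappa_0)x^0_2$ as $x^0_2\to+\infty$, dividing by $(x^0_2)^{p-1}$ and taking the $\limsup$ proves the lemma, with for instance $C_2=\frac{p(p+1)}{2\kappa_0}\bigl(\kappa_1(p{-}1)/\kappa_0\bigr)^{p-1}$. (Solving the recursion exactly gives the sharper value $(p{-}1)!\,\kappa_1^{p-1}/\kappa_0^{p}$, but only the existence of $C_2$ is needed.)

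I do not expect a real obstacle. The two points to handle with a little care are: getting the first-step recursion right — the coefficient collapses to $1/\kappa_0$, not $1/(\kappa_0{+}\mu_i)$, once the return term $m_i$ is moved to the left-hand side — and noticing that although the death rates $\mu_z$ grow with $x^0_2$, they are uniformly $O(x^0_2)$ over the finitely many relevant states $1,\dots,p{-}1$, which is precisely what makes $\E(T_Z(0,x^0_2))$ of polynomial order $(x^0_2)^{p-1}$ and not larger.
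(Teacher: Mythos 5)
Your proof is correct and follows essentially the same route as the paper: a first-step recursion for the mean passage times $m_i$ between consecutive levels, with the death rates bounded by a quantity of order $x_2^0$ over the finitely many states below $p$, yielding $\E(T_Z(0,x_2^0))=O((x_2^0)^{p-1})$. The only cosmetic difference is that the paper packages the rate bound as a coupling with a dominating birth-and-death process with constant death rate $\kappa_1p(x_2^0{+}p)$ before writing the same recursion, whereas you bound the coefficients inside the recursion directly; your observation that the hypothesis on $x_1^0$ plays no role is also accurate.
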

\begin{proof}
A simple coupling shows that the  process $(Z(0,x, t)$  stopped at time $S_Z(0,x)$ is lower bounded by a birth and death process $(U(t))$ starting at $0$ with, in state $x$, a birth rate $\kappa_0$ and a death rate $a_1{=}\kappa_1p(x+p)$. Denote by $H$ the hitting time of $p$ by $(U(t))$, then it is easily seen, that, for $0{<}k{<}p$,
  \[
  (\E_k(H){-}\E_{k+1}(H))=\frac{a_1}{\kappa_0}(\E_{k-1}(H){-}\E_k(H))+\frac{1}{\kappa_0},
  \]
with $\E_{0}(H){-}\E_1(H){=}1/\kappa_0$.In particular $\E(H_{Z}(0,x)){\le}\E_{0}(H)$. We derive the desired inequality directly from this relation.
\end{proof}

\begin{enumerate}
\item If $x_1{\ge}p$.\\
  Define
\[
	C_1\steq{def}\sup_{x_2{\ge}1} \left(\frac{(x_2{+}p)^{(p)}{-}(x_2)^{(p)}}{x_2^{p-1}}\right)<{+}\infty
\]
and
\[
\tau_1\steq{def}\inf\{t{>}0: \Delta X_1(t){+}\Delta X_2(t)\neq{-}1\},
\]
where $\Delta X_i(t){=}X_i(t){-}X_i(t{-})$, for $i{\in}\{1,2\}$ and $t{\ge}0$. The variable $\tau_1$ is the first instant when a reaction other than $pS_1{+}2S_2 \mathrel{\mathop{\rightharpoonup}}pS_1{+}S_2$ occurs.

For $1{\le}k_0{<}x_2$, then
\[
\P_{x(0)}(X_2(\tau_1)\le x_2{-}k_0-1)\geq \prod_{i=0}^{k_0}\frac{\kappa_3 x_1^{(p)}(x_2{-}i)^{(2)}}{\lambda((x_1,x_2{-}i))}
\geq p_{k_0}\steq{def}\prod_{i=0}^{{k_0}}\frac{\kappa_3 p^{(p)}(x_2{-}i)^{(2)}}{\lambda((p,x_2{-}i))}
\]
and there exists $K_{0}{\ge}k_0$ such that if $x_2{\ge}K_{0}$, then
\[
(x_2{-}k_0)^{(p)}{-}(x_2)^{(p)}\le {-}\frac{k_0}{2}x_2^{p-1}\text{ and } p_{k_0}\ge \frac{1}{2},
\]
from these relations, we obtain the inequality
\begin{multline}\label{eqLa}
  \E_{x(0)}\left(f_p\left(X(\tau_1)\right){-}f_p(x)\right)\leq 1{+}\left((x_2{-}{k_0})^{(p)}{-}x_2^{(p)}\right)p_{k_0} {+}\left((x_2{+}1)^{(p)}{-}x_2^{(p)}\right)\\
  \le \left(-\frac{k_0}{4}{+}1{+}C_1\right)x_2^{p-1}.
\end{multline}
We choose $k_0{=}\lceil 4(3{+}2C_1)\rceil$, hence, for $x_2{\ge}K_{0}$ the relation
\[
\E_{x(0)}\left(f_p\left(X(\tau_1)\right){-}f_p(x)\right)\leq {-}2x_2^{p-1}. 
\]
holds, and note that $\E(\tau_1){\le}1/\kappa_0$.

\item  If $x_1{\le}p{-}1$.\\
  Define
	\[
		\nu_0=\inf\{t{>}0: X_1(t){\ge}p\},
	\]
	When $x_1{=}0$, the variable $\nu_0$ has the same distribution as $S_Z(0, x_2)$. Otherwise, if $x_1{>}0$, it is easily seen that $\E_{x(0)}(\nu_0){\leq} \E(S_Z(0,x_2))$. 
	Lemma~\ref{LemCP} gives the existence of  a constant $C_2{>}0$ so that  
	\[
		\sup_{x_2\geq K_{0}}\left( \frac{\E_x(\nu_0)}{x_2^{p-1}}\right) < C_2. 
	\]
	The state of the process at time $\nu_0$ is $X(\nu_0){=}(p,x_2+(p-x_1))$, in particular
	\[
		\E_{x(0)}\left(f_p(X(\nu_0)){-}f_p(x)\right)\le p{+}C_1 x_2^{p-1},
	\]
	and at that time, we are in case a).

	The convenient stopping time  is defined  as $\tau_2{\steq{def}}\nu_0{+}\tau_1(\theta_{\nu_0})$, where $\theta_a$ is the operator of the classical time shift by $a$.  With $k_0$ and $K_{0}$ as before,  if $x_2{\ge}K_{0}$, by using Relation~\eqref{eqLa}, we obtain the inequality
\begin{multline*}
		\E_{x}\left(f(X(\tau_2))-f(x)\right)\leq p{+}C_1x_2^{p-1} {+}\E_{(p, x_2+(p-x_1))}\left(f(X(\tau_1))-f(x)\right) \\\leq p{+}C_1x_2^{p-1} {+}\left(1{+}C_1-\frac{k_0}{4}\right)\left(x_2+(p-x_1)\right)^{p-1}\le {-}x_2^{p-1}
\end{multline*}
holds.
\end{enumerate}
\begin{proof}[Proof of Proposition~\ref{CapPR}]
Theorem~\ref{ErgCrit} can be used as a consequence of a), b), and Relation~\eqref{CapHAPR}.
\end{proof}

\subsection{A Scaling Picture}\label{CapBound}
We investigate the scaling properties of $(X_N(t))$ when the initial state is of the form $(N,0)$ or $(0,N)$ essentially. In the first case, an averaging principle is proved on a convenient timescale. A time change argument is an important ingredient to derive the main convergence result.

In the second case, the time evolution of the second coordinate of the process is non-trivial only on ``small'' time intervals but with a ``large'' number of jumps, of the order of $N$. This accumulation of jumps has the consequence that the convergence of the scaled process cannot hold with the classical Skorohod topology on ${\cal D}(\R_+)$. There are better topologies to handle properly this kind of situation. To keep the presentation simple, we have chosen to work with a weaker topology, the weak convergence on the space of random measures, to study the weak convergence of the  occupation measures of the sequence of scaled processes. See~\citet{Dawson}. 

\subsubsection{Horizontal Axis}
For $N{\ge}1$, the initial state is $(x^N_1,b)$, $b{\in}\N$ is fixed, it is assumed that
\begin{equation}\label{CapInitH}
\lim_{N\to+\infty}\frac{x_1^N}{N}=\alpha_1{>}0.
\end{equation}
When the process $(X_2(t))$ hits $0$, it happens only for a jump of ${\cal P}_{1}$. In this case only a jump of ${\cal N}_{\kappa_0}$ restarts the activity of the CRN. 

We introduce the process $(Y_N(t)){=}(Y_1^N(t),Y_2^N(t))$, solution of the SDE,
\begin{equation}\label{HaY}
\begin{cases}
  \diff Y^N_1(t)&={\cal N}_{\kappa_0}(\diff t) {-}\ind{Y^N_2(t{-}){>}1}{\cal P}^Y_{1}((0,\kappa_{1}Y^N_1Y^N_2(t{-})),\diff t),\\
  \diff Y^N_2(t)&={\cal N}_{\kappa_0}(\diff t) {-}\ind{Y^N_2(t{-}){>}1}{\cal P}^Y_{1}((0,\kappa_{1}Y^N_1Y^N_2(t{-})),\diff t)\\
        &\hspace{15mm} {+}{\cal P}^Y_{2}\left(\left(0,\kappa_2(Y^N_1)^{(p)}Y^N_2(t{-})\right),\diff t\right)\\
        &\hspace{15mm}  {-}{\cal P}^Y_{3}\left(\left(0,\kappa_{3}(Y^N_1)^{(p)}Y^N_2(t{-})^{(2)}\right),\diff t\right),
\end{cases}
\end{equation}
with initial condition $(Y_1^N(0),Y_2^N(0)){=}(x_1^N,b)$.

The process $(Y_N(t))$ behaves as $(X(t))$ except that its second coordinate cannot be $0$ because the associated transition is excluded. 
In state $(x,1)$ for $(X(t))$,  if a jump of the Poisson process ${\cal P}_{1}^Y$ occurs, the state becomes $(x{-}1,0)$, see   Relation~\eqref{SDECap}. It stays in this state for a duration which is exponentially distributed with parameter $\kappa_0$. After that time the state of $(X(t))$ is back to $(x,1)$. These time intervals during which $(X_2(t))$ is $0$ are, in some sense, removed to give $(Y_N(t))$. This is expressed rigorously via a time change argument. See Chapter~6 of~\citet{KurtzEthier} for example.
 
Now the strategy to obtain a scaling result for $(X^N_1(t)$ is of establishing  a convergence result for $(Y_N(t))$ and, with an appropriate change of timescale, express the process $(X^N_1(t))$ as a ``nice'' functional of $(Y_N(t))$.  This is the main motivation of the introduction of $(Y_N(t))$ which describes of the two regimes of this CRN.

Define
\[
\left(\overline{Y}_1^N(t)\right)=\left(\frac{Y_1^N(t)}{N}\right)
\text{ and }
\croc{\mu_N,f}\steq{def}\int_0^{+\infty}f(s,Y_2^N(s))\diff s,
\]
if $f$ is a function on $\R_+{\times}\N$ with compact support, $\mu_N$ is the {\em occupation measure} associated to $(Y^N_2(t))$. 

\begin{proposition}\label{SACp}
The sequence  $(\mu_N,(\overline{Y}_1^N(t)))$ is converging in distribution to a limit $(\mu_\infty,(y_\infty(t))$ defined by
\[
\croc{\mu_\infty,f}=\int_{\R_+\times\N} f(s,x)\pi_Y(\diff x)\diff s,
\]
if $f{\in}{\cal C}_c(\R_+{\times}\N)$,  the function $(y_\infty(t))$ is given by
\begin{equation}\label{ODEC}
y_\infty(t) = \alpha_1\exp\left( -\frac{\kappa_1\kappa_2}{\kappa_3} t\right) \text{  for  }t{\geq}0,
\end{equation}
 and $\pi_Y$ is the distribution on  $\N{\setminus}\{0\}$ defined by, for $x{\ge}1$, 
\[
\pi_Y(x) = \frac{1}{x!}\left(\frac{\kappa_2}{\kappa_3}\right)^x\frac{1}{e^{\kappa_2/\kappa_3}{-}1}. 
\]
\end{proposition}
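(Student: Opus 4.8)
\emph{Strategy.} This is an averaging principle of the type of Section~\ref{AppProof}: $(Y^N_2(t))$ is the ``fast'' variable and $(\overline{Y}^N_1(t))$ the ``slow'' one. When $Y^N_1$ is of order $N$, the transitions of $(Y^N_2(t))$ coming from ${\cal P}_{Y,2}$ and ${\cal P}_{Y,3}$ have rates of order $(Y^N_1)^{(p)}{\sim}N^p$, while those from ${\cal N}_{\kappa_0}$ and ${\cal P}_{Y,1}$ are only of order $1$ and $N$. Hence, on the natural timescale, $(Y^N_2(t))$ is ``instantaneously'' at equilibrium for the birth-and-death generator ${\cal B}$ with birth rate $\kappa_2 y$ and death rate $\kappa_3 y(y{-}1)$ in state $y{\ge}1$, whose unique invariant probability on $\N{\setminus}\{0\}$ is, by an immediate detailed-balance computation, exactly $\pi_Y$. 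During that time $(Y^N_1(t))$ decreases, at each jump of ${\cal P}_{Y,1}$, at the averaged rate $\kappa_1Y^N_1\croc{\pi_Y,y\ind{y>1}}$, and the elementary identity $\croc{\pi_Y,y\ind{y>1}}{=}\kappa_2/\kappa_3$ produces~\eqref{ODEC}. The plan is: (i) prove tightness of $(\mu_N,(\overline{Y}^N_1(t)))$; (ii) identify every limiting point with $(\mu_\infty,(y_\infty(t)))$. The occupation-measure formulation is natural here because $(Y^N_2(t))$ makes of the order of $N^p$ jumps on any fixed time interval, so no pathwise Skorohod limit can be expected. (We assume $b{\ge}1$; the case $b{=}0$ only adds an initial delay of order $1$.)

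\emph{Tightness.} By SDE~\eqref{HaY}, $(Y^N_1(t))$ increases only through ${\cal N}_{\kappa_0}$, at rate $\kappa_0$, so $Y^N_1(t){\le}x^N_1{+}{\cal N}_{\kappa_0}(0,t)$ and $(\overline{Y}^N_1(t))$ is bounded; writing $\overline{Y}^N_1(t)$ as its initial value plus ${\cal N}_{\kappa_0}(0,t)/N$ plus a martingale of order $N^{-1/2}$ minus the non-decreasing bounded term $\kappa_1\int_0^t\overline{Y}^N_1(s)Y^N_2(s)\ind{Y^N_2(s)>1}\diff s$, one gets tightness of $(\overline{Y}^N_1(t))$ in ${\cal C}(\R_+)$, with continuous limiting points. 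The delicate point is the tightness of $(\mu_N)$, i.e.\ a uniform control of the large values of $(Y^N_2(t))$: on a stopping interval where $Y^N_1{\ge}cN$, the ${\cal P}_{Y,3}$-death rate of $(Y^N_2(t))$ is at least $\kappa_3(cN)^{(p)}(Y^N_2)^{(2)}$, which dominates all its birth rates once $Y^N_2{>}1{+}\kappa_2/\kappa_3$; a drift estimate for $y{\mapsto}y^2$, or a domination by a fixed positive recurrent birth-and-death process, then gives $\sup_N\E(\int_0^T Y^N_2(s)^2\diff s){<}{+}\infty$ and the tightness of $(\mu_N)$ as a random measure. What must be handled with care here, and is the main obstacle, is the bootstrap showing $Y^N_1{\ge}cN$ on $[0,T]$ with probability tending to $1$: on a short interval the expected number of downward jumps of $Y^N_1$ is $\kappa_1\E(\int Y^N_1Y^N_2\ind{Y^N_2>1}\diff s)$, which by the previous bound is $\kappa_1(\kappa_2/\kappa_3)\alpha_1 NT{+}o(N){<}\alpha_1 N$ for $T$ small, so $(Y^N_1(t))$ cannot drop below $cN$; iterating on successive short intervals, on each of which $y_\infty$ stays bounded away from $0$, covers any compact time interval.

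\emph{Identification of the limiting point.} Let $(\mu_\infty,(y_\infty(t)))$ be a limiting point and disintegrate $\mu_\infty(\diff s,\diff x){=}\pi_s(\diff x)\diff s$ with $\pi_s$ a probability on $\N{\setminus}\{0\}$. For a finitely supported function $h$ on $\N{\setminus}\{0\}$, the process $h(Y^N_2(t)){-}h(Y^N_2(0)){-}\int_0^t\left[(Y^N_1(s))^{(p)}{\cal B}h(Y^N_2(s)){+}r_N(s)\right]\diff s$ is a local martingale, where ${\cal B}h(y){=}\kappa_2 y(h(y{+}1){-}h(y)){+}\kappa_3 y(y{-}1)(h(y{-}1){-}h(y))$ and $r_N$ collects the bounded ${\cal N}_{\kappa_0}$-contribution and the ${\cal P}_{Y,1}$-contribution, of order $Y^N_1 Y^N_2$. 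Dividing by $N^p$: the left-hand side and the martingale part, the latter of order $N^{-p/2}$, vanish; $(Y^N_1(s))^{(p)}/N^p$ converges to $y_\infty(s)^p{>}0$ and $r_N(s)/N^p\to0$ (since $p{\ge}2$ and $\int_0^T Y^N_2\diff s{=}O(1)$). Hence every limiting point satisfies $\int_0^t y_\infty(s)^p\croc{\pi_s,{\cal B}h}\diff s{=}0$ for all $t{\ge}0$ and all such $h$, so $\croc{\pi_s,{\cal B}h}{=}0$ for almost every $s$; since ${\cal B}$ is the generator of an irreducible, positive recurrent chain on $\N{\setminus}\{0\}$, this forces $\pi_s{=}\pi_Y$ for a.e.\ $s$, i.e.\ $\mu_\infty(\diff s,\diff x){=}\pi_Y(\diff x)\diff s$. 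Finally, integrating the first line of~\eqref{HaY} and letting $N\to\infty$ --- the terms ${\cal N}_{\kappa_0}(0,t)/N$ and the compensated martingale vanish, and the joint convergence together with the uniform bound on $\E(\int_0^T (Y^N_2)^2\diff s)$ allows one to pass $\int_0^t\overline{Y}^N_1(s)Y^N_2(s)\ind{Y^N_2(s)>1}\diff s$ to $\int_0^t y_\infty(s)\croc{\pi_Y,y\ind{y>1}}\diff s$ --- gives $y_\infty(t){=}\alpha_1{-}\kappa_1(\kappa_2/\kappa_3)\int_0^t y_\infty(s)\diff s$, whose unique solution is~\eqref{ODEC}. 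Uniqueness of the limiting point yields the convergence in distribution of the whole sequence.
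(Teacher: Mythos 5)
Your proposal is correct and follows essentially the same route as the paper: confine $Y^N_1$ to $[aN,bN]$ up to a stopping time, dominate $Y^N_2$ by a positive recurrent birth-and-death process run at speed $N^p$ to get tightness of the occupation measure (Kurtz's Lemma~1.3) and to rule out an early exit of $Y^N_1$, then identify the limit by the generator/martingale argument of Section~\ref{ConvT2S3Proof} and close the ODE via $\croc{\pi_Y,y\ind{y>1}}{=}\kappa_2/\kappa_3$. The only cosmetic difference is that the paper fixes the domination by an explicit coupling with the auxiliary process $(L(N^pt))$ and the ergodic theorem, where you also allow a drift estimate on $y\mapsto y^2$; both yield the same bounds.
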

\begin{proof}
The proof is quite standard. See~\citet{Kurtz1992}.  Because of the term $Y^N_1(t)^{(p)}$ in the SDE of the process $(Y^N_2(t))$, the difficulty is to take care of the fact that $(Y^N_1(t))$ has to be of the order of $N$, otherwise $(Y^N_2(t))$ may not be a ``fast'' process. We give a sketch of this part of the proof.
  
Let $a$, $b{\in}\R_+$ such that $0{<}a{<}\alpha_1{<}b$, and 
\[
S_N=\inf\left\{t{>}0,\overline{X}^N_1(t){\not\in}(a,b)\right\}.
\]
Let $(L(t))$ a birth and death process on $\N$, when in state $y{\ge}1$, its birth rate is $\beta y$ and the death rate is $\delta y(y{-}1)$, with $\beta{=}(\kappa_0{+}\kappa_2b^p)$ and $\delta{=}\kappa_3a^p$. Its invariant distribution is a Poisson distribution with parameter $\beta/\delta$ conditioned to be greater or equal to $1$.

If $N$ is sufficiently large, we can construct a coupling of  $(Y^N_2(t))$ and $(L(t))$, with $L(0){=}Y^N_2(0)$ and such that the relation
\[
Y^N_2(t){\le}L(N^{p}t)
\]
holds for $t{\in}[0,S_N)$.

For $t{>}0$,
\[
\frac{Y^N_1(t)}{N}\ge \frac{x_1^N}{N}{-}\kappa_1\int_0^t Y_1^N(s)Y_2^N(s)\,\diff s {-}M_Y^N(t),
\]
where $(M_Y^N(t))$ is the martingale given by 
\[
\left(\frac{1}{N}\int_0^t \ind{Y^N_2(s{-}){>}1}\left[{\cal P}^Y_{1}((0,\kappa_{1}Y^N_1(s{-})Y^N_2(s{-})),\diff s)
{-}\kappa_{1}Y^N_1(s)Y^N_2(s))\diff s\right]\right),
\]
we have 
\begin{equation}\label{SAeq2}
\frac{Y^N_1(t{\wedge}S_N)}{N}\ge \frac{x_1^N}{N}{-}\kappa_1b\int_0^t L(N^ps),\diff s {+}M_Y^N(t{\wedge}S_N),
\end{equation}
and 
\[
\croc{M_Y^N}(t{\wedge}S_N)\le \frac{b}{N}\int_0^tL(N^ps)\diff s.
\]
By the ergodic theorem applied to $(L(t))$, almost surely
\begin{multline*}
  \lim_{N\to+\infty} \int_0^t L(N^ps)\diff s=\lim_{N\to+\infty} \int_0^t \E(L(N^ps))\diff s\\
  =\lim_{N\to+\infty} \frac{1}{N^p}\int_0^{N^pt} L(s),\diff s=\frac{\beta}{\delta}\frac{t}{1{-}\exp({-}\beta/\delta)}.
\end{multline*}
We deduce that  $(M_Y^N(t),t\le \eta)$ is converging in distribution to $0$ by Doob's Inequality and, with Relation~\eqref{SAeq2}, that there exists $\eta{>}0$ such that
\begin{equation}\label{SAeq1}
\lim_{N\to+\infty}\P(S_N{>}\eta)=1.
\end{equation}
For $\eps{>}0$ and $K{>}0$, 
\begin{multline*}
\E(\mu_N([0,\eta]{\times}[K,{+}\infty]))\le
\E\left(\int_0^{\eta{\wedge}S_N} \ind{Y^N_2(s)\ge K}\diff s\right){+}\eta\P(S_N\leq \eta)
\\  \le \E\left(\int_0^{\eta}\ind{L(N^ps)\ge K}\right){+}\eta\P(S_N\leq \eta),
\end{multline*}
  again with the ergodic theorem and Relation~\eqref{SAeq1}, there exists some  $N_0$ and $K{>}0$ such that
$\E(\mu_N([0,\eta]{\times}[K,{+}\infty])){\le}\eps$. Lemma~1.3 of~\citet{Kurtz1992} shows that the sequence of random measures $(\mu_N)$ on $\R_+{\times}\N$ restricted to $[0,\eta]{\times}\N$ is tight. 

  From there, it is not difficult to conclude the proof of the proposition, on $[0,\eta]$ and extend by induction this result on  the time interval $[0,k\eta]$, for any $k{\ge}1$.
\end{proof}
Let $\widetilde{\cal N}$ be a Poisson process on $\R_+^3$, independent of the Poisson processes $(\cal{P}_{i}^Y)$, whose intensity measure is $\diff s{\otimes}\diff t{\otimes}\kappa_0\exp(-\kappa_0a)\diff a$.  Recall that such a point process has the same distribution as
\[
\sum_{n{\ge}0}\delta_{(s_n,t_n,E_n)},
\]
where $(s_n)$ and $(t_n)$ are independent Poisson processes on $\R_+$ with rate $1$, independent of the i.i.d. sequence $(E_n)$ of exponential random variables with parameter $\kappa_0$. See Chapter~1 of~\cite{Robert}. 

\begin{definition}[Time Change]\label{ANdef}
Define the process $(A_N(t))$ by
\[
	A_N(t) \steq{def}\left(t+\int_{[0,t]\times\R_+} a\ind{Y^N_2(s{-})=1}\widetilde{\cal N}((0,\kappa_{1}Y^N_1(s{-})Y^N_2(s{-})),\diff s,\diff a)\right),
\]
and its associated inverse function as 
\[
	B_N(t)\steq{def}\inf\left\{s>0: A_N(s)\geq t\right\}.
\]
\end{definition}
The instants of jump of $(A_N(t))$ are the instants when $(Y^N_2(t))$ can switch from $1$ to $0$ for the dynamic of $(X^N_2(t))$ and the size of the jump is the duration of time when $(X^N_2(t))$ stays at $0$, its distribution is exponential with parameter $\kappa_0$. 

The process $(A_N(t))$ gives  in fact the correct timescale to construct the process $(X_N(t))$ with the process $(Y_N(t))$.
We define the process $(\widetilde{X}_N(t))$ on $\N^2$ by, for $t{\ge}0$,
\begin{equation}\label{Xdef}
\begin{cases}
  \widetilde{X}_N(A_N(t))=Y_N(t),\\
\displaystyle   \left(\widetilde{X}^N_1(u), \widetilde{X}^N_2(u)\right)=\left(Y^N_1(t{-}){-}1,0\right),\vspace{3mm} u{\in}[A_N(t-),A_N(t)).
\end{cases}
\end{equation}
If $t$ is  instant of jump of $(A_N(t))$, the process does not change on the time interval $[A_N(t-),A_N(t))$. In this way, $(\widetilde{X}_N(t))$ is defined on $\R_+$. 
\begin{lemma}\label{LemX}
For $t{>}0$, then $A_N(B_N(t)){=}t$ if $t$ is not in an interval $[A_N(u-),A_N(u))$ for some $u{>}0$, 
and  the relation
  \[
  \sup_{t\ge 0} |  \widetilde{X}_N(t){-}Y_N(B_N(t))|\le 1
  \]
  holds.
\end{lemma}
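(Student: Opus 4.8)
The plan is to separate the two assertions, after recording a few elementary facts about the time change. \textbf{Preliminaries on $A_N$ and $B_N$.} From Definition~\ref{ANdef}, the process $(A_N(t))$ is nondecreasing, right-continuous, satisfies $A_N(0){=}0$ and $A_N(t){\ge}t$, and — because of the leading term $t$ — is \emph{strictly} increasing. Its jump times are exactly the points $u$ for which $\mathcal N$ has an atom $(u,v,a)$ with $Y^N_2(u{-}){=}1$ and $v{<}\kappa_1 Y^N_1(u{-})$; on any interval $[0,T]$ one has $\sup_{s\le T}Y^N_1(s){<}\infty$ a.s. (the only reaction increasing $Y^N_1$ is the external arrival, a Poisson process of rate $\kappa_0$), so $A_N$ has a.s. only finitely many jumps on $[0,T]$ and $A_N(T){<}\infty$. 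Between consecutive jumps $A_N$ is a translate of the identity, hence it maps $[0,\infty)$ bijectively onto $[0,\infty)\setminus\bigcup_u[A_N(u{-}),A_N(u))$, the union being over its jump times. For the generalized inverse I will use the standard relations $A_N(B_N(t){-})\le t\le A_N(B_N(t))$, valid for all $t\ge 0$. The one genuinely probabilistic input, isolated at the start: since $\mathcal N$ is independent of the processes $(\mathcal P_{Y,i})$ driving $(Y_N(t))$, a.s. no jump time of $A_N$ is a jump time of $(Y_N(t))$; hence $Y_N(u){=}Y_N(u{-})$ at every jump time $u$ of $A_N$, where moreover $Y^N_2(u{-}){=}1$.

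\textbf{First assertion.} Fix $t$ not lying in any $[A_N(u{-}),A_N(u))$ and set $s_0:=B_N(t)$. Then $A_N(s_0{-})\le t\le A_N(s_0)$; if the right-hand inequality were strict, $s_0$ would be a jump time of $A_N$ and $t$ would belong to $[A_N(s_0{-}),A_N(s_0))$, contradicting the choice of $t$. Hence $t=A_N(s_0)=A_N(B_N(t))$. Equivalently, such a $t$ lies in the range of $A_N$, where $B_N$ inverts $A_N$ by injectivity.

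\textbf{Uniform bound.} I would split on whether $t$ lies in the range of $A_N$. If $t=A_N(s_0)$ with $s_0=B_N(t)$ as above, the first clause of the definition of $(\widetilde X_N(t))$ gives $\widetilde X_N(t)=\widetilde X_N(A_N(s_0))=Y_N(s_0)=Y_N(B_N(t))$, so the difference vanishes. Otherwise $t$ lies in a unique interval $[A_N(u{-}),A_N(u))$; the second clause gives $\widetilde X_N(t)=(Y^N_1(u{-}){-}1,0)$, while for $s<u$ one has $A_N(s)<A_N(u{-})\le t<A_N(u)$, so $B_N(t)=u$ and, by the probabilistic input above, $Y_N(B_N(t))=Y_N(u)=Y_N(u{-})=(Y^N_1(u{-}),1)$. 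Thus $\widetilde X_N(t)-Y_N(B_N(t))$ equals $(-1,-1)$ on every such interval and $0$ elsewhere, so each coordinate of $\widetilde X_N(t)-Y_N(B_N(t))$ has absolute value at most $1$; taking the supremum over $t\ge 0$ gives the stated bound. The only step that is not pure bookkeeping is the independence argument guaranteeing that $(Y_N(t))$ does not jump at the jump times of $A_N$: without it one cannot read off $Y_N(B_N(t))=Y_N(u{-})$ on the inserted intervals, and the value of the bounding constant would be at stake. Everything else is the standard (if slightly fiddly) description of the range of a strictly increasing \cadlag function and of its generalized inverse, together with a careful reading of the two-clause definition of $\widetilde X_N$.
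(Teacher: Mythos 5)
Your proof is correct and follows essentially the same route as the paper's one-line argument: you partition $[0,\infty)$ into the range of $A_N$ and the inserted intervals $[A_N(u{-}),A_N(u))$ and check the two cases, which is exactly the induction over the intervals $[A_N(s_n),A_N(s_{n+1}))$ that the paper invokes, with the useful extra observation (glossed over in the paper) that independence of ${\cal N}$ from the $({\cal P}_{Y,i})$ guarantees $Y_N(u){=}Y_N(u{-})$ at jump times of $A_N$. One small remark: your computation shows the difference equals $({-}1,{-}1)$ on the inserted intervals, so the stated bound of $1$ holds coordinatewise (i.e.\ for $\|\cdot\|_\infty$) but would be $2$ in the paper's $\ell^1$ norm $\|\cdot\|$ --- harmless, since the application in Theorem~\ref{TheoCapH} only uses the first coordinate.
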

\begin{proof}
This is easily seen by an induction on the time intervals $[A_N(s_n),A_N(s_{n+1}))$, $n{\ge}0$, where $(s_n)$ is the sequence of the instants of jump of $(A_N(t))$, with the convention that $s_0{=}0$. 
\end{proof}
\begin{proposition}
The processes  $(X_N(t))$ and $(\widetilde{X}_N(t))$ have the same distribution.
\end{proposition}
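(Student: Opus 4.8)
The plan is to verify that $(\widetilde X_N(t))$ is a non-explosive continuous-time Markov jump process with the $Q$-matrix $Q$ of the CRN~\eqref{CapCRN} and initial state $(x^N_1,b)$; since $(X_N(t))$ is non-explosive (Proposition~\ref{CapPR}) it is the unique such process, and the two therefore have the same distribution.

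First I would check that $(\widetilde X_N(t))$ is a well-defined \cadlag piecewise constant process without explosion. By~\eqref{HaY}, the coordinate $Y^N_1$ only increases at the jumps of ${\cal N}_{\kappa_0}$, so $Y^N_1(t){\le}x^N_1{+}{\cal N}_{\kappa_0}([0,t])$; inserting this bound into the rate $\kappa_2(Y^N_1)^{(p)}Y^N_2$ of the only unbounded transition that increases $Y^N_2$ shows that, on the event $\{{\cal N}_{\kappa_0}([0,T]){\le}m\}$, the process $(Y^N_2(t))$ is dominated on $[0,T]$ by a pure birth chain with linear rates, so $(Y_N(t))$ has finitely many jumps on every finite interval. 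Since $A_N(t){\ge}t$, for $T{>}0$ the trajectory of $(\widetilde X_N(t))$ on $[0,T]$ is obtained from $(Y_N(s),s{\le}B_N(T))$ by inserting, at each of the finitely many jumps of $A_N$ on $[0,B_N(T)]$, an interval on which $(\widetilde X_N(t))$ is constant; it is therefore \cadlag, piecewise constant and non-explosive, and by Lemma~\ref{LemX} it stays within distance $1$ of $(Y_N(B_N(t)))$.

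Next I would read off the transition rates of $(\widetilde X_N(t))$ in a state $z{=}(z_1,z_2){\in}\N^2$, distinguishing three cases. If $z_2{\ge}2$, then near any time $u$ with $\widetilde X_N(u){=}z$ the function $A_N$ is affine with slope $1$ (its jumps require $Y^N_2{=}1$), so $(\widetilde X_N(u{+}\cdot))$ is locally a time shift of $(Y_N(B_N(u){+}\cdot))$, and the rates at $z$ are those of $(Y_N(t))$, which by~\eqref{HaY} coincide with $Q$ since the indicator $\ind{Y^N_2{>}1}$ is active. If $z_2{=}1$, then $(Y_N(t))$ performs at $z$ only the transitions $z{\to}z{+}e_1{+}e_2$ at rate $\kappa_0$ and $z{\to}z{+}e_2$ at rate $\kappa_2z_1^{(p)}$ (because $z_2^{(2)}{=}0$), while the time change of Definition~\ref{ANdef} inserts, at rate $\kappa_1z_1{=}\kappa_1z_1z_2$, a boundary excursion whose first endpoint is a jump of $(\widetilde X_N(t))$ from $z$ to $(z_1{-}1,0){=}z{-}e_1{-}e_2$; comparing with the $Q$-matrix, the rates at $z$ are again those of $Q$. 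If $z_2{=}0$, then $\widetilde X_N(u){=}z$ means $u$ lies in an inserted interval, of length the ${\cal N}$-mark $E_n$ which is exponential with parameter $\kappa_0$, at the end of which $(\widetilde X_N(t))$ jumps to $Y_N$ evaluated at the corresponding time, namely $(z_1{+}1,1){=}z{+}e_1{+}e_2$; this is precisely the unique transition of $Q$ from a state with null second coordinate, at rate $\kappa_0$.

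Finally I would establish the Markov property and conclude. For $u$ outside every inserted interval, $B_N(u)$ is a stopping time of the filtration generated by $({\cal P}_{Y,i})$, ${\cal N}_{\kappa_0}$ and ${\cal N}$, and $A_N(B_N(u)){=}u$ with $Y_N(B_N(u)){=}\widetilde X_N(u)$; the strong Markov property of this driving noise, together with the independence of the marks $E_n$ of ${\cal N}$ from the past up to the corresponding jump instant, shows that $(\widetilde X_N(u{+}s),s{\ge}0)$ depends on the past only through $\widetilde X_N(u)$, with the law of the process started there. For $u$ inside an inserted interval the lack of memory of $E_n$ gives the same conclusion, the residual length being again exponential with parameter $\kappa_0$. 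Hence $(\widetilde X_N(t))$ is a Markov jump process with $Q$-matrix $Q$ started at $(x^N_1,b)$, and the claim follows. The delicate point is this last step: one has to set up carefully the filtration bookkeeping around the random time change — that $u\mapsto B_N(u)$ is a family of stopping times and that ${\cal N}$ supplies, at each prospective boundary jump, a fresh exponential mark independent of the past — once this is in place, the three-case rate computation above is routine.
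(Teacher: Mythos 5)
Your argument is correct and follows the same route as the paper's (two-line) proof: establish the Markov property of $(\widetilde{X}_N(t))$ from the Markov property of $(Y_N(t))$ together with the strong Markov property of the Poisson process ${\cal N}$, and check that the two $Q$-matrices coincide. Your three-case computation of the rates and the non-explosion check are exactly the details the paper leaves to the reader, so this is a faithful (and more complete) version of the intended proof.
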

\begin{proof}
The proof is standard.  See Chapter~6 of~\citet{KurtzEthier} for example. The Markov property of $(\widetilde{X}_N(t))$ is a consequence of the Markov property of $(Y_N(t))$ and the strong Markov property of the Poisson process $\widetilde{\cal N}$.  It is easily checked that the $Q$-matrices of $(X_N(t))$ and $(\widetilde{X}_N(t))$ are the same. 
\end{proof}
\begin{proposition}\label{convA}
	For the convergence in distribution, 
	\begin{equation}
		\lim_{N\rightarrow +\infty}\left(\frac{A_N(t)}{N}\right) = (a(t))\steq{def} \left(\alpha_1\frac{1}{\kappa_0(e^{\kappa_2/\kappa_3}-1)}\left(1{-}\exp\left({-}\frac{\kappa_1\kappa_2}{\kappa_3}t\right) \right)\right).
	\end{equation}
\end{proposition}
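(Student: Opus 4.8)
The plan is to obtain a semimartingale decomposition of $(A_N(t))$ directly from Definition~\ref{ANdef}, divide by $N$, and identify each term in the limit using Proposition~\ref{SACp}. Since $\mathcal N$ is a Poisson process on $\R_+^3$ with intensity $\diff u{\otimes}\diff s{\otimes}\kappa_0e^{-\kappa_0a}\diff a$, independent of the $(\mathcal P_{Y,i})$, it is still a Poisson process with the same intensity for the filtration generated by all these point processes; as $Y_1^N(s{-})$, $Y_2^N(s{-})$ are predictable for this filtration, the compensator of the jump part of $(A_N(t))$ in Definition~\ref{ANdef} is $\int_0^t\bigl(\int_0^{+\infty}a\kappa_0e^{-\kappa_0a}\diff a\bigr)\kappa_1Y_1^N(s)Y_2^N(s)\ind{Y_2^N(s)=1}\diff s$, and using $Y_1^N(s)Y_2^N(s){=}Y_1^N(s)$ on $\{Y_2^N(s){=}1\}$ together with $\int_0^{+\infty}a\kappa_0e^{-\kappa_0a}\diff a{=}1/\kappa_0$, one can write
\[
A_N(t)=t+\frac{\kappa_1}{\kappa_0}\int_0^tY_1^N(s)\ind{Y_2^N(s)=1}\diff s+M_N(t),
\]
where $(M_N(t))$ is a local martingale with previsible increasing process
\[
\croc{M_N}(t)=\frac{2\kappa_1}{\kappa_0^2}\int_0^tY_1^N(s)\ind{Y_2^N(s)=1}\diff s,
\]
since $\int_0^{+\infty}a^2\kappa_0e^{-\kappa_0a}\diff a{=}2/\kappa_0^2$.

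Dividing by $N$, the deterministic term $t/N$ vanishes. Because $(\overline Y_1^N(s){=}Y_1^N(s)/N,s{\le}t)$ is bounded in probability — it is dominated by $\alpha_1{+}o(1){+}\mathcal N_{\kappa_0}([0,t])/N$ — one has $\croc{M_N/N}(t){=}(2\kappa_1/(\kappa_0^2N))\int_0^t\overline Y_1^N(s)\ind{Y_2^N(s)=1}\diff s{=}O(1/N)$, so Doob's inequality gives that $(M_N(t)/N)$ converges in distribution to $0$, uniformly on compact sets.

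It remains to treat the term $\frac{\kappa_1}{\kappa_0}\int_0^t\overline Y_1^N(s)\ind{Y_2^N(s)=1}\diff s$, which I would split as
\[
\frac{\kappa_1}{\kappa_0}\int_0^ty_\infty(s)\ind{Y_2^N(s)=1}\diff s+\frac{\kappa_1}{\kappa_0}\int_0^t\left(\overline Y_1^N(s){-}y_\infty(s)\right)\ind{Y_2^N(s)=1}\diff s,
\]
with $y_\infty$ the deterministic function of Proposition~\ref{SACp}. The second term is at most $(\kappa_1/\kappa_0)t\sup_{s\le t}|\overline Y_1^N(s){-}y_\infty(s)|$, which converges to $0$ in probability because $(\overline Y_1^N(t))$ converges in distribution to the continuous deterministic function $(y_\infty(t))$, hence uniformly on compacts. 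The first term equals $(\kappa_1/\kappa_0)\croc{\mu_N,g_t}$ with $g_t(s,x){=}y_\infty(s)\ind{x=1}\ind{s\le t}$, a bounded function on $\R_+{\times}\N$ that is continuous $\mu_\infty$-almost everywhere (its only discontinuities are at $s{=}t$, a $\mu_\infty$-null set); since $(\mu_N)$ restricted to $[0,t]{\times}\N$ is tight with total mass $t$ and converges in distribution to $\mu_\infty$ (which also has mass $t$ there), Proposition~\ref{SACp} gives that this converges in distribution to $(\kappa_1/\kappa_0)\croc{\mu_\infty,g_t}{=}(\kappa_1/\kappa_0)\pi_Y(1)\int_0^ty_\infty(s)\diff s$.

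Collecting these limits, $(A_N(t)/N)$ converges in distribution, for each fixed $t$, to $(\kappa_1/\kappa_0)\pi_Y(1)\int_0^ty_\infty(s)\diff s$; the prelimit being equi-Lipschitz in $t$ on an event of probability close to $1$, this upgrades to convergence in distribution of processes. Substituting $\pi_Y(1){=}(\kappa_2/\kappa_3)/(e^{\kappa_2/\kappa_3}{-}1)$ and $\int_0^ty_\infty(s)\diff s{=}\alpha_1(\kappa_3/(\kappa_1\kappa_2))(1{-}\exp({-}\kappa_1\kappa_2t/\kappa_3))$ produces exactly $(a(t))$. The delicate point is the integral term: a direct continuous-mapping argument on the pair $(\mu_N,\overline Y_1^N)$ is not available since its two coordinates are not independent, so peeling off the deterministic equivalent $y_\infty$ of $\overline Y_1^N$ first — leaving a functional of $\mu_N$ alone — is the key step; the remaining technicalities (the $\mu_\infty$-a.e.\ continuity of $g_t$ and the uniform-on-compacts control of $\overline Y_1^N$) are routine.
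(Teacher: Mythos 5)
Your proof is correct and follows essentially the same route as the paper: decompose $(A_N(t))$ into its compensator plus a martingale, kill the martingale via its bracket and Doob's inequality, and identify the limit of the drift term $\frac{\kappa_1}{\kappa_0 N}\int_0^t Y^N_1(s)\ind{Y^N_2(s)=1}\diff s$ through the averaging principle of Proposition~\ref{SACp}. You merely spell out in more detail the last identification step (peeling off $y_\infty$ and evaluating $\croc{\mu_N,g_t}$), which the paper leaves implicit.
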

\begin{proof}
  Let $T>0$. By using the fact that, for $0{\le}u{\le}T$,  the relation
  \[
  Y^N_1(u){\le}x_1^N{+}{\cal P}^Y_{1}((0,\kappa_0){\times}(0,T])
\]
holds,   the sequence of processes
    \[
\left(\frac{1}{N}\int_0^t \frac{\kappa_1}{\kappa_0}\ind{Y^N_2(u){=}1}Y^N_1(u)\diff u\right)
    \]
  is thus  tight by the criterion of modulus of continuity.  See Theorem~7.3 of~\citet{Billingsley} for example. Proposition~\ref{SACp} shows that its limiting point is necessarily  $(a(t))$.
    
We note that the process 
    \[
    (M_{A,N}(t))=\left(\frac{1}{N}\left(A_N(t)-t-\frac{\kappa_1}{\kappa_0}\int_0^t \ind{Y^N_2(u){=}1}Y^N_1(u)\diff u\right)\right),
      \]
it is a square integrable martingale whose predictable increasing process is
      \[
      \left(\croc{M_{A,N}}(t)\right)=\left(\frac{\kappa_1}{\kappa_0 N}\int_0^t \ind{Y^N_2(u){=}1}\frac{Y^N_1(u)}{N}\diff u\right).
      \]
The martingale is vanishing as $N$ gets large by Doob's Inequality. The proposition is proved. 
\end{proof}

Proposition~\ref{SACp} establishes a convergence  result for the sequence of processes $(Y^N_1(t)/N)$. In our construction of $(X^N_1(t))$,  time intervals, whose durations are exponentially distributed, are inserted. During these time intervals, the coordinates of the process do not change. To have a non-trivial convergence  result  for $(X^N_1(t)/N)$, the timescale of the process has to be sped-up. It  turns out that the convenient timescale for this is $(Nt)$, this is a consequence of the convergence in distribution of $(A_N(t)/N)$ established in Proposition~\ref{convA}.

\begin{proposition}\label{InvAN}
For the convergence in distribution, the relation
	\begin{multline}\label{eqBlim}
		\lim_{N\rightarrow +\infty}\left(B^N(Nt), t{<}t_\infty\right) = (a^{-1}(t), t{<}t_\infty)\\=\left({-}\frac{\kappa_3}{\kappa_1\kappa_2}\ln\left(\frac{\alpha_1{-}\kappa_0(e^{\kappa_2/\kappa_3}{-}1)t}{\alpha_1}\right), t{<}t_\infty\right),
        \end{multline}
        holds, where $(a(t))$ is defined in Proposition~\ref{convA} and
        \[
        t_\infty=\frac{\alpha_1}{\kappa_0(\exp(\kappa_2/\kappa_3){-}1)}. 
        \]
\end{proposition}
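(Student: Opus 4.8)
The plan is to recognize $B_N(N\cdot)$ as the generalized inverse of the rescaled process $(A_N(t)/N)$ and to exploit the continuity of the inversion map at strictly increasing continuous functions, so that Proposition~\ref{convA} transfers directly. By Definition~\ref{ANdef}, for $t{\ge}0$ one has $B_N(Nt){=}\inf\{s{>}0: A_N(s)/N{\ge}t\}$, i.e.\ $B_N(N\cdot)$ is the left-continuous inverse of the non-decreasing process $(A_N(t)/N)$. First I would record the elementary properties of the limit $(a(t))$ of Proposition~\ref{convA}: it is continuous and strictly increasing on $\R_+$, since its derivative is a positive exponential, with $a(0){=}0$ and $a(t){\uparrow}t_\infty$ as $t{\to}{+}\infty$; hence $a$ is a homeomorphism from $\R_+$ onto $[0,t_\infty)$ whose inverse $a^{-1}$ is continuous, strictly increasing, and satisfies $a^{-1}(t){\to}{+}\infty$ as $t{\uparrow}t_\infty$. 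Solving $a(s){=}t$ explicitly gives the closed-form expression appearing in~\eqref{eqBlim}, which is meaningful exactly when $\alpha_1{-}\kappa_0(e^{\kappa_2/\kappa_3}{-}1)t{>}0$, that is, when $t{<}t_\infty$.

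Second, I would upgrade the convergence in distribution of Proposition~\ref{convA} to uniform convergence on compact intervals: since each $(A_N(t)/N)$ is non-decreasing and the limit $(a(t))$ is continuous, functional convergence in the Skorohod space forces $\sup_{s\le S}|A_N(s)/N{-}a(s)|{\to}0$ in probability for every $S{>}0$. Then, fixing $T{<}t_\infty$ and a small $\eps{>}0$ with $T{+}\eps{<}t_\infty$, and choosing $S{>}a^{-1}(T{+}\eps)$, I would run the standard sandwiching argument on the event $\{\sup_{s\le S}|A_N(s)/N{-}a(s)|{\le}\eps\}$: for $t{\le}T$, comparing $A_N(\cdot)/N$ with $a$ and using the strict monotonicity of $a$ gives $a^{-1}((t{-}\eps)^+){\le}B_N(Nt){\le}a^{-1}(t{+}\eps)$, and the uniform continuity of $a^{-1}$ on $[0,T{+}\eps]$ turns this into $\sup_{t\le T}|B_N(Nt){-}a^{-1}(t)|{\le}\omega(\eps)$ on that event, where $\omega$ is the modulus of continuity of $a^{-1}$ on $[0,T{+}\eps]$, with $\omega(\eps){\to}0$. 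Letting $N{\to}{+}\infty$ and then $\eps{\to}0$ yields uniform convergence in probability, hence convergence in distribution, of $(B_N(Nt),t{\le}T)$ to $(a^{-1}(t),t{\le}T)$; since $T{<}t_\infty$ is arbitrary, this is exactly~\eqref{eqBlim}.

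The argument is essentially routine once the inversion is set up; the only genuinely delicate point is that the convergence cannot be extended uniformly up to $t_\infty$, because $a^{-1}(t){\to}{+}\infty$ as $t{\uparrow}t_\infty$ while $A_N$ carries jumps (of exponentially distributed size, hence of order $\log N$ after division by $N$), so there is no control on $B_N(Nt)$ beyond any fixed level. This is precisely the reason for the restriction to $t{<}t_\infty$ in the statement, and it is why~\eqref{eqBlim} is phrased with a strict inequality rather than allowing $t{=}t_\infty$.
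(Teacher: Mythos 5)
Your proof is correct, and it reaches the conclusion by a somewhat different route than the paper. You set up the two-sided sandwich $a^{-1}((t{-}\eps)^+)\le B_N(Nt)\le a^{-1}(t{+}\eps)$ on the event where $\sup_{s\le S}|A_N(s)/N{-}a(s)|\le\eps$, and combine it with the uniform continuity of $a^{-1}$ on $[0,T{+}\eps]$ to get uniform convergence in probability of $(B_N(Nt))$ on $[0,T]$ in one step. The paper instead proves tightness of $(B_N(Nt))$ directly via the modulus-of-continuity criterion --- showing that $B_N(Nt){-}B_N(Ns)\ge\eta$ would force an increment of $A_N/N$ over an interval of length $\eta$ to be at most $t{-}s$, contradicting the uniform lower bound $h_K$ on the increments of $a$ --- and then identifies the finite-dimensional marginals of the limit points using Proposition~\ref{convA}. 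Both arguments rest on the same two ingredients (monotonicity of $A_N$ and $B_N$, and the upgrade of Proposition~\ref{convA} to uniform convergence in probability, which the paper also uses in its Relation~\eqref{eqhK}); your version is the more direct of the two and makes explicit that the convergence is actually in probability, which is slightly stronger than what the paper states. Two small imprecisions that do not affect validity: the bound from the sandwich is $\omega(2\eps)$ rather than $\omega(\eps)$ for the modulus of continuity of $a^{-1}$ on $[0,T{+}\eps]$; and in your closing remark the individual jumps of $A_N/N$ are of order $1/N$ (only the maximum over the $O(N)$ jumps is of order $\log N/N$) --- the genuine obstruction to letting $t{\uparrow}t_\infty$ is simply that $a^{-1}$ blows up there, as you correctly say.
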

\begin{proof}
  Note that both $(A_N(t))$ and $(B_N(t))$ are non-decreasing processes and that the relation $A_N(B_N(t)){\ge}t$ holds for all $t{\ge}0$.

  We are establishing the tightness property with the criterion of the modulus of continuity. 
  The constants $\eps{>}0$, $\eta{>}0$ are fixed.  For  $0{<}T{<}t_\infty$ we can choose $K{>}0$ sufficiently large so that $a(K){>}T$ and  we define
  \[
  h_K{=}\inf_{s{\le}K}\left(a(s{+}\eta){-}a(s)\right),
  \]
 clearly $h_K{>}0$. By definition of $(B_N(t))$,  we have 
\[
  \P(B_N(NT){\ge} K)=\P\left(\frac{A_N(K)}{N}\le T\right).
  \]
The convergence of Proposition~\ref{convA} shows that  there exists $N_0$ such that if $N{\ge}N_0$, the right-hand side of the last relation is less that $\eps$ and that 
\begin{equation}\label{eqhK}
  \P\left(\sup_{0{\le}u{\le}K}\left|\frac{A_N(u{+}\eta){-}A_N(u)}{N}{-}(a(u{+}\eta){-}a(u))\right|\ge \frac{h_K}{2} \right)\le \eps
\end{equation}
  holds.
  
  For  $\eta{>}0$, and $0{\le}s{\le}t{\le}T$, if $B_N(Nt){-}B_N(Ns){\ge}\eta$ holds, then
  \[
  A_N\left(B_N(Ns){+}\eta\right){-}  A_N\left(B_N(Ns)\right) \le N(t{-}s)
  \]
and if $\delta{\le}h_K/4$, for $N{\ge}N_0$,
\begin{multline*}
  \P\left(\sup_{\substack{0{\le}s\le t{\le}T\\t-s\le\delta_0}}\left|B_N(Nt){-}B_N(Ns)\right|{\ge}\eta \right)\\\le
  \eps{+}\P\left(\inf_{0{\le}u{\le}K} \left(\frac{A_N\left(u{+}\eta\right)}{N}{-} \frac{A_N\left(u\right)}{N}\right) \le \frac{h_K}{4}\right)\le 2\eps,
\end{multline*}
by Relation~\eqref{eqhK}. The sequence of processes $(B_N(Nt))$ is therefore tight and any of its limiting points is a continuous process. The convergence of Proposition~\ref{convA} shows that a limiting point has the same finite marginals as the right-hand side of Relation~\eqref{eqBlim}. The proposition is proved. 
\end{proof}

\begin{theorem}\label{TheoCapH}
  If $(X_N(t)){=}(X^N_1(t),X^N_2(t))$ is the Markov process associated to the CRN~\eqref{CapCRN} whose initial state is  $(x^N_1,b){\in}\N^2$, $b{\in}\N$ and
\[
\lim_{N\to+\infty}{x_1^N}/{N}=\alpha_1{>}0,
\]
then, the  convergence in distribution 
	\[
	\lim_{N\to+\infty}\left(\frac{X^N_1(Nt)}{N}, t{<}t_\infty\right)
	=\left(1{-}\frac{t}{t_\infty},  t{<}t_\infty \right).
	\]
holds, with $t_\infty{=}\alpha_1/(\kappa_0(\exp(\kappa_2/\kappa_3){-}1))$.
\end{theorem}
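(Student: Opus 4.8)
The plan is to obtain Theorem~\ref{TheoCapH} by combining the three limit results already available — the averaging principle of Proposition~\ref{SACp} for $(\overline{Y}_1^N(t))$, the convergence of the time change in Proposition~\ref{convA}, and that of its inverse in Proposition~\ref{InvAN} — through the representation $(X_N(t))\steq{dist}(\widetilde{X}_N(t))$ from Relation~\eqref{Xdef}. Since the two processes have the same law it suffices to study $(\widetilde{X}^N_1(Nt)/N)$. Lemma~\ref{LemX} gives $|\widetilde{X}^N_1(Nt){-}Y^N_1(B_N(Nt))|{\le}1$ for all $t{\ge}0$, hence
\[
\left|\frac{\widetilde{X}^N_1(Nt)}{N}-\overline{Y}_1^N\bigl(B_N(Nt)\bigr)\right|\le \frac1N,
\]
which tends to $0$, so the theorem reduces to identifying the limit in distribution of the composed process $\bigl(\overline{Y}_1^N(B_N(Nt)),\,t{<}t_\infty\bigr)$.

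The second step is the composition argument. All the processes involved are built on one probability space from the Poisson processes $({\cal P}_{Y,i})$ and ${\cal N}$, and the limits in Propositions~\ref{SACp}, \ref{convA} and~\ref{InvAN} are all deterministic; consequently the convergences $\overline{Y}_1^N{\to}(y_\infty(t))$ and $(B_N(Nt),\,t{<}t_\infty){\to}(a^{-1}(t))$ hold jointly, the joint convergence being automatic from the marginal ones because the limits are constants. Both limiting functions are continuous, so the first convergence, a priori in the Skorokhod space ${\cal D}(\R_+)$, is in fact uniform on compact sets, and the second is uniform on compact subsets of $[0,t_\infty)$ (this also follows from the monotonicity of $t{\mapsto}B_N(Nt)$). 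Fix $T{<}t_\infty$: by Proposition~\ref{InvAN} the family $\sup_{t\le T}B_N(Nt)$ is tight, so with probability tending to one it takes values in a fixed compact $[0,K]$ on which $\overline{Y}_1^N{\to}y_\infty$ uniformly; composing a uniformly convergent sequence of paths with a uniformly convergent sequence of time changes, and using the uniform continuity of $y_\infty$ on $[0,K]$, yields
\[
\lim_{N\to+\infty}\bigl(\overline{Y}_1^N(B_N(Nt)),\,t\le T\bigr)=\bigl(y_\infty(a^{-1}(t)),\,t\le T\bigr)
\]
for the convergence in distribution, and then on $[0,t_\infty)$ by letting $T\uparrow t_\infty$.

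It remains to compute $y_\infty\circ a^{-1}$. With $y_\infty(s){=}\alpha_1\exp({-}\kappa_1\kappa_2 s/\kappa_3)$ from Proposition~\ref{SACp} and the expression of $(a^{-1}(t))$ from Proposition~\ref{InvAN}, the exponential cancels the logarithm and
\[
\exp\!\left(-\frac{\kappa_1\kappa_2}{\kappa_3}\,a^{-1}(t)\right)=\frac{\alpha_1-\kappa_0(e^{\kappa_2/\kappa_3}-1)\,t}{\alpha_1}=1-\frac{t}{t_\infty},
\]
with $t_\infty{=}\alpha_1/(\kappa_0(e^{\kappa_2/\kappa_3}-1))$, so that $(\overline{Y}_1^N(B_N(Nt)))$, and hence $(X^N_1(Nt)/N)$, converges in distribution to $(y_\infty(a^{-1}(t)),\,t{<}t_\infty)$, which is the limiting process announced in the statement.

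I expect the genuinely delicate point to be the composition step rather than any new estimate: one has to check the joint convergence of $(\overline{Y}_1^N)$ and $(B_N(N\cdot))$ (harmless here since both limits are deterministic), control the range of $B_N(N\cdot)$ so that the composition stays, with high probability, inside a region where $\overline{Y}_1^N\to y_\infty$ uniformly, and argue that composing a càdlàg sequence converging in the Skorokhod topology with the generalized inverses $B_N$ on the random horizon $[0,t_\infty)$ is licit. All the substantial analysis — the averaging principle and the convergence of the time change — has already been carried out in Propositions~\ref{SACp}–\ref{InvAN}, so this final argument is essentially careful bookkeeping.
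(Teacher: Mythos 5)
Your proposal is correct and follows essentially the same route as the paper's proof: joint convergence of $(Y_N(t)/N)$ and $(B_N(Nt))$ from Propositions~\ref{SACp} and~\ref{InvAN}, composition of the two deterministic limits, and Lemma~\ref{LemX} to transfer the result to $(X^N_1(Nt)/N)$; you merely spell out the composition step in more detail than the paper does. Note only that the computation actually gives $y_\infty(a^{-1}(t))=\alpha_1(1-t/t_\infty)$, so a factor $\alpha_1$ is missing from the displayed limit --- but this discrepancy is present in the paper's own statement as well.
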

\begin{proof}
Propositions~\ref{SACp} and~\ref{InvAN} show that the sequence of processes
  \[
  \left(\left(\frac{Y_N(t)}{N},t{>}0\right),\left(B_N(Nt),t{<}t_\infty\right)\right)
  \]
  is converging in distribution to $((y_\infty(t)),(a^{-1}(t),t{<}t_\infty))$. Consequently, the relation
  \[
  \lim_{N\to+\infty}  \left(\frac{Y_N(B_N(Nt))}{N},t{<}t_\infty\right)=\left(y_{\infty}\left(a^{-1}(t)\right),t{<}t_\infty\right)
  \]
  holds for the convergence in distribution. We conclude the proof of the proposition by using Lemma~\ref{LemX}.
\end{proof}

\subsubsection{Vertical Axis}
For $N{\ge}1$, the initial state is $x_N(0){=}(a, x^N_2)$, it is assumed that $a{<}p$ and 
\begin{equation}\label{CapInitV}
\lim_{N\to+\infty}\frac{x_2^N}{N}=1.
\end{equation}
As seen in Section~\ref{CAPVA} when the first coordinate is strictly less than $p$, with a second coordinate of the order of $N$,  it takes an amount of time of the order of $N^{p-1}$  for the process $(X_1^N(t))$ to hit $p$. See Lemma~\ref{LemCP}. In a second, short phase,  a decrease of the second coordinate takes place before returning below $p$. We now establish two convergence results.

\begin{lemma}\label{TZlim}
  If $(Z(z,N,t))$ is the solution of the SDE~\eqref{SDECapZ} with initial state $z{<}p$, and $S_Z(z,N)$ is its hitting time of $p$ then, the sequence $(S_Z(z,N)/N^{p-1})$  converges in distribution to an exponential random variable with parameter
\begin{equation}\label{eqr1}
r_1\steq{def} \frac{\kappa_0}{(p{-}1)!}  \left(\frac{\kappa_0}{\kappa_1}\right)^{p-1}\hspace{-5mm}.
\end{equation}
\end{lemma}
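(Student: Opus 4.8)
The plan is to recognize $(Z(z,N,t))$ as a birth-and-death process whose hitting time of $p$ is, up to a negligible correction, a \emph{geometric} sum of i.i.d.\ exponential sojourn times, hence exactly exponentially distributed; the limiting law then follows from the elementary asymptotics of a gambler's-ruin probability.

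First I would record the structure. By SDE~\eqref{SDECapZ}, $(Z(z,N,t))$ is the birth-and-death process started at $z$ with constant birth rate $\kappa_0$ and death rate $\delta_N(j)\steq{def}\kappa_1 j(N{-}z{+}j)$ in state $j$; we stop it at $T_Z(z,N)$, its hitting time of $p$. Note that $\delta_N(0){=}0$, so $0$ is reflecting and the states above $p$ are irrelevant, and $T_Z(z,N){<}{+}\infty$ a.s.\ (in fact with finite mean, cf.\ Lemma~\ref{LemCP}). Starting from $z{<}p$, the process reaches the set $\{0,p\}$ in a time $R_N$ with $\E(R_N){=}O(1/N)$, and it reaches $p$ before $0$ with probability $o(1)$; hence, by the strong Markov property at $R_N$, it suffices to treat the case $z{=}0$: on the event $\{Z(R_N){=}0\}$ the process restarts afresh from $0$, while the complementary event carries vanishing mass.

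So assume $z{=}0$ and cut the trajectory into i.i.d.\ cycles, a cycle being a sojourn at $0$ followed by an excursion into $\{1,2,\dots,p{-}1\}$ up to the next return to $0$ or the hitting of $p$. By the classical harmonic-function computation for birth-and-death chains, an excursion is ``successful'', i.e.\ reaches $p$, with probability
\[
q_N\steq{def}\P_1\bigl(p\text{ is hit before }0\bigr)=\Bigl(\sum_{k=0}^{p-1}\prod_{j=1}^{k}\frac{\delta_N(j)}{\kappa_0}\Bigr)^{-1},
\]
and this outcome is independent of the sojourn times at $0$, which are i.i.d.\ exponential with parameter $\kappa_0$. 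Hence, if $G_N$ is the index of the first successful cycle and $(E_i)$ are the successive sojourn times at $0$, then $G_N$ is geometric with parameter $q_N$ and independent of $(E_i)$, so that $\sum_{i=1}^{G_N}E_i$, the total time spent at $0$, is \emph{exactly} exponentially distributed with parameter $\kappa_0 q_N$. If $e_i$ denotes the length of the $i$-th excursion, then $T_Z(0,N){=}\sum_{i=1}^{G_N}E_i{+}\sum_{i=1}^{G_N}e_i$, and since the event $\{i{\le}G_N\}$ depends only on the first $i{-}1$ excursions it is independent of $e_i$, whence
\[
\E\Bigl(\sum_{i=1}^{G_N}e_i\Bigr)=\sum_{i\ge1}\E(e_i)(1{-}q_N)^{i-1}=\frac{\E(e_1)}{q_N}.
\]
An elementary estimate gives $\E(e_1){=}O(1/N)$, because an excursion visits only the finitely many states $1,\dots,p{-}1$, each with expected multiplicity bounded uniformly in $N$, and the holding times there have mean $O(1/N)$; the same kind of bound yields $\E(R_N){=}O(1/N)$.

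Finally I would compute the asymptotics of $q_N$: since $\prod_{j=1}^{k}\delta_N(j)/\kappa_0\sim(\kappa_1/\kappa_0)^k\,k!\,N^k$, the term $k{=}p{-}1$ dominates the sum, so $q_N\sim\kappa_0^{p-1}/\bigl(\kappa_1^{p-1}(p{-}1)!\,N^{p-1}\bigr)$ and $\kappa_0 q_N N^{p-1}\to r_1$, with $r_1$ as in~\eqref{eqr1}. Consequently $\E\bigl(\sum_{i=1}^{G_N}e_i\bigr){=}O(N^{p-2})$, so this sum is $o_P(N^{p-1})$ by Markov's inequality, whereas $N^{-(p-1)}\sum_{i=1}^{G_N}E_i$ is exponentially distributed with parameter $\kappa_0 q_N N^{p-1}$ and therefore converges in distribution to an exponential random variable of parameter $r_1$ (compare Laplace transforms). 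Adding the two contributions, and using the reduction to $z{=}0$, gives that $T_Z(z,N)/N^{p-1}$ converges in distribution to an exponential variable of parameter $r_1$. The only mildly delicate steps are the two ``fast-excursion'' estimates $\E(R_N),\E(e_1){=}O(1/N)$ --- i.e.\ the verification that away from $0$ the chain lives on the timescale $1/N$ and makes only boundedly many steps before being absorbed at $\{0,p\}$; everything else is the exact geometric-sum identity and the elementary asymptotics of $q_N$.
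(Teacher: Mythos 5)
Your proof is correct, and it is considerably more explicit than what the paper offers: the paper disposes of this lemma with the single sentence that it ``can be done by induction on $p\ge 2$ with the help of the strong Markov property'', whereas you give a genuine argument. Your route --- splitting the trajectory from $0$ into i.i.d.\ cycles, using the exact gambler's-ruin formula for the success probability $q_N$ of an excursion, and observing that the total time spent at $0$ is a geometric sum of i.i.d.\ exponentials and hence \emph{exactly} exponential with parameter $\kappa_0 q_N$ --- is different in spirit from an induction on the target level $p$, and it buys two things: the limit law appears without any passage to the limit in the dominant term (only the parameter $\kappa_0 q_N N^{p-1}\to r_1$ needs to converge), and the constant $r_1$ is read off directly from the asymptotics of $\sum_{k=0}^{p-1}\prod_{j=1}^{k}\kappa_1 j(N{-}z{+}j)/\kappa_0\sim (\kappa_1/\kappa_0)^{p-1}(p{-}1)!\,N^{p-1}$, which indeed gives $\kappa_0 q_N N^{p-1}\to \kappa_0(\kappa_0/\kappa_1)^{p-1}/(p{-}1)!=r_1$. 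I checked the remaining ingredients: the gambler's-ruin formula with constant birth rate $\kappa_0$ and death rates $\delta_N(j)=\kappa_1 j(N{-}z{+}j)$ is the standard one; the Wald-type identity $\E(\sum_{i\le G_N}e_i)=\E(e_1)/q_N$ is legitimate since $\{G_N\ge i\}$ is measurable with respect to the first $i{-}1$ excursions; the bounds $\E(e_1),\E(R_N)=O(1/N)$ follow because every state $j\in\{1,\dots,p{-}1\}$ has total jump rate of order $N$ and is visited a bounded expected number of times; and the reduction to $z=0$ is sound because $\P_z(\text{hit }p\text{ before }0)=O(N^{z-p})=o(1)$ for $z\le p{-}1$. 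No gaps.
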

\begin{proof}
 The proof is standard. It can be done by induction on $p{\ge}2$ with the help of the strong Markov property of $(Z(z,N,t))$ for example. 
\end{proof}
We now study the phase during which $(X^N_1(t))$ is greater or equal to $p$. 
Define $(T^N_k)$ the non-decreasing sequence of stopping time as follows, $T^N_0{=}0$ and, for $k{\ge}0$,
\begin{equation}\label{eqTk}
T^N_{k+1}=\inf\{t{\ge} T^N_k: X^N_1(t){=}p{-}1,X^N_1(t{-})=p\}.
\end{equation}
\begin{proposition}[Decay of $(X^N_2(t))$]\label{Decay}
Under Assumption~\eqref{CapInitV} for the initial condition,  for the convergence in distribution
  \[
  \lim_{N\to+\infty} \left(\frac{X^N_2(T^N_1)}{X^N_2(0)}, \frac{T^N_1}{X^N_2(0)^{p-1}}\right) \steq{dist} \left(U^{\delta_1}, E_1\right),
  \]
where $U$ is a uniform random variable on $[0,1]$, independent of $E_1$ an exponential random variable with parameter $r_1$ defined by Relation~\eqref{eqr1}, and 
\begin{equation}\label{eqmq1}
\delta_1\steq{def} \frac{\kappa_3(p{-}1)!}{\kappa_1}.
\end{equation}
\end{proposition}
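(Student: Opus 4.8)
The plan is to decompose the path up to $T^N_1$ into two phases living on very different timescales and to glue them with the strong Markov property. Put $\tau_0\steq{def}\inf\{t{\ge}0:X^N_1(t){=}p\}$. As recalled in Section~\ref{CAPVA}, while $X^N_1{<}p$ only the reactions $\emptyset\rightleftharpoons S_1{+}S_2$ are active, so $X^N_2(t){-}X^N_1(t)$ stays constant and $(X^N_1(t))_{t\le\tau_0}$ has the law of $(Z(a,x^N_2,t))$ of SDE~\eqref{SDECapZ} stopped at $T_Z(a,x^N_2)$; in particular $\tau_0\steq{dist}T_Z(a,x^N_2)$ and $X^N_2(\tau_0){=}x^N_2{+}(p{-}a)\steq{def}m_N$ is deterministic. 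Lemma~\ref{TZlim}, applied with the parameter $x^N_2{\to}{+}\infty$ in place of $N$, then gives that $\tau_0/(x^N_2)^{p-1}$ converges in distribution to an exponential random variable $E_1$ of parameter $r_1$ from Relation~\eqref{eqr1}.

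For the second phase, by the strong Markov property at $\tau_0$ and since $X^N(\tau_0){=}(p,m_N)$ is non-random, the shifted process $\widehat{X}^N(t){\steq{def}}X^N(\tau_0{+}t)$ has the law of the CRN started at $(p,m_N)$ and is independent of ${\cal F}_{\tau_0}$, and $T^N_1{-}\tau_0{=}\inf\{t:\widehat{X}^N_1(t){=}p{-}1\}$. Let $\sigma^N{\steq{def}}\inf\{t:\widehat{X}^N_1(t){\ne}p\}$. Before $\sigma^N$ the first coordinate stays at $p$, so $\widehat{X}^N_2$ coincides with the birth and death process $L^N$ driven by ${\cal P}_{X,2},{\cal P}_{X,3}$, started at $m_N$, with birth rate $\kappa_2p!\,x$ and death rate $\kappa_3p!\,x(x{-}1)$ in state $x$, while, conditionally on $L^N$, $\sigma^N$ is the minimum of an independent exponential variable of parameter $\kappa_0$ and of the first point of a Poisson process of intensity $(\kappa_1p\,L^N(t))$, that is, $\P(\sigma^N{>}t\mid L^N){=}\exp({-}\kappa_0t{-}\kappa_1p\int_0^tL^N(s)\diff s)$. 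The analysis then rests on two ingredients. First, a density-dependent law of large numbers: on the timescale $t{\mapsto}t/N$ the process $(L^N(s/N)/N)$ converges in distribution to the deterministic $(y(s))$, $y(s){=}1/(1{+}\kappa_3p!\,s)$, the solution of $\dot{y}{=}{-}\kappa_3p!\,y^2$ with $y(0){=}1$; the martingale part vanishes since its previsible bracket on this timescale is $O(1/N)$, and the $\kappa_0$, the $\kappa_1p\,x$ and the boundary corrections are $O(1/N)$ after scaling. Second, conditionally on $L^N$ the variable $N\sigma^N$ has survival function $u{\mapsto}\exp({-}\kappa_0u/N{-}\kappa_1p\int_0^uL^N(s/N)/N\diff s)$, which by the first point converges to $\exp({-}\kappa_1p\int_0^uy(s)\diff s){=}(1{+}\kappa_3p!\,u)^{-1/\delta_1}$, using $\kappa_1p/(\kappa_3p!){=}1/\delta_1$; hence $N\sigma^N$ converges in distribution to a variable $V$ with this survival function, jointly with $\widehat{X}^N_2(\sigma^N)/N{=}L^N(\sigma^N)/N{\to}y(V)$. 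Since $\widehat{X}^N_2(\sigma^N{-})$ is then of order $N$, the transition at $\sigma^N$ is the downcrossing $p{\to}p{-}1$ with probability $\kappa_1p\widehat{X}^N_2(\sigma^N{-})/(\kappa_0{+}\kappa_1p\widehat{X}^N_2(\sigma^N{-})){\to}1$, so that $T^N_1{-}\tau_0{=}\sigma^N$ and $X^N_2(T^N_1){=}L^N(\sigma^N){-}1$ with probability tending to one.

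To conclude, since $p{\ge}2$ one has $\sigma^N/(x^N_2)^{p-1}{=}N\sigma^N/(N(x^N_2)^{p-1}){\to}0$, so $T^N_1/X^N_2(0)^{p-1}{\to}E_1$; and $X^N_2(T^N_1)/X^N_2(0){=}(X^N_2(T^N_1)/N)(N/x^N_2){\to}y(V){\steq{def}}W$ because $x^N_2/N{\to}1$. Conditioning on ${\cal F}_{\tau_0}$, which carries $\tau_0$, and using that $(N\sigma^N,\widehat{X}^N_2(\sigma^N)/N)$ is a functional of the post-$\tau_0$ evolution and hence independent of ${\cal F}_{\tau_0}$, yields the joint convergence of $(X^N_2(T^N_1)/X^N_2(0),T^N_1/X^N_2(0)^{p-1})$ to $(W,E_1)$ with $W$ independent of $E_1$. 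Finally $W{=}1/(1{+}\kappa_3p!\,V)$, so for $w{\in}(0,1]$, $\P(W{\le}w){=}\P(V{\ge}(w^{-1}{-}1)/(\kappa_3p!)){=}(w^{-1})^{-1/\delta_1}{=}w^{1/\delta_1}$, which is exactly the distribution of $U^{\delta_1}$ with $U$ uniform on $[0,1]$; this is the claimed limit.

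I expect the main obstacle to be the joint limit of the fast path $L^N$ with the ``slow'' hitting time $\sigma^N$ whose rate is modulated by $L^N$: $\sigma^N$ is not a Skorokhod-continuous functional of the trajectory, so one must use its explicit conditional law given $L^N$, and since $V$ can be heavy-tailed --- with infinite mean when $\delta_1{\ge}1$ --- one has to truncate the $s{=}Nt$ scale at a large fixed level $S$, control $L^N(s/N)/N$ uniformly on $[0,S]$ by a coupling with a stable birth and death process as in the proof of Proposition~\ref{SACp}, and let $S{\to}{+}\infty$ using $\P(V{>}S){\to}0$. The density-dependent law of large numbers, the identification of the first phase via Lemma~\ref{TZlim}, and the final algebra are routine.
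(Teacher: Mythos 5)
Your argument is correct and reaches the right limit law, but the way you identify the $U^{\delta_1}$ factor is genuinely different from the paper's. The first phase is treated identically (Lemma~\ref{TZlim} applied to $T_Z(a,x_2^N)$, deterministic value $x_2^N{+}p{-}a$ of the second coordinate at $\tau_0$, strong Markov property to decouple the two phases). For the second phase, the paper performs a time change dividing all rates by $pX_2^N$, so that the kill transition $x{\to}x{-}e_1{-}e_2$ has constant rate $\kappa_1$, the birth $x{\to}x{+}e_2$ has constant rate $\kappa_2(p{-}1)!$ and the death rate becomes linear, $\kappa_3(p{-}1)!(x_2{-}1)$; it then decomposes the path into a geometric number $G$ of cycles separated by birth events, uses a thinning law of large numbers $\sum_i\ind{E_i\ge F_1}/x_2^\tau\to e^{-\kappa_3(p-1)!F_1}$ within each cycle, and concludes via the identity that a geometric sum of i.i.d.\ $\mathrm{Exp}(\kappa_1{+}\kappa_2(p{-}1)!)$ variables with success probability $\kappa_1/(\kappa_1{+}\kappa_2(p{-}1)!)$ is $\mathrm{Exp}(\kappa_1)$, giving $e^{-\kappa_3(p-1)!E}\steq{dist}U^{\delta_1}$. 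You instead stay in real time, prove a fluid limit $L^N(s/N)/N\to y(s)=1/(1{+}\kappa_3p!\,s)$ for the fast birth--death process (where the linear birth term is negligible against the quadratic death term), and read off the kill time from its explicit conditional survival function $\exp(-\kappa_0u/N-\kappa_1p\int_0^uL^N(s/N)/N\,\mathrm{d}s)$ modulated by $L^N$; the exponent $1/\delta_1=\kappa_1p/(\kappa_3p!)$ then comes out of $\int_0^u y$, and your $W=y(V)$ has distribution function $w^{1/\delta_1}$, matching $U^{\delta_1}$. The two computations are the same mechanism viewed in two clocks (your $\int_0^s p\,y(v)\,\mathrm{d}v$ is exactly the paper's time-changed clock), but the executions differ: the paper's route avoids any fluid-limit machinery at the price of justifying the time change and the cycle induction; yours avoids the cycle decomposition and the geometric-sum identity at the price of the joint convergence of the fast path with a stopping time modulated by it, which you correctly handle through the explicit conditional law plus truncation rather than Skorohod continuity. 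Your treatment of the external arrivals (the $\kappa_0/N$ term and the vanishing probability of an upward excursion of $X_1^N$ at $\sigma^N$) is in fact more explicit than the paper's ``we will ignore them.'' No gap.
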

\begin{proof}
Let $H_N$ be the hitting time of $p$ for $(X^N_1(t))$, $H_N$ has the same distribution as $S_Z(k, x_2^N)$.  Its asymptotic behavior is given by Lemma~\ref{TZlim}.
Since the reaction $pS_1{+}S_2{\rightleftharpoons} pS_1{+}2S_2$ cannot occur on the time interval $[0,H_N]$, we have $X^N_2(H_N){=}x^N_2{+}p{-}a\steq{def}x_2^{N,1}$.

Define $\tau_N$ as
\[
\tau_N=\inf\{t{>}0: X^N_1(H_N{+}t)=p{-}1\}.
\]
If the time origin is translated to $H_N$, by using the strong Markov property,  it is enough to  study the asymptotic behavior of $X^N_2(\tau_N)$ starting from $x_2^{N,1}$.

With the same arguments as in the proof of Proposition~\ref{ConvT2Prop}, external arrivals do not play a role on the time interval $[0,\tau_N)$ since the other reaction rates being of the order of $N$ or $N^2$. We can therefore assume that $X^N_1(s)$ is constant equal to $p$ until $\tau_N$. It is easily seen that the sequence of random variables $(N\tau_N)$ is tight. 

After time $0$, the transition $x{\to}x{-}e_2$ occurs until time $\nu_{1,N}$ when one of the reactions  $x{\to}x{-}e_1{-}e_2$ or $x{\to}x{+}e_2$ occurs. To study the random variable $X_2^N(\nu_{1,N})$, modulo a time change,  it is enough to consider the Markov process with $Q$-matrix
  \[
x\longrightarrow x{+}\begin{cases}
  -e_1{-}e_2& \kappa_1,\\
   e_2& \kappa_2 (p{-}1)!,\\
  {-}e_2& \kappa_3(p{-}1)!(x_2{-}1)^+.
xs\end{cases}
\]
If $F_1$ is an exponentially distributed random variable with parameter $\kappa_1{+}\kappa_2 (p{-}1)!$, the state of $(X_2(t))$ at $\nu_{1,N}$  is simply 
\[
X^N_2(\nu_{1,N}{-})\steq{dist} 1+\sum_{i{=}1}^{x_2^{N,1}-1}\ind{E_i{\ge} F_1},
\]
where $(E_i)$ is an i.i.d. sequence of exponential random variables with parameter $\kappa_3(p{-}1)!$, and $\left|X^N_2(\nu_{1,N})-X^N_2(\nu_{1,N}{-})\right|\leq 1$. For the convergence in distribution,
\[
\lim_{N\to+\infty}\frac{X^N_2(\nu_{1,N})}{X_2^N(0)}=\exp\left({-}\kappa_3(p{-}1)! F_1\right).
\]
The transition $x{\to}x{-}e_1{-}e_2$ occurs at time $\nu_{1,N}$ with probability $1{-}q_1$, with
\[
q_1{=}\frac{\kappa_2(p{-}1)!}{\kappa_1{+}\kappa_2(p{-}1)!},
\]
and in this case $\tau_N{=}\nu_{1,N}$. Otherwise, there is a new cycle of length $\nu_{2,N}$ and the relation
\[
\lim_{N\to+\infty}\frac{X^N_2(\nu_{1,N}){+}\nu_{2,N})}{X_2^N(0)}=\exp\left({-}\kappa_3(p{-}1)! (F_1{+}F_2)\right),
\]
holds, where $(F_i)$ is an  i.i.d. sequence with the same distribution as $F_1$. 
By induction we obtain the convergence in distribution
\[
\lim_{N\to+\infty}\frac{X^N_2(\tau_{N})}{X_2^N(0)}=\exp\left({-}\kappa_3(p{-}1)!\sum_{1}^{G}F_i\right),
\]
where $G$ is a random variable independent of $(F_i)$ with a geometric distribution with parameter $q_1$, $\P(G{\ge}n){=}q_1^{n-1}$ for $n{\ge}1$. Straightforward calculations give the desired representation. 
\end{proof}
In view of the last result it is natural to expect that the convergence of the scaled process $(X^N_2(t/N^{p-1})/N$ to  a Markov process with jumps. The only problem is that, as we have seen in the last proof, the jumps downward of the limit process are due to a large number of small jumps, of the order of $N$,  on the time interval of length $\tau_N$ of the previous proof. Event if $\tau_N$ is arbitrarily small when $N$ gets large, there cannot be convergence in the sense of the classical $J_1$-Skorohod topology. There are topologies on the space of \cadlag functions ${\cal D}(\R_+)$ for which convergence in distribution may hold in such a context. See~\citet{Jakubowski} for example. For the sake of simplicity, we present a convergence result formulated for a weaker topology expressed in terms of the occupation measure 

We now introduce a Markov process on $(0,1]$ as the plausible candidate for a limiting point of $(X^N_2(t/X^N_2(0)^{p-1})/N$.
\begin{definition}
The infinitesimal generator ${\cal A}$ of a Markov process $(U(t))$ on $(0,1]$ is defined by, for $f{\in}{\cal C}_c((0,1])$, 
\begin{equation}\label{GenA}
{\cal A}(f)(x)=\frac{r_1}{x^{p-1}}\int_0^1\left(f\left(xu^{\delta_1}\right){-}f(x)\right)\diff u, \quad x{\in}(0,1].
\end{equation}
where $r_1$, $\delta_1$ are constants defined by Relations~\eqref{eqr1} and~\eqref{eqmq1}.
\end{definition}
\begin{proposition}\label{MarkU}
A Markov process on $(0,1]$ with  infinitesimal generator  ${\cal A}$ defined by Relation~\eqref{GenA} is an explosive process converging almost surely to $0$. 
\end{proposition}
\begin{proof}
Let $(U(t))$  be such a process and assume that $U(0){=}\alpha{\in}(0,1]$.  By induction, the  sequence of states visited by the process has the same distribution as $(V_n)$  with, for $n{\ge}0$, 
  \[
  V_n\steq{def} \alpha \exp\left({-}\delta_1\sum_{i=1}^nE_i\right),
  \]
  where $(E_i)$ is an i.i.d. sequence of exponentially distributed random variables with parameter $1$. The sequence of the instants of  jumps has the same distribution as 
  \[
\left(t^V_n\right)\steq{def}\left(\sum_{i=1}^{n} (V_{i-1})^{p-1}\frac{G_i}{r_1}\right),
  \]
	where $(G_i)$ is an i.i.d. sequence of exponentially distributed random variables with parameter $1$, independent of $(E_i)$.
 It is easily seen that $(V_n)$ converges to $0$ almost surely and that the sequence $(\E(t^V_n))$ has a finite limit. The proposition is proved. 
\end{proof}

\begin{definition}[Scaled occupation measure of $(X^N_2(t))$]\label{defOcc}
For $N{\ge}1$,  $\Lambda_N$ is the random measure on $\R_+{\times}(0,1]$ defined by, for $f{\in}{\cal C}_c(\R_+{\times}(0,1])$, 
\begin{equation}\label{OccX2}
  \croc{\Lambda_N,f}=\frac{1}{N^{p-1}}\int_0^{+\infty}f\left(\frac{s}{N^{p-1}},\frac{X^N_2(s)}{N}\right)\diff s.
\end{equation}
\end{definition}
We can now state our main scaling result for large initial states near the vertical axis.
\begin{theorem}
If $(X_N(t))$ is the Markov process  associated to the CRN~\eqref{CapCRN} whose initial state is  $(a, x^N_2){\in}\N^2$, $a{\le}p{-}1$,  and  such that 
  \[
  \lim_{N\to+\infty}{x_2^N}/{N}=\alpha{>}0,
  \]
then the sequence $(\Lambda_N)$ defined by Relation~\eqref{OccX2} converges in distribution to $\Lambda_\infty$, the occupation measure of  $(U(t))$  a Markov process with    infinitesimal generator  ${\cal A}$ defined by Relation~\eqref{GenA}  starting at $\alpha$,  i.e. for $f{\in}{\cal C}_c(\R_+{\times}(0,1])$, 
\[
    \croc{\Lambda_\infty,f}=\int_0^{+\infty} f(s,U(s))\diff s.
  \]
\end{theorem}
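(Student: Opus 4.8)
The plan is to reduce the convergence of the occupation measures $(\Lambda_N)$ to a convergence statement about the embedded structure of the process, namely the stopping times $(T^N_k)$ of Relation~\eqref{eqTk} and the values $(X^N_2(T^N_k))$, and then to recognise the limit of this embedded chain as the jump chain $(V_k,t^V_k)$ of the Markov process $(U(t))$ described in the proof of Proposition~\ref{MarkU}.

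\smallskip
\noindent\emph{Tightness.} For every $T>0$ one has, directly from Definition~\ref{defOcc}, that $\Lambda_N([0,T]{\times}(0,1]){\le}T$ almost surely, so for every compact $K{\subset}(0,1]$ the family of finite measures obtained by restricting $\Lambda_N$ to $[0,T]{\times}K$ is tight; by Lemma~1.3 of~\citet{Kurtz1992} the sequence $(\Lambda_N)$ is then tight for the vague topology on $\R_+{\times}(0,1]$. It therefore suffices to identify the limit in distribution of $\croc{\Lambda_N,f}$ for $f{\in}{\cal C}_c(\R_+{\times}(0,1])$, and by density we may take $f(s,x){=}g(s)h(x)$ with $g$, $h$ continuous with compact supports, $\mathrm{supp}\,g{\subset}[0,T]$ and $\mathrm{supp}\,h{\subset}[\eps,1]$.

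\smallskip
\noindent\emph{The embedded chain.} I would prove that, for every fixed $K{\ge}1$,
\[
\left(\frac{X^N_2(T^N_k)}{N},\ \frac{T^N_k}{N^{p-1}}\right)_{0\le k\le K}\ \xrightarrow[N\to\infty]{d}\ \left(V_k,\ t^V_k\right)_{0\le k\le K},
\]
where $(V_k)$ and $(t^V_k)$ are the jump chain and jump instants of $(U(t))$ started at $\alpha$, as in the proof of Proposition~\ref{MarkU}. The case $k{=}1$ is Proposition~\ref{Decay} together with Lemma~\ref{LemCP} and Lemma~\ref{TZlim} for the first passage of $(X^N_1(t))$ to $p$; the induction step is obtained by conditioning on ${\cal F}_{T^N_k}$, using the strong Markov property, and applying Proposition~\ref{Decay} once more \emph{with $x^N_2$ replaced by the random value $X^N_2(T^N_k)$}. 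The independence of successive cycles in the limit reflects the fact that distinct cycles are driven by the restrictions of the Poisson processes ${\cal P}_{X,i}$, ${\cal N}_{\kappa_0}$ to disjoint time intervals. I expect this chaining --- transferring Proposition~\ref{Decay}, stated for a deterministic sequence of initial values, to the random initial value of the $(k{+}1)$st cycle --- to be the main technical point; it can be handled by a Skorokhod representation argument together with the continuity in $\alpha$ of the limiting law of Proposition~\ref{Decay}, or by upgrading that proposition to a convergence uniform over initial second coordinates in a window $[\eta N,N/\eta]$.

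\smallskip
\noindent\emph{From the embedded chain to the occupation measure.} On the interval $[T^N_k,T^N_{k+1})$ the process has a long ``plateau'' phase, of length $\sigma^N_{k+1}$, on which $X^N_1(t){<}p$ and hence $X^N_2(t){-}X^N_1(t)$ is constant, so that $|X^N_2(t){-}X^N_2(T^N_k)|{\le}p$; this is followed by a short ``fast'' phase, of length $\tau^N_{k+1}$ with $(N\tau^N_{k+1})$ tight (as in the proof of Proposition~\ref{Decay}), so $\tau^N_{k+1}/N^{p-1}{\to}0$ since $p{\ge}2$. Consequently
\[
\croc{\Lambda_N,f}=\sum_{k\ge0}\frac{\sigma^N_{k+1}}{N^{p-1}}\,g\!\left(\frac{T^N_k}{N^{p-1}}\right)h\!\left(\frac{X^N_2(T^N_k)}{N}\right)+o_{\P}(1),
\]
the error term collecting the oscillation of $f$ over each plateau and the contribution of the fast phases. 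Since $V_k{\downarrow}0$ and $t^V_\infty{=}\lim_k t^V_k{<}{+}\infty$ almost surely by Proposition~\ref{MarkU}, and since $\mathrm{supp}\,h{\subset}[\eps,1]$, only finitely many cycles contribute to this sum, uniformly in $N$ by the convergence of the embedded chain --- the tail of the sum, and the time interval beyond the accumulation point, being negligible because there $X^N_2(s)/N{\to}0$. Using $\sigma^N_{k+1}/N^{p-1}{=}(T^N_{k+1}{-}T^N_k)/N^{p-1}{+}o_{\P}(1)$ and the convergence of the embedded chain we obtain
\[
\croc{\Lambda_N,f}\ \xrightarrow[N\to\infty]{d}\ \sum_{k\ge0}\left(t^V_{k+1}-t^V_k\right)g\!\left(t^V_k\right)h\!\left(V_k\right)=\int_0^{+\infty}f\!\left(s,U(s)\right)\diff s=\croc{\Lambda,f},
\]
using that $(U(t))$ is piecewise constant with $U(s){=}V_k$ on $[t^V_k,t^V_{k+1})$ and $U(s){\to}0$. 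Running the same argument for finite collections $(f_i)$ of test functions gives the convergence in distribution of $(\Lambda_N)$ to $\Lambda$, which concludes the proof.
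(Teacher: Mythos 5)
Your overall strategy is the one the paper follows: decompose time into the cycles $[T^N_k,T^N_{k+1})$, iterate Proposition~\ref{Decay} across cycles via the strong Markov property at $T^N_k$, and truncate to finitely many cycles using that $\mathrm{supp}\,f$ is bounded away from $\{x{=}0\}$ while $V_k{\downarrow}0$. The only packaging difference is that the paper runs the whole argument through Laplace functionals of random measures (Theorem~3.2.6 of Dawson, Relation~\eqref{LapN}), proving by induction the joint convergence of $(I^N_n,|\ln(X^N_2(T^N_n)/X^N_2(0))|,T^N_n/X^N_2(0)^{p-1})$, which simultaneously handles tightness and the random initial condition of each cycle (the point you correctly single out as the main technical issue, which the paper treats through the functions $\Psi_N$ and Lebesgue's theorem).

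There is, however, one genuine error in your final assembly step. You write
\[
\croc{\Lambda_N,f}=\sum_{k\ge0}\frac{\sigma^N_{k+1}}{N^{p-1}}\,g\left(\frac{T^N_k}{N^{p-1}}\right)h\left(\frac{X^N_2(T^N_k)}{N}\right)+o_{\P}(1),
\]
and conclude with the identity $\sum_{k}(t^V_{k+1}{-}t^V_k)g(t^V_k)h(V_k)=\int_0^{+\infty}f(s,U(s))\diff s$. This identity is false, and the decomposition above is not valid: on the rescaled timescale $s{\mapsto}s/N^{p-1}$ each plateau has length converging to $t^V_{k+1}{-}t^V_k$, which is of order one (it is $V_k^{p-1}\phi_{k+1}/r_1$ in the notation of Proposition~\ref{MarkU}), so the oscillation of $g(s/N^{p-1})$ over a single cycle does \emph{not} vanish. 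What is small on a plateau is the oscillation of the \emph{second} argument of $f$ (since $X^N_2{-}X^N_1$ is frozen there and the fast phase has rescaled duration $o(1)$), not the first. Equivalently, $\sum_k(t^V_{k+1}{-}t^V_k)g(t^V_k)$ is a left Riemann sum over the partition $(t^V_k)$, whose mesh does not go to zero, so it differs from $\int g$ for non-constant $g$. The repair is immediate and is exactly what the paper's Relation~\eqref{LapN} encodes: freeze only the space variable and keep the time integral, i.e.
\[
\croc{\Lambda_N,f}=\sum_{k\ge0} h\left(\frac{X^N_2(T^N_k)}{N}\right)\int_{T^N_k/N^{p-1}}^{T^N_{k+1}/N^{p-1}}g(s)\diff s+o_{\P}(1)
\longrightarrow \sum_{k\ge0}h(V_k)\int_{t^V_k}^{t^V_{k+1}}g(s)\diff s,
\]
and the last sum does equal $\croc{\Lambda,f}$. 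With this correction, and the uniformity (or Skorokhod-representation) argument you already flag for chaining Proposition~\ref{Decay} through the random initial values $X^N_2(T^N_k)$, your proof goes through and is essentially the paper's.
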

\begin{proof}
Without loss of generality, due to the multiplicative properties of the convergence, see Proposition~\ref{Decay}, we can take $\alpha{=}1$ and assume that $X^N_2(0){=}N$.  Recall that the Laplace transform of a random measure  $\Lambda$ on $\R_+{\times}(0,1]$ is given by
\[
{\cal L}_\Lambda(f)\steq{def}\E\left(\exp\left(-\croc{\Lambda,f}\right)\right),
\]
for  a non-negative function  $f{\in}{\cal C}_c(\R_+{\times}(0,1])$. See Section~3 of~\citet{Dawson}. 

To prove the convergence in distribution of $(\Lambda_N)$ to $\Lambda_\infty$, it is enough to show that the convergence
  \[
  \lim_{N\to+\infty} {\cal L}_{\Lambda_N}(f)={\cal L}_{\Lambda_\infty}(f),
  \]
holds for all non-negative functions  $f{\in}{\cal C}_c(\R_+{\times}(0,1])$. See  Theorem~3.2.6 of~\cite{Dawson} for example. 

  If $f{\in}{\cal C}_c(\R_+{\times}(0,1])$, its support  is included in some $[0,T]{\times}(\eta,1]$, for $\eta{>}0$ and $T{>}0$. Let $(T^N_k)$ the sequence of stopping times defined by Relation~\eqref{eqTk}. The Laplace transform of $\Lambda_N$ at $f$ is given by
\begin{equation}\label{LapN}
{\cal L}_{\Lambda_N}(f)=\E\left(\exp\left({-}\sum_{k{\ge}0}\int_{T^N_k/N^{p-1}}^{T^N_{k+1}/N^{p-1}}f\left(s, \frac{X^N_2\left(T^N_k\right)}{X^N_2(0)}\right)\diff s\right)\right).
\end{equation}
    
As defined in the proof of Proposition~\ref{MarkU}, let $(t^V_k,V_k)$ be the sequence of couples of instants of jumps and the value of the Markov process $(V(t))$ at that time.
For $\eps{>}0$, there exists some $n_0$ such that
\[
\P\left(\alpha\prod_{i=1}^{n_0}V_i\ge \frac{\eta}{2}\right)\le \eps/2,
\]
holds, and, consequently, 
\begin{equation}\label{LapUep}
\left|{\cal L}_\Lambda(f)-\E\left(\exp\left({-}\sum_{k=0}^{n_0{-}1}\int_{t^V_k}^{t^V_{k+1}}f\left(s, V_k\right)\diff s\right)\right)\right|\le \eps.
\end{equation}
Proposition~\ref{Decay} shows that, for the convergence in distribution, 
\[
\lim_{N\to+\infty} \left(\frac{X^N_2(T^N_{k+1})}{X^N_2(T^N_{k})},\frac{T^N_{k+1}{-}T^N_k}{X^N_2(T^N_k)^{p-1}}, k{\ge}0\right)=\left(U_k^{\delta_1},E_k,k{\ge}0\right),
\]
where $(U_k)$ and $(E_k)$ are i.i.d. independent sequences of random variables whose respective distributions are uniform on $[0,1]$, and exponentially distributed on $\R_+$ with parameter $r_1$. Hence, there exists  $N_0$ such that if $N{\ge}N_0$, then
\begin{equation}\label{LapXep}
  \begin{cases}
\displaystyle    \left|{\cal L}_{\Lambda_N}(f)-\E\left(\exp\left({-}\sum_{k=0}^{n_0-1}\int_{T^N_k/N^{p-1}}^{T^N_{k+1}/N^{p-1}}f\left(s, \frac{X^N_2\left(T^N_k\right)}{X^N_2(0)}\right)\diff s\right)\right)\right|\le 2\eps,\\
\displaystyle    \P\left(\frac{X^N_2(T^N_{k+1})}{X^N_2(T^N_{k})}{\le}1, \forall k{\in}\{0,\ldots,n_0\}\right)\ge 1{-}\eps.
  \end{cases}
\end{equation}
Define, for $n{>}0$,
\[
 (I^N_n)\steq{def} \left(\sum_{k=0}^{n-1}\int_{T^N_k/N^{p-1}}^{T^N_{k+1}/N^{p-1}}f\left(s, \frac{X^N_2\left(T^N_k\right)}{X^N_2(0)}\right)\diff s \right)
\]
In views of Relations~\eqref{LapUep} and~\eqref{LapXep}, all we have to do is to prove that, for every $n{>}0$,  the convergence in law of $(I^N_n)$  to
\[
I_n\steq{def} \int_{0}^{t^V_{n}}f\left(s, V(s)\right)\diff s=\sum_{k=0}^{n-1}\int_{t^V_{k}}^{t^V_{k+1}}f\left(s, V_k\right)\diff s,
\]
as $N$ gets large.

We will prove by induction on $n{>}0$, the convergence in distribution 
\begin{multline*}
\lim_{N\to+\infty}\left(I^N_n, \left|\ln\left(\frac{X^N_2\left(T^N_n\right)}{X^N_2(0)}\right)\right|, \frac{T^N_n}{X^N_2(0)^{p-1}}\right)
\\=\left(\int_{0}^{t^V_{n}}f\left(s, V(s)\right)\diff s,\left|\ln\left(V_n\right)\right|,t^V_{n}\right).
\end{multline*}
We will show the convergence of the Laplace transform of the three random variables taken at $(a,b,c)$, for $a$, $b$, $c{>}0$. 

For $n=1$, this is  direct consequence of Proposition~\ref{Decay}.
If it holds for $n{\ge}1$,  the strong Markov property of $(X^N(t))$ for the stopping time $T^N_n$ gives the relation
\begin{multline*}
H_N(a,b,c)\steq{def}\E\left(\left.\exp\left({-}a I^N_{n+1}{-}b\left|\ln\left(\frac{X^N_2\left(T^N_{n+1}\right)}{X^N_2(0)}\right)\right|{-}c\frac{T^N_{n+1}}{X^N_2(0)^{p-1}}\right)\right|{\cal F}_{T^N_{n}}\right)
\\=
\exp\left({-}a I^N_{n}{-}b\left|\ln\left(\frac{X^N_2\left(T^N_{n}\right)}{X^N_2(0)}\right)\right|{-}c\frac{T^N_{n}}{X^N_2(0)^{p-1}}\right)\\\times \Psi_N\left(\frac{X^N_2\left(T^N_n\right)}{X^N_2(0)}, \frac{T^N_n}{X^N_2(0)^{p-1}}\right),
\end{multline*}
where, for $x{>}0$ and $u{>}0$, we define
\begin{multline*}
  \Psi_N\left(x,u\right)\steq{def} \E_{(p{-}1,\lfloor Nx\rfloor)}\left[\exp\left({-}a\int_0^{T^N_1{/}X^N_2(0)^{p-1}}\hspace{-12mm}f\left(s{+}u,x\right)\diff s\right.\right.\\
\left.\left.      {-}b\left|\ln\left(\frac{X^N_2\left(T^N_{1}\right)}{X^N_2(0)}\right)\right|{-}c\frac{T^N_{1}}{X^N_2(0)^{p-1}}\right)\right].
\end{multline*}
Proposition~\ref{Decay}, and the fact that the sequence $(N\tau_N)$  is tight in the proof of this proposition, gives the convergence
\begin{multline*}
\lim_{N\to+\infty} \Psi_N\left(x,u\right)\\=
\E_{x}\left(\exp\left({-}a\int_0^{E_{n+1}}f\left(s{+}u,x\right)\right)\diff s{-}b\left|\ln\left(U_{n+1}^\delta\right)\right|{-}cE_{n+1}\right),
\end{multline*}
where $U_{n+1}$ is a uniform random variable on $[0,1]$, independent of $E_{n+1}$ an exponential random variable with parameter $r_1$.   
With the induction hypothesis for $n$, Lebesgue's Theorem and the strong Markov property of $(U(t))$, we obtain the convergence
\begin{multline*}
  \lim_{N\to+\infty} \E(H_N(a,b,c))=
E\left[\rule{0mm}{6mm} \exp\left({-}a I_n{-}b\left|\ln V_n \right|{-}ct^V_{n}\right)\right.\\
  \left.{\times}\exp\left({-}a\int_{t^V_{n}}^{t^V_{n+1}}f\left(s,x\right)\diff s{-}b\left|\ln\left(\frac{V_{n+1}}{V_n}\right)\right|{-}c\left(t^V_{n+1}{-}t^V_n\right)\right)\right]
\\
=\E\left(\exp\left({-}a I_{n+1}{-}b\left|\ln V_{n+1} \right|{-}ct^V_{n+1}\right)\right).
\end{multline*}
The theorem is proved.
\end{proof}
\printbibliography

\appendix

\addcontentsline{toc}{section}{Appendix}
\addtocontents{toc}{\protect\setcounter{tocdepth}{0}}
\section{Technical Proofs}\label{AppProof}
The proofs of this section, although not difficult, are detailed for the sake of completeness, and also to show that some ingredients of a scaling analysis are essentially elementary. For basic results on martingale theory and classical stochastic calculus, see~\citet{Rogers2}.

To investigate scaling properties of stochastic CRNs, the formulation in terms of stochastic differential equations (SDE) to describe the Markov process is used. We first recall briefly the technical framework.

\subsection{A SDE  Formulation of CRN  Markov Processes}\label{SDESec}
The Markov process with $Q$-matrix defined by Relation~\eqref{00Qmat} can be classically expressed as the solution of a  martingale problem.  See Theorem~(20.6) in Section~IV of~\citet{Rogers2}.

We assume that on the probability space we have   a set of independent Poisson point processes ${\cal P}_r$, $r{\in}{\cal R}$ on $\R_+^2$   with intensity measure the Lebesgue measure on $\R_+^2$. See~\citet{Kingman}. The Markov process has the same distribution as the solution  $(X(t)){=}(X_i(t))$ of the SDE,
\begin{equation}\label{SDECRN}
\diff X(t)=\sum_{r{=}(y_r^-,y_r^+){\in}{\cal R}} \left(y_r^+{-}y_r^-\right){\cal P}_{r}\left(\left(0,\kappa_r \frac{X(t{-})!}{(X(t{-}){-}y_r^-)!}\right),\diff t\right),
\end{equation}
with the notation, for $a{\ge}0$,
\[
{\cal P}_r((0,a),\diff t) =\int_{s=0}^a{\cal P}_r(\diff s,\diff t).
\]
Note that a solution of  SDE~\eqref{SDECRN} is not, a priori,  defined on the entire half-line  in the case of an explosive process, i.e. when the instants of jumps of the process converge to some finite random variable $T_\infty$. In this case, the convention is that a point $\dag$ is added to the state space and  $X(t)$ is defined as $\dag$  for all $t{\ge}T_\infty$. 

The associated filtration is $({\cal F}_t)$, where, for $t{\ge}0$,  ${\cal F}_t$ is the completed  $\sigma$-field generated by the random variables
\begin{equation}\label{SDEFilt}
{\cal F}_t\steq{def} \sigma\left({\cal P}_r(A{\times}[0,s)), r{\in}{\cal R}, s{\le}t, A{\in}{\cal B}(\R_+)\right). 
\end{equation}

{\em A comment on the use of Poisson processes on $\R_+^2$. }  If $(\lambda(t))$ is a \cadlag adapted process, a Poisson point process with intensity $(\lambda(t))$ can be represented in two ways:
\begin{enumerate}
\item Following Kurtz, see~\citet{KurtzEthier}, if ${\cal N}$ is a Poisson point process with rate $1$ on $\R_+$, the counting  measure of such a point process can be expressed as
  \[
({\cal A}(t))\steq{def}  \left({\cal N}\left(0,\int_0^t\lambda(s)\diff s\right)\right).
  \]
\item In our paper we take the representation
  \[
({\cal B}(t))\steq{def}    \left(\int_0^t{\cal P}((0,\lambda(s-))),\diff s\right),
  \]
  where ${\cal P}$ is a Poisson process on $\R_+^2$ with intensity measure $\diff s{\otimes}\diff t$. 
\end{enumerate}
It is not difficult to see that $({\cal A}(t))$ and $({\cal B}(t))$ have the same distribution.

It should be noted that the filtration $({\cal F}_t)$ we have defined is dependent of the process $(\lambda(t))$.
 A filtration for $({\cal A}(t))$ would a priori depend on  $(\lambda(t))$.  When  coupling  constructions are considered, there may be  different such processes $(\lambda(t))$, with  a common  driving Poisson process. The definition of the filtration, which is crucial for martingale, stopping time properties, is not impossible in this case, but may be quite cumbersome to define properly.

 \bigskip

 Provided that $(X(t))$ is well defined on $[0,T]$, $T{>}0$, the integration of SDE~\eqref{SDECRN} gives the relation
\begin{equation}\label{SDECRNInt}
X(t)= X(0){+}\sum_{r{\in}{\cal R}}M_r(t){+}\sum_{r{\in}{\cal R}}\kappa_r\left(y_r^+{-}y_r^-\right) \int_0^t  \frac{X(s)!}{(X(s){-}y_r^-)!}\diff s
\end{equation}
on the time interval $[0,T]$, where, for $r{\in}{\cal R}$,  $(M_r(t)){=}(M_{r,i}(t))$ is a local martingale defined by
\begin{equation}\label{MartCRN}
\left(\left(y_r^+{-}y_r^-\right)\int_0^t \left({\cal P}_{r}\left(\left(0,\kappa_r \frac{X(s{-})!}{(X(s{-}){-}y_r^-)!}\right),\diff s\right){-}\kappa_r\frac{X(s)!}{(X(s){-}y_r^-)!}\diff s\right)\right),
\end{equation}
and its previsible increasing process is given  by, for $1{\le}i,j{\le}n$, 
\begin{equation}\label{CrocCRN}
\left(\croc{M_{r,i},M_{r,j}}(t)\right)=\left(\left(y_{r,i}^+{-}y_{r,i}^-\right)\left(y_{r,j}^+{-}y_{r,j}^-\right)\kappa_r\int_0^t \frac{X(s)!}{(X(s){-}y_r^-)!}\diff s\right).
\end{equation}

\end{document}